\documentclass[a4paper, 11pt]{article}

\usepackage[utf8]{inputenc}
\usepackage[T1]{fontenc}
\usepackage[a4paper, margin=2.5cm]{geometry}
\usepackage{needspace}
\usepackage{nicefrac}

\usepackage{libertine} 
\usepackage{inconsolata} 
\usepackage{mathrsfs}
\usepackage{amsmath, amsthm, amssymb}
\usepackage{graphicx}
\usepackage{comment}
\usepackage{enumerate}
\usepackage{enumitem}
\usepackage{caption}

\usepackage{thmtools}
\usepackage{thm-restate}
\usepackage[hidelinks]{hyperref}
\usepackage{float}
\usepackage[normalem]{ulem}
\usepackage[capitalise]{cleveref}
\usepackage{mathtools}
\usepackage{nicematrix}

\newcommand{\eps}{\varepsilon}
\newcommand{\set}[1]{\{#1\}}

\declaretheorem[name=Theorem, numberwithin=section]{theorem}
\declaretheorem[name=Example, numberwithin=section]{example}
\declaretheorem[name=Lemma, sibling=theorem]{lemma}

\declaretheorem[name=Definition, sibling=theorem]{definition}

\declaretheorem[name=Claim, sibling=theorem]{claim}
\declaretheorem[
  name=Bound,
  refname={bound,bounds},
  Refname={Bound,Bounds}
]{bound}

\declaretheorem[name=Observation, style=remark, sibling=theorem]{observation}

\def\cqedsymbol{\ifmmode$\lrcorner$\else{\unskip\nobreak\hfil
\penalty50\hskip1em\null\nobreak\hfil$\lrcorner$
\parfillskip=0pt\finalhyphendemerits=0\endgraf}\fi}

\newenvironment{claimproof}[1][Proof.]{\begin{proof}[#1]}{\end{proof}}

\crefname{claim}{Claim}{Claims}

\usepackage[style=alphabetic,maxnames=999]{biblatex}
 \AtEveryBibitem{
 	\clearfield{editor}
 	\clearlist{editor}
 	\clearname{editor}
 	\clearfield{location}
 	\clearlist{location}
 	\clearname{location}
 	\clearfield{isbn}
 	\clearlist{isbn}
 	\clearname{isbn}
 	\clearfield{url}
 	\clearlist{url}
 	\clearname{url}
 	\clearfield{issn}
 	\clearlist{issn}
 	\clearname{issn}
 }

\def\C{\mathscr{C}} 
\def\CC{\C} %
\def\N{\mathbb{N}} 
\newcommand{\NN}{\mathbb{N}} 
\def\O{\mathcal O} 
\def\P{\mathcal{P}}
\def\Q{\mathcal{Q}}

\newcommand{\dist}{\operatorname{dist}}

\def\tw{\operatorname{tree-width}}

\def\mw{\operatorname{mw}}
\def\fw{\operatorname{fw}}
\def\degeneracy{\operatorname{degeneracy}}
\def\copw{\operatorname{copwidth}}
\def\smw{\operatorname{sw}}
\def\scol{\operatorname{scol}}
\def\wcol{\operatorname{wcol}}
\def\wreach{\operatorname{wreach}}
\def\sreach{\operatorname{sreach}}
\def\sep{\operatorname{sep}}
\newcommand{\Sparsify}{\mathsf{Sparsify}}
\def\frk{\mathrm{frk}}
\def\srk{\mathrm{srk}}

\let\le\leqslant
\let\ge\geqslant
\let\leq\leqslant
\let\geq\geqslant

\usepackage[textsize=small, textwidth=2cm, color=yellow]{todonotes}

\title{Flips and Merge-Width in Sparse Graphs%
\thanks{%
\begin{minipage}[b]{0.30\textwidth}
\includegraphics[scale=0.15]{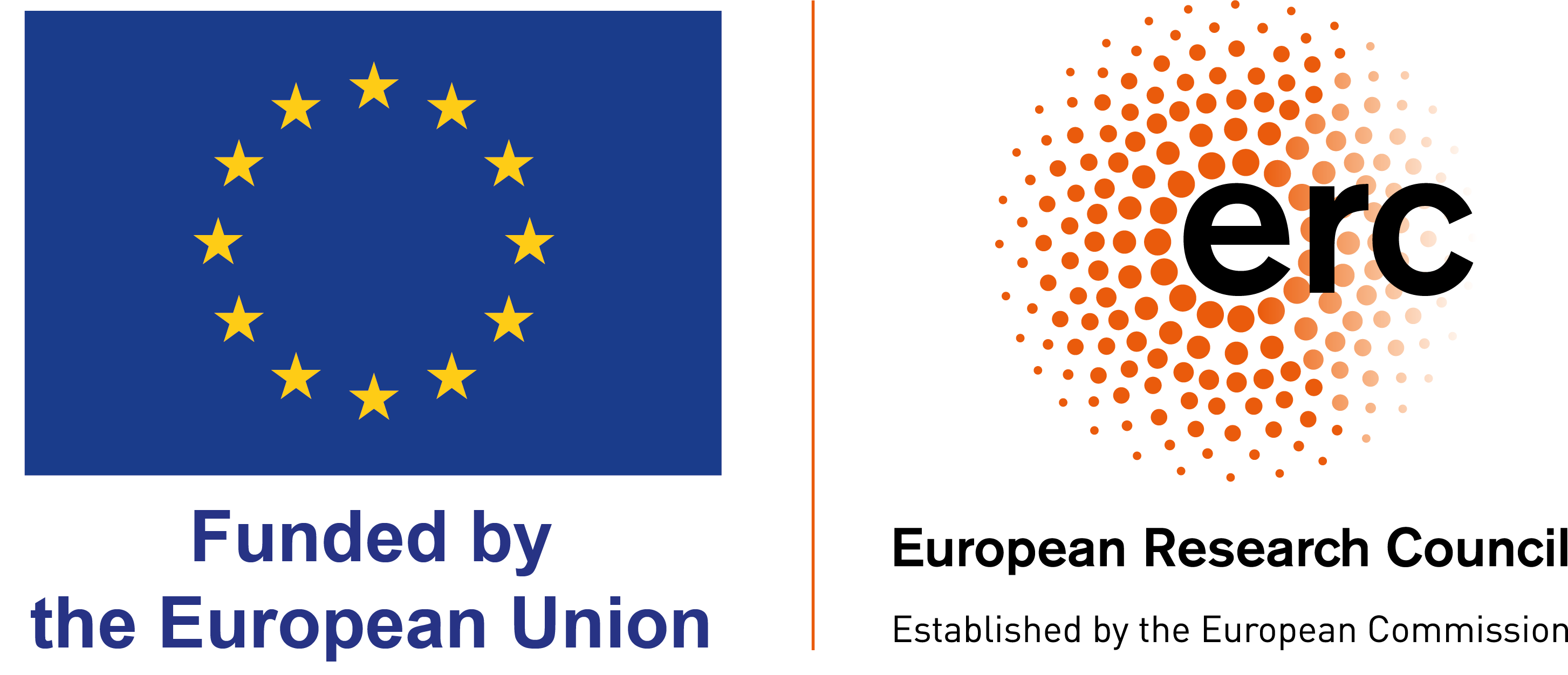}
\end{minipage}\hfill
\begin{minipage}[b]{0.66\textwidth}\footnotesize
KD, MD, NM, ST received funding from ERC grant BUKA (No.~101126229).
MD and WP were supported by the project BOBR that has received funding from ERC
under the European Union’s Horizon 2020 research and innovation programme,
grant agreement No.~948057.
\end{minipage}
}}

\date{}

\author{Karolina Drabik, Ma{\"e}l Dumas, Nikolas M{\"a}hlmann\\Wojciech Przybyszewski, Szymon Toruńczyk
\\[9pt]
Institute of Informatics\\
University of Warsaw, Poland
}

\begin{document}

\maketitle

\begin{abstract}
    
A \emph{flip} of a graph is obtained by complementing the edge relation within a set of vertices. 
Flips are typically used to separate vertices in a graph, by increasing the distances between them.
We show that in $K_{t,t}$-free graphs, every short sequence of flips can be simulated by a short sequence of vertex deletions that achieves a similar degree of separation: distances in the resulting graph are, up to a factor of three, at least as large as those obtained after the flips.

This result provides a simple and uniform explanation of an emerging pattern in structural graph theory and finite model theory: the $K_{t,t}$-free fragment of a tameness notion for dense graphs often coincides with a tameness notion for sparse graphs.
As immediate applications, we recover the following known equivalences.
In the $K_{t,t}$-free setting, the dense notions (1)~bounded shrub-depth, (2)~bounded clique-width, (3) bounded flip-width, (4)~monadic dependence, respectively, coincide with the sparse notions (1)~bounded tree-depth, (2)~bounded tree-width, (3)~bounded expansion, and (4)~no\-where dense\-ness.

Furthermore, we reprove the result by Dreier and Toruńczyk (STOC 2025) stating that $K_{t,t}$-free classes of bounded merge-width have bounded expansion.
Our proof provides explicit bounds and is direct, as it shows how to construct strong coloring orders (witnesses of bounded expansion) from merge sequences (witnesses of bounded merge-width).

Along the way, we identify 
a new family of graph parameters, dubbed \emph{separation-width}, that is sandwiched between the strong and weak coloring numbers, and is closely related to the merge-width parameters.
We provide evidence that this family of graph parameters, apparently overlooked in the literature, may play a fundamental role in the study of sparse graphs.
\end{abstract}

\section{Introduction}
Sparsity theory, initiated by Ne{\v s}et{\v r}il and Ossona de Mendez~\cite{nevsetvril2012sparsity}, 
is a prolific framework providing various notions of structure for sparse graph classes.
Graph classes are categorized by whether or not they have certain properties such as \emph{bounded expansion} and \emph{nowhere denseness}, and those properties form a hierarchy.
Any graph class which enjoys one of these properties also has various desirable combinatorial, algorithmic, and logical properties.

A more recent line of research in structural graph theory and finite model theory has begun to 
lift the framework of sparsity theory to the setting of graph classes which are possibly dense.
Here, some of the primary goals are to understand the behavior of first-order logic in graphs and to characterize the hereditary graph classes on which the first-order model checking problem is fixed-parameter tractable.
These problems are typically approached by finding ``dense extension'' of the classes properties studied in sparsity theory.

As an example predating sparsity theory, the clique-width or rank-width parameters are extensions of tree-width to the setting of graphs which might contain arbitrarily large cliques.
Similarly, shrub-depth is a dense extension of tree-depth, bounded flip-width and bounded merge-width are dense extensions of bounded expansion, 
and monadic dependence and monadic stability are dense extensions of nowhere denseness.
See \Cref{fig:universe} for a depiction of the hierarchy containing most of the class properties mentioned in this paper.

\begin{figure}[htbp]
    \centering
    \includegraphics[scale = 0.9]{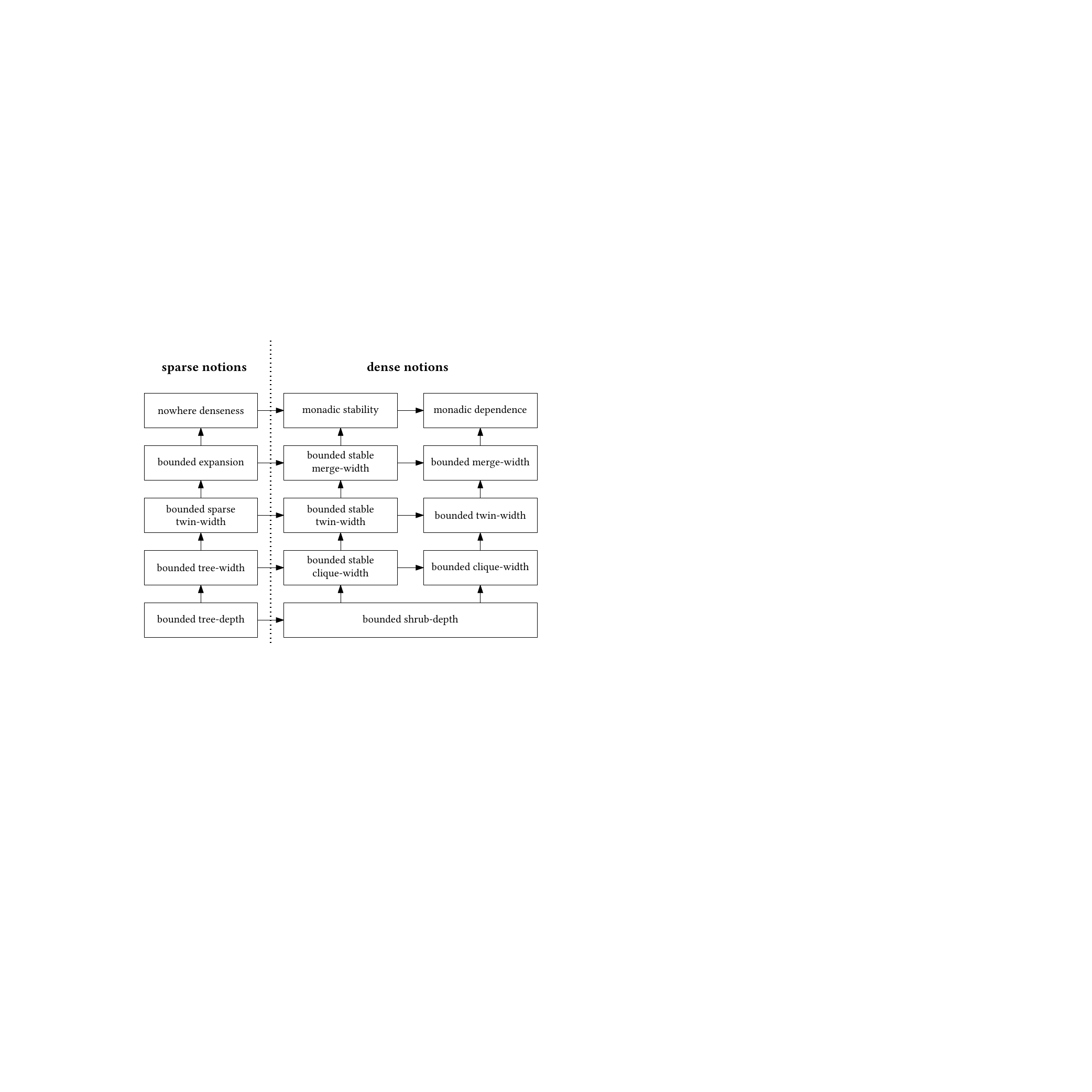}
    \caption{A hierarchy of most of the class properties mentioned in this paper (and some more). 
    Arrows represent implications between properties.
    In particular, \emph{bounded tree-depth} is the most restricted class property in this hierarchy and \emph{monadic dependence} is the most general one.
    }
    \label{fig:universe}
\end{figure}

There is no formal, general definition of what it means that one notion is a ``dense extension'' of another notion (an attempt -- in special cases -- is made in \cite{stable-tww}).
A recurring theme, however, is that the two notions coincide 
when restricted to weakly sparse graph classes.
Here, a graph class is \emph{weakly sparse} if there is some $t\in\N$ such that all graphs in the class are $K_{t,t}$-free, that is, do not contain $K_{t,t}$ as a subgraph.
Moreover, in the examples above, the sparse notion coincides exactly to its dense counterpart, restricted to weakly sparse classes, as expressed informally below:
\begin{equation}\label{eq:analog}
\textit{sparse notion}\ =\ \textit{dense notion}\ \cap\, \textit{weakly sparse}\tag{$\star$}  
\end{equation}

For example, classes of bounded tree-width are exactly the same as weakly sparse classes of bounded clique-width \cite{10.1006/inco.1995.1020,courcelle2000upper,gurski2000},
 and analogous statements hold for the other notions above.
 Furthermore, each dense notion in \cref{fig:universe}, together with the sparse notion in the same row, form an instance of \eqref{eq:analog}.
 See \cite[Sec. 7]{stable-tww} and 
 \cite[Sec. 21]{lmcs:9981} for more examples.

The systematic nature of equivalences of the form \eqref{eq:analog} suggests the presence of an underlying principle.
Nevertheless, we are not aware of an explanation that would subsume these equivalences in a unified framework.
At present, the coincidence between sparse notions and their dense extensions on weakly sparse classes remains a collection of isolated observations, proved individually for each notion.

\subsection*{Contribution 1: Sparsifying flips}
As our first contribution, we prove a simple lemma that explains many instances of the equation \eqref{eq:analog}, and helps understand the phenomenon in a unified way. 

An emerging pattern in the line of research aiming at lifting notions from sparse graphs to dense graphs, is that notions concerning sparse graphs that are defined in terms of vertex deletions or vertex isolation,
 are typically generalized to the dense setting by replacing vertex deletions with flips.
A \emph{flip} of a graph $G$ is a graph $G'$ obtained from $G$ by picking a vertex set $X\subseteq V(G)$, and complementing the adjacency of vertex pairs in $X\choose 2$.
See \Cref{fig:flips} for an example.
Note that isolating a vertex $v$ in a graph (that is, removing all edges incident to $v$), can be achieved by  applying two flips to $G$ (in any order): flipping the open neighborhood $N_G(v)$ of $v$ in $G$, and flipping the closed neighborhood $N_G[v]$ of $v$ in $G$. Similarly, isolating $k$ vertices can be achieved by performing a sequence of $2k$ flips.

\begin{figure}[htbp]
    \centering
    \includegraphics[scale = 1.1]{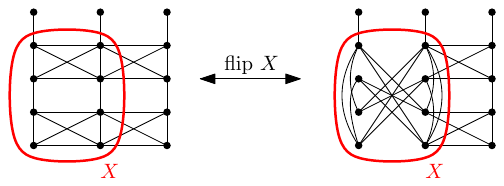}
    \caption{An example illustrating the flip operation.}
    \label{fig:flips}
\end{figure}

There are many recent examples, that match the pattern expressed in~\eqref{eq:analog}, and where the sparse notion involving vertex isolations is generalized to a dense notion involving flips 
\cite{Tor23,flip-flatness, flipper-game, flip-breakability,flip-separability, ghasemi2025weaklysparsestronglyflipflatclasses}.
We present two of them here.

\begin{example}[Cops and Robber games]\label{ex:cops-and-robber}
In the Cops and Robber game characterizing tree-width \cite{seymour-thomas-cops}, 
we may think of the Cops as temporarily isolating $k$ vertices of the graph in each round. In the dense variant of this game \cite{Tor23},
instead of isolating vertices, 
the Cops temporarily apply a sequence of $k$ flips to the considered graph, in each round. The least number $k$ of cops needed to win the game on a graph $G$ is functionally equivalent to the clique- and rank-width of~$G$.
\end{example}

\begin{example}[Flip-flatness]\label{ex:flip-flatness}
    Nowhere dense graph classes are characterized as exactly those which are \emph{uniformly quasi-wide}.
Very roughly, this notion means that for every fixed $r\in\N$, given a graph from the class and a large set of its vertices, we can find its relatively large subset which becomes $r$-scattered (the pairwise distances are greater than $r$) after a bounded number of vertices of $G$ are deleted (or isolated).
An analogous condition, called \emph{flip-flatness} characterizes 
\emph{monadically stable} classes \cite{flip-flatness},
where instead of deleting a bounded number of vertices,
we apply a bounded number of flips to $G$.
\end{example}

Implicit in \Cref{ex:cops-and-robber} and explicit in \Cref{ex:flip-flatness}, the performed flips and vertex isolations/deletions serve the purpose of separating sets of vertices, by increasing distances in the graph.
We prove a simple lemma, \Cref{lem:sparse-flips}, which allows to explain many instances of equation \eqref{eq:analog} in a unified way. Intuitively, in the setting of sparse graphs, it allows to replace flips by vertex deletions, without decreasing the distances too much.

\begin{lemma}[\Cref{thm:engine}, informally]\label{lem:sparse-flips}
Let $G$ be a $K_{t,t}$-free graph
and $G'$ be obtained by applying a sequence of $k$ flips to   $G$.
There is a set $S\subseteq V(G)$ with $|S|\le f(k,t)$, where $f$ is a fixed function,
so that
$$\dist_{G\setminus S}(u,v)\ge \tfrac{1}{3}\dist_{G'}(u,v)\qquad\text{for all $u,v\in V(G)\setminus S$.}$$
\end{lemma}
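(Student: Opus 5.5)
The plan is to describe $G'$ through the partition induced by the flipped sets, build $S$ from the large parts and a few high-degree vertices, and then transfer paths of $G\setminus S$ to $G'$.

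\textbf{Partition.} Let $X_1,\dots,X_k$ be the flipped sets. They partition $V(G)$ into at most $2^k$ classes, namely the atoms of the Boolean algebra generated by the $X_i$. For any two classes $A,B$ (possibly $A=B$), one of two things holds:
\begin{itemize}
\item the adjacency between $A$ and $B$ is the same in $G$ and in $G'$;
\item it is complemented, which happens when the number of indices $i$ with $A\cup B\subseteq X_i$ is odd.
\end{itemize}
Call a pair $(A,B)$ \emph{flipped} in the second case.

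\textbf{Construction of $S$.} Fix a flipped pair $(A,B)$. Since $G$ is $K_{t,t}$-free, if both $|A|\ge t$ and $|B|\ge t$ (with suitable disjointness when $A=B$), then $G'$ is dense between them: every $t$-subset of $A$ has a common $G'$-neighbour among any $t$ vertices of $B$, up to at most $t-1$ exceptions. The idea is to put into $S$ the following vertices:
\begin{itemize}
\item all classes of size less than some bound $N=N(k,t)$;
\item for each flipped pair of large classes $(A,B)$, the few vertices of $A$ that have many $G$-neighbours in $B$.
\end{itemize}
In $K_{t,t}$-free graphs there are fewer than $t$ vertices in $A$ that each have at least $N$ neighbours in a set of size $N$. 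I would refine this with a Kővári–Sós–Turán-style argument applied to a chosen $N$-subset of $B$. I would then choose $|S|\le f(k,t)=2^{2k}\cdot O(N)$.

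\textbf{Distance transfer.} Take a shortest path $P$ in $G\setminus S$ from $u$ to $v$. I would show that every edge $xy$ of $P$ can be replaced by a $G'$-path of length at most $3$, which gives $\dist_{G'}(u,v)\le 3\dist_{G\setminus S}(u,v)$. If the pair of classes of $x$ and $y$ is not flipped, then $xy\in E(G')$ and there is nothing to do. Otherwise $x\in A$ and $y\in B$ with $A,B$ large, and $xy$ is a non-edge of $G'$. Because $x,y\notin S$, both have few $G$-neighbours in a fixed large reference set $R_B\subseteq B$, respectively $R_A\subseteq A$. So $x$ is $G'$-adjacent to most of $R_B$, and $y$ is $G'$-adjacent to most of $R_A$. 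It remains to find $b\in R_B$ and $a\in R_A$ with $ab\in E(G')$, which yields the path $x\,b\,a\,y$ of length $3$. Such a pair exists because $ab\in E(G')$ iff $ab\notin E(G)$, and $K_{t,t}$-freeness forbids the large set of $G'$-neighbours of $x$ in $R_B$ from being completely $G$-adjacent to the large set of $G'$-neighbours of $y$ in $R_A$. This step requires $N>2t$ plus the degree bounds.

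\textbf{Main obstacle.} The delicate point is that this argument proves the inequality in the wrong direction: it bounds $\dist_{G'}$ by $3\dist_{G\setminus S}$, whereas the lemma claims $\dist_{G\setminus S}\ge\frac13\dist_{G'}$. These are the same statement, so the direction is in fact correct. The real difficulty lies elsewhere. First, the reference sets $R_A$ and $R_B$ and the intermediate vertices $a,b$ may lie in $S$, which is harmless because the path is in $G'$, where nothing is deleted. Second, and more seriously, $S$ must capture all high-degree vertices without depending on $|G|$. For this I would first try to define $S$ as the vertices having at least $t$ neighbours in the fixed reference sets $R_A$ of size $N$. This gives fewer than $\binom{N}{t}t$ vertices per class, which is bounded.
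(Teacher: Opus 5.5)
Your proof is correct, and its skeleton matches the paper's. You view $G'$ as a $\P$-flip for the partition $\P$ into at most $2^k$ atoms, and delete the small parts together with a few ``dangerous'' vertices. Each surviving $G$-edge $xy$ between flipped parts $A\ni x$, $B\ni y$ is then replaced by a $G'$-path $x\,b\,a\,y$. Here $b\in B$ and $a\in A$ are $G$-non-neighbours of $x$ and $y$ respectively, and $K_{t,t}$-freeness forces $ab\in E(G')$.

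The difference is in the deletion rule and the counting lemma behind it. You delete every vertex with at least $t$ neighbours in a chosen $N$-subset $R_C$ of some large part, with $N>2t$. You then count these vertices by a K\H{o}v\'ari--S\'os--Tur\'an argument, which gives $|S|\le|\P|\bigl(N+(t-1)\binom{N}{t}\bigr)$. This is exponential in $t$, so your earlier estimate $2^{2k}\cdot\O(N)$ does not match the rule you finally adopt.

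The paper observes that the path argument only needs $x$ and $y$ each to have at least $t$ non-neighbours in the other's part. So it deletes only two kinds of vertices:
\begin{itemize}
\item the vertices in parts of size $<t^2$;
\item the vertices with fewer than $t$ non-neighbours in some part of size $\ge t^2$ (the $t$-complete vertices).
\end{itemize}
A one-line common-neighbourhood count (\Cref{lem:almost_complete}) shows that fewer than $t$ vertices are $t$-complete to a given big part. This yields $|S|<|\P|\,t^2$, with no reference sets to choose. The paper relies on this later: the polynomial count gives \Cref{lem:sparsify_refinement}, and the fact that $t$-completeness to a part passes to its subparts is used in \Cref{lem:bounded_smw}.

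One case you should make explicit is $A=B$. Then $N_{G'}(x)\cap R_A$ and $N_{G'}(y)\cap R_A$ need not be disjoint, so your $K_{t,t}$ step does not apply as stated. However, each of these sets has size at least $N-t$, so with $N>2t$ they must intersect. This gives $\dist_{G'}(x,y)\le 2$ directly, exactly as in the paper's case $P'\cap Q'\neq\emptyset$.
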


In the above, $G\setminus S$ denotes the graph $G$ with the vertices from $S$ removed.
Applied to the notions in \Cref{ex:flip-flatness}, \Cref{lem:sparse-flips} immediately yields that in weakly sparse graph classes, uniform quasi-wideness (characterizing nowhere denseness) and flip-flatness (characterizing monadic stability) coincide.
Similarly, we may readily derive the following known instances of equation \eqref{eq:analog} in a unified way,
with immediate proofs which amount to applying \Cref{lem:sparse-flips} to known characterizations of the appropriate dense notions in terms of flips.

\begin{theorem}\label{thm:sparse-classes}
For every weakly sparse class $\CC$, the following holds:
\begin{enumerate}
%

    \item\label{it:sd} If $\CC$ has bounded shrub-depth,
    then $\CC$ has bounded tree-depth. 
    \hfill\cite[Lem.\ 2.12]{sbe}
    \item\label{it:cw} If $\CC$ has bounded clique-width,
    then $\CC$ has bounded tree-width. 
    \hfill\cite[Thm.\ 5.9]{courcelle2000upper}
    \item\label{it:mon-stab} If $\CC$ is monadically stable,
    then $\CC$ is nowhere dense. 
    \hfill\cite[Thm.\ 6]{dvorak2018induced}
    \item\label{it:mon-dep} If $\CC$ is monadically dependent, 
    then $\CC$ is nowhere dense. 
    \hfill\cite[Thm.\ 6]{dvorak2018induced}
    \item    \label{it:fw} If $\CC$ has bounded flip-width,
    then $\CC$  has bounded expansion. 
    \hfill\cite[Thm. 6.3]{Tor23}
    \item\label{it:stronglyflip} If $\CC$ is strongly flip-flat,
    then $\CC$ is uniformly almost-wide. 
    \hfill\cite[Main Thm.]{ghasemi2025weaklysparsestronglyflipflatclasses}
\end{enumerate}
\end{theorem}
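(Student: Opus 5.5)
The plan is to treat each item uniformly: take a known characterization of the dense notion in terms of flips, apply \Cref{lem:sparse-flips} to turn every flip sequence used there into a bounded set of vertex deletions, and recognize the result as a known characterization of the sparse notion in terms of vertex deletions. Throughout we fix $t$ with $\CC$ being $K_{t,t}$-free, so \Cref{lem:sparse-flips} applies to every $G\in\CC$ with a bound $f(k,t)$ depending only on the number $k$ of flips. The factor $\tfrac13$ in distances only rescales the radius parameters, and every sparse characterization we use is robust under such rescaling.

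The items go as follows.

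\begin{itemize}
\item \textbf{Item~\ref{it:cw}.} Use the flipper-game characterization of clique-width from \Cref{ex:cops-and-robber}. Assume the Cops win with $k$ flips per round, where separation means the Robber is confined to a component of the flipped graph. Replace each round's flip sequence by the set $S$ given by \Cref{lem:sparse-flips}, of size $f(k,t)$. Vertices in different components of $G'$ have infinite distance there, so they have infinite distance in $G\setminus S$ as well. Hence the components of $G\setminus S$ refine those of $G'$. The Cops therefore win the Seymour--Thomas game with $f(k,t)$ cops, which bounds the tree-width.

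\item \textbf{Item~\ref{it:sd}.} Argue the same way with the bounded-round variant of the game that characterizes shrub-depth (flips in each of a bounded number of rounds). This yields the bounded-round cops game characterizing tree-depth.

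\item \textbf{Items~\ref{it:mon-stab} and~\ref{it:stronglyflip}.} As already noted after \Cref{lem:sparse-flips}, flip-flatness with radius $3r$ yields uniform quasi-wideness with radius $r$. Strong flip-flatness likewise yields uniform almost-wideness.

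\item \textbf{Item~\ref{it:fw}.} The radius-$r$ flip-width game becomes the radius-$r'$ cops-and-robber game, with $r'$ about $r/3$ and $f(k,t)$ cops. Bounding the latter for all radii characterizes bounded expansion.

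\item \textbf{Item~\ref{it:mon-dep}.} Use flip-breakability, the characterization of monadic dependence. Applying \Cref{lem:sparse-flips} converts it into the deletion version, uniform quasi-wideness in its ``two large mutually far sets'' form. That form is known to characterize nowhere denseness.
\end{itemize}

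The main obstacle is the game-based items~\ref{it:sd},~\ref{it:cw} and~\ref{it:fw}. There the flips are announced dynamically, and the Robber moves in the previous graph. We must check that the deletion strategy can be simulated round by round. Specifically, a Robber path in $G\setminus(S_{\mathrm{old}}\cap S_{\mathrm{new}})$ must be matched by a Robber move in the flip game, within the allowed radius after rescaling. I would first handle this by letting the cop set be $S_{\mathrm{old}}\cup S_{\mathrm{new}}$ during transitions, which doubles the count but keeps it bounded.

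For item~\ref{it:mon-dep} the obstacle is that flip-breakability allows the flips to depend on the chosen sets. This is fine, since \Cref{lem:sparse-flips} is applied pointwise to whatever flips are chosen.
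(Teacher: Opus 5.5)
Items (3)--(6) follow the paper. Items (3), (4) and (6) are the static transfers (C) and (A) of \Cref{thm:sparse-flatness}. In (4) what you actually obtain is $r$-deletion-breakability, which itself characterizes nowhere denseness; it is not uniform quasi-wideness. Item (5) is the paper's transfer of the radius-$3r$ flipper game to the radius-$r$ cop game, including the factor $2$ from keeping $S_{\mathrm{old}}\cup S_{\mathrm{new}}$.

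For (2) you take a different but valid route: through the radius-$\infty$ flipper game and the Seymour--Thomas game, instead of through $\infty$-flip-breakability or $\infty$-flip-separability. Your transition trick handles the dynamics, plus one extra round to catch a runner all of whose neighbours lie in $S_i$. The paper's static characterizations avoid strategy transfer altogether.

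The gap is item (1). You invoke ``the bounded-round variant of the game that characterizes shrub-depth'' without saying which game. The established one is bounded $\infty$-flipper-rank, i.e.\ SC-depth, and for it ``argue the same way'' fails as stated. From the second round on, the flipper flips the current arena $H[C]$. That is an already flipped graph, not a $K_{t,t}$-free one, so \Cref{lem:sparse-flips} cannot be applied to it.

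The missing idea is the re-basing step in \Cref{lem:frk}:
\begin{itemize}
\item After replacing $H$ by $G\setminus S$, the new arena $B=B^r_{G\setminus S}(v)$ lies inside $B^{3r}_H(v)$.
\item $G[B]$ is a $k$-flip of $H[B]$.
\item By hereditariness, symmetry and transitivity of flips, the flipper still wins on the $K_{t,t}$-free graph $G[B]$ within the same number of rounds, with budget $k^2$.
\item One then inducts on the rank, ending with budget $k^{2^\ell}t^2$.
\end{itemize}

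If you instead mean Toru\'nczyk's game (flips always of $G$) with a bound on the number of rounds, your transfer does go through. But then you owe two proofs, neither of which you cite or justify:
\begin{itemize}
\item that this game characterizes bounded shrub-depth (e.g.\ via composite SC-depth flips in one direction, and conversion to a width-$k^2$ flipper-rank strategy in the other);
\item that its bounded-round cop version characterizes bounded tree-depth (e.g.\ a robber survives many rounds on a long path).
\end{itemize}

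The simplest repair is to use $\infty$-flip-flatness. Your item-(3) argument with $r=\infty$ gives $\infty$-deletion-flatness, which is bounded tree-depth.
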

See \Cref{sec:flatness} for the definitions of the notions considered in \Cref{thm:sparse-classes}, for its proof, and  
for more examples of this principle.
In the same way, we may also derive results which have not yet been conceived in the literature. 

\paragraph*{Further work related to Contribution 1}
    \Cref{lem:sparse-flips} is inspired by a proof of Mählmann~\cite[Lem.\ 13.10]{maehlmann-thesis}, that shows that for weakly sparse classes, flip-flatness implies uniform quasi-wideness, by directly analyzing the structure of the performed flips.
    This statement is equivalent to \Cref{thm:sparse-classes} \eqref{it:mon-stab}, and hence subsumed by \Cref{lem:sparse-flips}.

    Bonnet et al. \cite[Thm. 3.5]{incremental} (see also \cite[Lem. 9.8]{Tor23}), and Mählmann and Siebertz~\cite{MahlmannSiebertz26E}
    state lemmas similar in spirit to \Cref{lem:sparse-flips}. They allow to convert flips to \emph{definable flips} in graphs of bounded VC-dimension and to \emph{subflips} in co-matching-free graphs, respectively. In \Cref{lem:sparse-flips}, the assumption is stronger (weakly sparse classes have bounded VC-dimension and are co-matching-free) and the conclusion is stronger (isolating vertices in $S$ is a special case of both an $S$-definable flip and a subflip).
    The result in~\cite{MahlmannSiebertz26E} was directly inspired by \Cref{lem:sparse-flips}.

\subsection*{Contribution 2: Merge-width in sparse graphs}

Our second contribution is a detailed study of the following instance of equation~\eqref{eq:analog}.

\begin{restatable}[{\cite[Cor. 1.8]{DT25}}]{theorem}{thmMW}
\label{thm:mw-sparsify}
    A graph class has bounded expansion if and only if it is weakly sparse and has bounded merge-width.
\end{restatable} 

The \emph{radius-$r$ merge-width} graph parameters (denoted $\mw_r$) were recently introduced by Dreier and Toruńczyk with the aim of generalizing results concerning classes of bounded expansion to the dense setting~\cite{DT25}.
A graph class $\CC$ has bounded merge-width if $\mw_r(\CC) < \infty$ for every $r\in\N$.

The original proof of \Cref{thm:mw-sparsify} given in~\cite{DT25} uses the fact that classes of bounded merge-width have bounded flip-width, and then concludes by applying \Cref{thm:sparse-classes} \eqref{it:fw} above. It does not explicitly construct a witness for having bounded expansion -- instead it proves the inexistence of dense \emph{shallow minors}, an obstruction to having bounded expansion.
In this paper, we give a direct proof of \Cref{thm:mw-sparsify}, that yields explicit bounds and is constructive in the following sense: given a $K_{t,t}$-free graph and a witness that $G$ has small merge-width, we directly construct an ordering on the vertex set of $G$ which witnesses that $G$ has small \emph{strong $r$-coloring numbers} (denoted $\scol_r$).

\begin{lemma}\label{lem:bounded_scol}
    Fix $d,t,r\in\N$ with $r\ge 1$.
    Let $G$ be a $K_{t,t}$-free graph with $\mw_{3r+1}(G) \leq d$.  Then 
    \[\scol_r(G) \leq \O(d^2t^3).\]
\end{lemma}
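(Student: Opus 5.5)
We would build the strong coloring order directly from a merge sequence. Recall that a merge sequence is a sequence of partitions $\P_0,\dots,\P_m$ of $V(G)$, starting from singletons and ending with a single part, with each step merging parts. Alongside it comes an increasing sequence of sets $R_0\subseteq\dots\subseteq R_m$ of \emph{resolved} pairs. For every $i$, each pair of parts of $\P_i$ is either not resolved at all, or completely resolved, in which case it is homogeneous (complete or anti-complete) in $G$. The value $\mw_{3r+1}(G)\le d$ says that for every $i$ and every vertex $v$, the ball of radius $3r+1$ around $v$ in the graph $(V(G),R_i\cap E(G))$ meets at most $d$ parts of $\P_i$.

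For a vertex $v$, let $\tau(v)$ be the least $i$ such that every edge of $G$ incident to $v$ lies in $R_i$. We order $V(G)$ so that $u<v$ whenever $\tau(u)>\tau(v)$, breaking ties arbitrarily. Thus vertices whose edges are resolved late come first.

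Now fix $v$, set $i=\tau(v)$, and let $u<v$ be strongly $r$-reachable from $v$ via a path $v=x_0,\dots,x_\ell=u$ with $\ell\le r$ and all $x_j>v$ for $0<j<\ell$. Then $\tau(x_j)\le\tau(v)=i$ for all $j<\ell$. So every edge of the path is incident to a vertex all of whose edges are resolved at time $i$, and the whole path lies in $(V(G),R_i\cap E(G))$. Hence all strongly reachable $u$ lie in at most $d$ parts of $\P_i$, and likewise their predecessors $x_{\ell-1}$ lie in at most $d$ parts. It remains to show that each such part contributes only $\O(d\,t^3)$ (or similar) candidates $u$. This gives $\O(d^2t^3)$ overall.

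\textbf{Bounding one part.} Fix a part $P\ni u$ and a part $Q\ni x_{\ell-1}$. The pair $(P,Q)$ is resolved and contains an edge, so it is completely adjacent. If $|P|,|Q|\ge t$, this yields a $K_{t,t}$, which is impossible. So either $P$ is small, contributing fewer than $t$ vertices, or $Q$ has fewer than $t$ vertices, each adjacent to all of $P$. The second case is the main obstacle: $P$ may be huge, and we must bound how many $u\in P$ satisfy $\tau(u)\ge i$, i.e.\ still have unresolved edges, or are otherwise reachable.

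Here we would use the spare radius ($3r+1$ rather than $r$) and $K_{t,t}$-freeness once more. Two vertices of $P$ sharing a common neighbour in $Q$ are at distance $2$ in the resolved graph. So a part $P'$ containing an unresolved neighbour $w$ of some $u\in P$ is not yet resolved against $P$. We then look at a later time $i'$ at which such a pair of parts becomes resolved, and use the radius bound around $v$ (now of radius about $3r$) to show that the vertices in question fall into at most $d$ parts at time $i'$. Combining this with the $K_{t,t}$-freeness dichotomy applied to $Q$, $P$ and $P'$ (each step costing a factor of $t$) should give at most $\O(d\,t^3)$ candidates per part. Our first attempt would be exactly this, with $\tau$ possibly refined to record the time at which the part of $v$ first reaches size $t$. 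We expect the details of this counting step to require the most care.
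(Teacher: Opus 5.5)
Your first reduction is correct: with $\tau$ as you define it, every $u$ strongly $r$-reachable from $v$ lies in the $r$-ball of $v$ in the resolved-edge graph at time $\tau(v)$, hence in at most $d$ parts. The difficulty is the per-part count, and it rests on a false picture of merge sequences. Two current parts are not ``either unresolved or completely resolved, and then homogeneous.'' Resolutions happen before merges. So if $aq$ is resolved positively and $a'q$ negatively, and then $a,a'$ are merged, the pair $(\{a,a'\},\{q\})$ is fully resolved but mixed.

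What is true is the converse: the \emph{unresolved} pairs between two current parts are homogeneous, since they are all resolved later in one step. This is exactly why there is a $\P_i$-flip $G_i$ of $G$ all of whose edges are resolved, which is the restrained flip sequence the paper uses. Knowing that the single pair $x_{\ell-1}u$ is a resolved edge therefore tells you nothing about $P\times Q$. Your dichotomy ``$|P|<t$ or $|Q|<t$'' is unfounded.

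More fundamentally, the order by $\tau$ fails however the counting is done. Build the star $K_{1,n}$ with centre $c$: merge all leaves into a part $L$, resolve $(L,\{c\})$ positively and $(L,L)$ negatively, then merge. The graph is $K_{2,2}$-free and the width is $2$ at every radius. But all edges are resolved in one step, so $\tau$ is constant. Your arbitrary tie-breaking may then put $c$ last, giving $|\sreach_1^{\le}(c)|=n+1$.

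Refining by the time at which a part reaches size $t$ does not help. In one direction the star example already defeats it. For the other direction, first resolve $t$ isolated vertices negatively against the leaves; this costs nothing, since only resolved edges count. Then merge them with $c$ before merging the leaves.

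The missing idea is that vertices which are $t$-complete to a $t$-big part (high-degree vertices like $c$) must be placed early, together with vertices lying in $t$-small parts. The paper does exactly this. It orders vertices by the first index $i$, going from $\P_1$ towards the finest partition, at which they enter $\Sparsify(G,\P_i,t)$. It then uses \Cref{lem:sparsify} to turn every edge of $G\setminus S_i$ into a path of length at most $3$ in $G_i$. This places all witnesses within distance $3r-3$ of $v$ in $G_i$, which is where the radius $3r+1$ comes from.

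The counting then applies $K_{t,t}$-freeness to pairs of parts that must be \emph{flipped} in $G_i$, not to resolved pairs. It adds a ``latest reason'' argument for the vertices of $S_{{\le}i}$ that are $t$-complete to big parts. The paper in fact bounds $\smw_r$, which yields the $\scol_r$ bound via \Cref{lem:scol_smw}.
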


As a graph class $\CC$ has bounded expansion if $\scol_r(\CC) < \infty$ for every $r\in\N$, this reproves the harder implication in \Cref{thm:mw-sparsify}.
The previously discussed instances of \eqref{eq:analog} were immediate applications of \Cref{lem:sparse-flips}. Our proof of \Cref{lem:bounded_scol} uses \Cref{lem:sparse-flips} in a more sophisticated way and is the main technical contribution of this paper. 

Specifically, if a graph $G$ has radius-$r$ merge-width $k$,
then this is witnessed by a refining sequence of partitions 
$\P_1\succ \P_2\succ \ldots\succ\P_n$, where $n=|V(G)|$ and $\P_i$ has $i$ parts for $i=1,\ldots,n$, 
as well as a sequence of graphs $G_1,G_2,\ldots,G_n$,
where $G_i$ is a \emph{$\P_i$-flip of $G$} -- it is obtained from $G$ by possibly inverting the adjacency between some pairs $A,B$ of parts of $\P_i$.
Moreover, every radius $r$-ball in~$G_i$ intersects at most $k$ parts of the partition $\P_i$ (see \Cref{sec:mw-smw} for precise definition).
The idea behind our proof of \Cref{lem:bounded_scol}
is to apply \cref{lem:sparse-flips} to each of the graphs $G_i$,
obtaining a sequence of sets $S_1\subseteq\ldots\subseteq S_n\subseteq V(G)$,
such that for all $r'\in\N$,
\begin{equation}\label{eq:bb}
 B^{r'}_{G\setminus S_i}(v)\subseteq B^{3r'}_{G_i}(v)\qquad\text{for all $i\in[n]$ and $v\in V(G)\setminus S_i$.}   
\end{equation}
Now, we pick any total order on $V(G)$ which first places 
all vertices of $S_1$ (in any order), followed by all vertices of $S_2\setminus S_1$, etc. We then show, using \eqref{eq:bb},
that this total order 
witnesses that $G$ has a small strong coloring number for radius $r'=\lfloor (r-1)/3\rfloor$.

We believe that this proof, besides providing explicit bounds stated in \cref{lem:bounded_scol} and being more constructive than the previous proof of \cref{thm:mw-sparsify}, 
 sheds a new light on the relationship between merge-width and the fundamental strong coloring numbers studied in sparsity theory.
Furthermore, it establishes a new paradigm for transforming 
merge sequences in a step-by-step fashion, which is  explored further in \cite{def-mw}.

\paragraph*{Further work related to Contribution 2}
Let us mention two previous results that provide direct and explicit bounds for known instances of \eqref{eq:analog}, in the same spirit as our \Cref{lem:bounded_scol}.

Gurski and Wanke~\cite{gurski2000} give a direct proof showing that for $K_{t,t}$-free graphs, tree-width is bounded in terms of clique-width. This reproves a result of Courcelle and Olariu \cite{courcelle2000upper}, by providing an explicit bound.

Bonnet, Geniet, Kim, Thomass\'e, and Watrigant \cite{tww2} revisit \emph{twin-width} in the setting of $K_{t,t}$-free graphs, and prove in particular that 
weakly sparse classes of bounded twin-width have bounded expansion, although without providing explicit bounds.
Dreier et al.~\cite{DREIER2022112746} give a direct proof that $K_{t,t}$-free graphs of bounded twin-width have bounded (weak and strong) coloring  numbers, and provide an explicit bound.

\subsection*{Contribution 3: Separation-width}

As a last, conceptual contribution, 
we define a new family of graph parameters tailored to sparse graph classes, dubbed \emph{separation-width}. We argue that this notion occupies a sweet spot between the previously studied \emph{strong} and \emph{weak coloring numbers}, combining the small bounds characteristic of the former with much of the proof flexibility of the latter. 
The definition is as follows.

Fix $r\in \N\cup\set{\infty}$. For a graph $G$, a set $S$
of its vertices, and vertex $v\in V(G)\setminus S$, 
 the \emph{$r$-separator of $v$ in $S$}, denoted by $\sep_r(v/S)$, is the set of vertices in $S$ that can be reached from $v$ by a path of length at most~$r$, whose inner vertices are disjoint from $S$.
 (This is called the \emph{$r$-projection of $v$ onto $S$} in  \cite{kernelization}).
    The  \emph{radius-$r$ separation-width} of a graph $G$, denoted by $\smw_r(G)$,
is the least number $k$ such that there is an enumeration $v_1,\ldots,v_n$ of $V(G)$ such that $\left|\sep_r(v_j/\{v_1,\ldots,v_i\})\right|\le k$ for all $i<j\le n$.

We collect several results, some observed here and some drawn from the literature, which support viewing separation-width as a fundamental notion. 
Here, $\scol_r$ and $\wcol_r$ denote the strong and weak $r$-coloring numbers, respectively (see \Cref{sec:sw}).

\begin{restatable}{lemma}{lemScolSmw}\label{lem:scol_smw}
    For every graph $G$ and $r\in\N\cup\set{\infty}$, $r\ge 1$, we 
     have 
     \begin{align}
     \scol_r(G)&\le \smw_r(G)+1\le \wcol_r(G),\label{eq:sandwich1}\\
     \scol_r(G)&\le \smw_r(G)+1\le \scol_{2r-1}(G).\label{eq:sandwich2}
     \end{align}
\end{restatable}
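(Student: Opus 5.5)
The plan is to treat all three inequalities with the same device. Fix an enumeration $v_1,\ldots,v_n$, regard it as a linear order ($v_a<v_b$ iff $a<b$), and write $V_i=\{v_1,\ldots,v_i\}$. I use the standard definitions. A vertex $u\le v$ is \emph{strongly $r$-reachable} from $v$ if there is a path from $v$ to $u$ of length at most $r$ all of whose inner vertices are $>v$. It is \emph{weakly $r$-reachable} if $u$ is the minimum of such a path. In both cases $v$ counts as reachable from itself, and $\scol_r,\wcol_r$ are the minima over orders of the largest such set. This convention accounts for the ``$+1$'' in the statement. Each inequality then follows by comparing $\sep_r(v_j/V_i)$ with reachability sets with respect to a suitably chosen order.

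For $\scol_r(G)\le\smw_r(G)+1$, I take an enumeration witnessing $\smw_r(G)$ and use the same order. With $i=j-1$, $\sep_r(v_j/V_{j-1})$ consists of the vertices $u<v_j$ reachable by a path of length $\le r$ whose inner vertices lie outside $V_{j-1}$. Such inner vertices are $\ge v_j$, and being distinct from $v_j$ they are $>v_j$. So $\sep_r(v_j/V_{j-1})$ is exactly the strong $r$-reachability set of $v_j$ minus $v_j$ itself, which gives the bound.

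For $\smw_r(G)+1\le\wcol_r(G)$, I take an order witnessing $\wcol_r$ and fix $i<j$ and $w\in\sep_r(v_j/V_i)$. The witnessing path from $v_j$ to $w$ has all inner vertices outside $V_i$, so they are $>v_i\ge w$. Also $v_j>v_i\ge w$. Hence $w$ is the minimum of the path and lies in the weak $r$-reachability set of $v_j$. Since $w\ne v_j$, adding $v_j$ gives $|\sep_r(v_j/V_i)|+1\le\wcol_r(G)$.

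The main step is $\smw_r(G)+1\le\scol_{2r-1}(G)$. Here a single vertex $v_j$ will not work, so I choose a different ``root'' vertex. Fix an order witnessing $\scol_{2r-1}$, indices $i<j$, and for each $w\in\sep_r(v_j/V_i)$ a witnessing path $Q_w$. Let $U$ be the set of vertices of all $Q_w$ other than their endpoints in $V_i$; it contains $v_j$, and $U\cap V_i=\emptyset$. Let $m=\min U$, so $m>v_i\ge w$ for every such $w$. If $m$ lies on $Q_{w'}$, then $m$ is joined to $v_j$ along $Q_{w'}$ by at most $r-1$ edges, because $m$ is not the endpoint $w'$. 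Concatenating with $Q_w$ gives a walk from $m$ to $w$ of length $\le 2r-1$ whose non-final vertices all lie in $U$. Shortcutting it to a path, the inner vertices are in $U\setminus\{m\}$, hence $>m$, while $w<m$. So $w$ is strongly $(2r-1)$-reachable from $m$. Together with $m$ itself this yields $|\sep_r(v_j/V_i)|+1\le\scol_{2r-1}(G)$. The case $r=\infty$ goes through verbatim.
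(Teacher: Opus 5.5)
Your proof is correct and follows essentially the same route as the paper. It uses the same inclusions $\sreach_r(v)\subseteq\sep_r(v/S_{{<}v})\cup\{v\}$ and $\sep_r(v/S)\cup\{v\}\subseteq\wreach_r(v)$, and for the last inequality the same choice of the minimum non-terminal vertex over all witnessing paths as the new root, followed by concatenation and shortcutting. The only gap is the trivial case $\sep_r(v_j/V_i)=\emptyset$: there your set $U$ is empty and $\min U$ is undefined, but the bound holds immediately since $\scol_{2r-1}(G)\ge 1$; the paper glosses over this case in the same way.
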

The following statements are immediate corollaries.

\begin{restatable}{corollary}{corSMWDEGTW}\label{cor:smw-tw}
    For every graph $G$, we have
    \begin{equation}
        \smw_1(G)=\degeneracy(G)\quad\text{and}\quad
    \smw_\infty(G)=\tw(G).\label{eq:smw-tw}
    \end{equation}
\end{restatable}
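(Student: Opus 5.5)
We derive both identities from \Cref{lem:scol_smw} together with the classical facts $\scol_1(G)=\wcol_1(G)=\degeneracy(G)+1$ and $\scol_\infty(G)=\tw(G)+1$. The second fact needs the convention, standard in sparsity theory and the one we adopt, that $\scol_\infty$ is the minimum over orders of the maximum number of vertices smaller than $v$ reachable from $v$ by paths whose inner vertices are all larger than $v$.

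For the first identity, take $r=1$ in \eqref{eq:sandwich1}. This gives $\scol_1(G)\le \smw_1(G)+1\le \wcol_1(G)$. For $r=1$ no path has inner vertices, so the strong and weak $1$-reachability sets of $v$ both equal its set of neighbours smaller than $v$. Hence $\scol_1(G)=\wcol_1(G)$, and this common value is the least possible maximum back-degree plus one, namely $\degeneracy(G)+1$. The sandwich collapses to $\smw_1(G)=\degeneracy(G)$. As a sanity check, $\sep_1(v_j/\{v_1,\dots,v_i\})$ is the set of neighbours of $v_j$ among $v_1,\dots,v_i$. Its maximum over $i<j$ is attained at $i=j-1$, so it equals the back-degree, which confirms the identity directly.

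For the second identity, take $r=\infty$ in \eqref{eq:sandwich2}. Since $2r-1=\infty$, this reads $\scol_\infty(G)\le \smw_\infty(G)+1\le \scol_\infty(G)$, so $\smw_\infty(G)=\scol_\infty(G)-1$. It remains to check $\scol_\infty(G)=\tw(G)+1$, the known characterization of tree-width by elimination orderings.

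The one step that needs care is this last equality. Given an order, eliminate vertices from the largest down to the smallest. The strong $\infty$-reachable set of $v$ is exactly the set of vertices smaller than $v$ that are adjacent to $v$ in the fill-in graph: an edge $uv$ with $u<v$ is created when eliminating some $w>v$ precisely if $u$ and $v$ are joined by a path whose inner vertices all exceed $\max(u,v)$. The maximum back-degree in the fill-in graph is the width of the corresponding chordal completion, and minimizing it over orders gives $\tw(G)$. Hence the maximum size of a strong reachability set, minimized over orders, is $\tw(G)$, and with the $+1$ normalization $\scol_\infty(G)=\tw(G)+1$.

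If the paper uses a different normalization of $\scol_\infty$, the same fill-in argument applied directly to $\sep_\infty$ gives $\smw_\infty(G)=\tw(G)$ anyway. For a fixed order, take $i=j-1$: then $\sep_\infty(v_j/\{v_1,\dots,v_{j-1}\})$ is the set of vertices before $v_j$ reachable from $v_j$ through vertices after it, which is exactly the strong reachability set above.
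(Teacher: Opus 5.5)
Your proof is correct and follows the paper's route exactly: specialize \eqref{eq:sandwich1} to $r=1$ and \eqref{eq:sandwich2} to $r=\infty$, then use $\scol_1(G)=\wcol_1(G)=\degeneracy(G)+1$ and $\scol_\infty(G)=\tw(G)+1$. The paper cites these two facts as known rather than re-deriving the second via fill-in as you do. Two small remarks: in the paper's convention the set $\sreach^{\le}_r(v)$ contains $v$ itself, which is where your $+1$ comes from, so ``neighbours smaller than $v$'' should read ``$v$ together with its smaller neighbours''; and your final fallback paragraph is unnecessary and, as written, only treats $i=j-1$ --- the remaining $i<j$ are exactly what the second inequality of \eqref{eq:sandwich2} controls.
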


\begin{restatable}{corollary}{corSMWBE}\label{cor:smw-be}
    Let $\CC$ be a class of graphs.
    Then $\CC$ has bounded expansion if and only if $\smw_r(\CC)<\infty$ for every $r\in\N$. 
\end{restatable}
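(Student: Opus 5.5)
The plan is to derive the corollary directly from \Cref{lem:scol_smw} together with the standard characterization of bounded expansion through strong coloring numbers. Recall from sparsity theory (Zhu's theorem, as used in the paper's discussion after \Cref{lem:bounded_scol}) that a class $\CC$ has bounded expansion if and only if $\scol_r(\CC)<\infty$ for every $r\in\N$. Both directions will then follow from the two inequalities in \eqref{eq:sandwich2}, applied graph by graph and then taking suprema over $\CC$.

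For the forward direction, assume $\CC$ has bounded expansion and fix $r\in\N$. For $r=0$ the separator $\sep_0(v_j/\{v_1,\ldots,v_i\})$ is empty for $i<j$, because a path of length $0$ from $v_j$ stays at $v_j\notin\{v_1,\ldots,v_i\}$; hence $\smw_0$ is $0$. For $r\ge1$, the right-hand inequality of \eqref{eq:sandwich2} gives $\smw_r(G)\le \scol_{2r-1}(G)-1$ for every $G\in\CC$. Since $\scol_{2r-1}(\CC)<\infty$ by bounded expansion, it follows that $\smw_r(\CC)<\infty$.

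For the converse, assume $\smw_r(\CC)<\infty$ for all $r$. Then for $r\ge1$ the left-hand inequality $\scol_r(G)\le\smw_r(G)+1$ bounds $\scol_r(\CC)$. The case $r=0$ is trivial, since the strong $0$-coloring number is $1$, or $r=0$ can simply be excluded by convention. So $\scol_r(\CC)<\infty$ for every $r$, and $\CC$ has bounded expansion.

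No real obstacle is expected. The substantive content sits in \Cref{lem:scol_smw}, which may be assumed here. The only care needed is to handle the $r=0$ edge case and to quote the coloring-number characterization of bounded expansion correctly. Using \eqref{eq:sandwich1} together with the analogous weak-coloring-number characterization would work equally well.
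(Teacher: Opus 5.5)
Your proposal is correct and matches the paper's argument: the corollary is obtained by combining the sandwich inequalities of \Cref{lem:scol_smw} with the coloring-number characterization of bounded expansion (\Cref{fact:be}). Your explicit handling of the $r=0$ case is a harmless detail that the paper leaves implicit.
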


Note that the properties stated in these corollaries are also enjoyed by the strong coloring numbers (minus one). However, we argue that several arguments in the literature, which provide upper bounds in terms of weak coloring numbers can be strengthened, and made more direct, by using separation-width instead. 
A posteriori, by \eqref{eq:sandwich2}, this also provides an upper bound in terms of strong coloring numbers, but working with separation-width 
directly appears to be more natural and transparent. 
Strong coloring numbers may be much smaller than weak coloring numbers\footnote{For instance, for every $r,t\in \N$ there is a graph $G$ of tree-width $t$ with  $\wcol_r(G)= \binom{r+t}{t}=\Theta_r(t^r)$ \cite[Sec. 4]{DBLP:conf/wg/GroheKRSS15},
while by \Cref{cor:smw-tw}, $\smw_r(G)\le \smw_\infty(G)= t$.}, and this property is inherited by separation-width by \eqref{eq:sandwich2}; for this reason, upper bounds in terms of separation-width or strong coloring numbers are stronger than in terms of weak coloring numbers.
 We expect that further results in sparsity theory, where upper bounds which are provided in terms of weak coloring numbers, can be improved and streamlined by using separation-width instead.

 In particular, we observe that separation-width directly yields 
 a winning strategy for the Cops in the Cops and Robber game, in which the robber moves at speed $r$. 
Moreover, this strategy is \emph{monotone}: the robber never visits a vertex which was previously occupied by a cop.
 This is analogous (and generalizes)  the fact that tree decompositions yield descriptions of monotone winning strategies in the classic Cops and Robber game (where the robber moves at infinite speed). This further supports viewing separation-width as a canonical notion in sparsity.

 Furthermore, we observe that for $K_{t,t}$-free graphs,
 separation-width and merge-width are tightly related to each other,
 which supports the view that merge-width is a dense extension of separation-width.
%

\section{Preliminaries}

\paragraph*{Graphs.} Graphs are finite, undirected and without self-loops and parallel edges.
The set of vertices of a graph $G$ is denoted $V(G)$, and the set of edges of $G$ is denoted $E(G)$.  An unordered pair $\set{a,b}$ of distinct vertices is denoted $ab$. 
For $A,B\subseteq V(G)$ denote $AB\coloneqq \{ab\mid a\in A,b\in B\}$.
For  a vertex $v$ of a graph $G$  the (open) \emph{neighborhood} of $v$ in $G$ is $N_G(v)\coloneqq \{u \mid uv\in E(G)\}$,  
denoted $N(v)$ if $G$ is understood from the context.
The $r$-ball of $v$ in $G$, denoted $B_G^r(v)$, is the set of vertices at distance at most $r$ from $v$ in $G$.

A graph $H$ is a \emph{subgraph} of $G$ if $H$ is obtained by removing vertices and/or edges from $G$, 
and is an \emph{induced subgraph} of $G$ if $H$ is obtained by removing vertices from $G$, alongside with the edges incident to them.
The subgraph of $G$ induced by a set of vertices $X\subseteq V(G)$ is the graph $G[X]$ with vertices $X$ and edges $uv\in E(G)$ with $u,v\in X$.
A graph $G$ is \emph{$K_{t,t}$-free} if no subgraph of $G$ is isomorphic to the biclique $K_{t,t}$.
A graph class $\C$ is \emph{weakly sparse} if there is some $t\in\N$ such that every $G\in \C$ is $K_{t,t}$-free.

\paragraph*{Flips.} For a graph $G$ and 
two sets $A,B\subseteq V(G)$,
\emph{flipping} the pair $(A,B)$ in $G$ 
results in the graph $H$ on the same vertex set, where for every two distinct vertices $u, v \in V(G)$
\[
    uv \in E(H)
    \quad 
    \Leftrightarrow
    \quad
    uv \in \big(E(G) \triangle (A \times B \cup B \times A)\big),
\]
where \(\triangle\) denotes the symmetric difference.
Given a partition $\P$ of $V(G)$, 
a \emph{$\P$-flip} of $G$ is a graph $H$
obtained from $G$ by flipping arbitrary pairs $A,B\in\P$ (possibly with $A=B$).
A \emph{\(k\)-flip} is a \(\P\)-flip with \(\left|\P\right| \le k\).
The following properties of flips are readily seen.

\begin{lemma}\label{lem:flip-basics} For all graphs $G$, $H$, $H'$ on the same vertex set $V(G)$ and $S \subseteq V(G)$ we have:
    \begin{enumerate}
        \item \emph{Symmetry:} If $H$ is a $k$-flip of $G$, then $G$ is also a $k$-flip of $H$.
        \item \emph{Transitivity:}     If $H'$ is a $k'$-flip of $H$ and $H$ is a $k$-flip of $G$, then $H'$ is a $(k \cdot k')$-flip of $G$.
        \item \emph{Hereditariness:} If $H$ is a $k$-flip of $G$, then $H[S]$ is also a $k$-flip of $G[S]$.
    \end{enumerate}
 
\end{lemma}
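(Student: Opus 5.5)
The three properties follow from unfolding the definition of a $\P$-flip. Every $\P$-flip $H$ of $G$ is determined by a symmetric relation $F\subseteq \P\times\P$, the set of flipped pairs. For distinct $u\in A\in\P$ and $v\in B\in\P$, we have $uv\in E(H)$ if and only if either $uv\in E(G)$ and $(A,B)\notin F$, or $uv\notin E(G)$ and $(A,B)\in F$. Put differently, $E(H)=E(G)\triangle \bigcup_{(A,B)\in F}(A\times B\cup B\times A)$. One caveat: flipping the same pair twice cancels, so we assume without loss of generality that each unordered pair is flipped at most once. We then reason with this symmetric-difference description throughout.

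\emph{Symmetry.} Flipping the same pairs $F$ of $\P$ in $H$ toggles exactly the same set of vertex pairs again. Since $\triangle$ is an involution, this returns $G$, so $G$ is a $\P$-flip of $H$ with $|\P|\le k$.

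\emph{Transitivity.} Let $H$ be a $\P$-flip of $G$ via $F$, and let $H'$ be a $\Q$-flip of $H$ via $F'$, with $|\P|\le k$ and $|\Q|\le k'$. Take the common refinement $\mathcal R=\{P\cap Q: P\in\P, Q\in\Q\}\setminus\{\emptyset\}$, which has at most $kk'$ parts. For each pair $(R_1,R_2)$ of parts of $\mathcal R$, with $R_i\subseteq P_i\cap Q_i$, let it be flipped exactly when precisely one of $(P_1,P_2)\in F$ and $(Q_1,Q_2)\in F'$ holds. For distinct vertices $u\in R_1$, $v\in R_2$, the pair $uv$ is toggled in passing from $G$ to $H'$ exactly when it is toggled an odd number of times across the two steps, which is precisely this parity condition. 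The relation is symmetric because $F$ and $F'$ are. Hence $H'$ is an $\mathcal R$-flip of $G$, i.e.\ a $(k\cdot k')$-flip.

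\emph{Hereditariness.} Restrict the partition to $S$: let $\P|_S=\{A\cap S: A\in\P\}\setminus\{\emptyset\}$, which has at most $k$ parts. Flip $(A\cap S,B\cap S)$ whenever $(A,B)\in F$. For $u,v\in S$, adjacency in $H[S]$ equals adjacency in $H$, which equals adjacency in $G$ toggled by $F$ on the parts containing $u,v$. This is exactly the result of the restricted flip applied to $G[S]$.

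None of these steps is a genuine obstacle. The only point requiring care is the transitivity step: one must check that the composite flips are well defined on the refinement, i.e.\ that whether a pair of parts is flipped depends only on the parts and not on the chosen vertices. This holds because every part of $\mathcal R$ lies inside a single part of $\P$ and a single part of $\Q$.
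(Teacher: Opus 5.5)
Your proof is correct and takes essentially the paper's approach. The paper gives no argument beyond calling these properties ``readily seen'', and your symmetric-difference unfolding of the definition of a $\mathcal{P}$-flip (same flips for symmetry, common refinement with the parity rule for transitivity, trace partition on $S$ for hereditariness) is the routine verification that remark has in mind.
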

\section{Flips in sparse graphs}
In this section, we give a simple proof of the following lemma.

\begin{restatable}[Flip sparsification lemma]{lemma}{thmEngine}
  \label{thm:engine}Fix $k,t\in\N$.
    Let $G$ be a $K_{t,t}$-free graph and~$\P$ be a partition of $V(G)$.
    There exists a set $S\subseteq V(G)$ of size at most $\left|\P\right|\cdot t^2$ such that for every $\P$-flip $G'$ of $G$
    we have $$\dist_{G'}(u,v) \leq 3\qquad \text{for all }uv\in E(G\setminus S).$$
    In particular,
    \[
        \dist_{G\setminus S}(u,v) \ge \tfrac{1}{3}\dist_{G'}(u,v)\qquad\text{ for all  $u,v \in V(G)\setminus S$.}
    \]
\end{restatable}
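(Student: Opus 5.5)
We construct $S$ as a union $\bigcup_{P\in\P} S_P$ with $S_P\subseteq P$ or $S_P$ attached to $P$, and $|S_P|\le t^2$. The construction must not depend on the choice of $\P$-flip $G'$; it can depend only on $G$ and $\P$. The second statement follows from the first: replace each edge of a shortest $u$--$v$ path in $G\setminus S$ by a path of length at most $3$ in $G'$.

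First step: reduce to a local condition. Take an edge $uv\in E(G\setminus S)$ with $u\in A$ and $v\in B$, where $A,B\in\P$. If $G'$ does not flip $(A,B)$, then $uv\in E(G')$ and we are done. Otherwise, flipping changes adjacency exactly on pairs inside $AB$. So a path $u\,b'\,a'\,v$ with $b'\in B$ and $a'\in A$ exists in $G'$ as soon as $ub'\notin E(G)$, $a'b'\notin E(G)$ and $a'v\notin E(G)$. The case $A=B$ is the same with $a',b'$ taken in the one part. Thus we need to find, inside $A\times B$, an $A$-vertex non-adjacent to $v$ and a $B$-vertex non-adjacent to $u$, and these two must themselves be non-adjacent in $G$. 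This condition only involves $G$, which is why a single $S$ works for every $\P$-flip.

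Second step: build $S_P$ so that this is always possible. Parts with fewer than $t$ vertices are put entirely into $S$. For a larger part $P$, fix $X_P\subseteq P$ of size $t$ and put into $S$ the set $X_P$ together with $C_P$, the set of common neighbours of $X_P$; by $K_{t,t}$-freeness $|C_P|<t$. Then every $u\notin S$ has a non-neighbour in $X_P$. To also control the edge $a'b'$, enlarge the construction to about $t^2$ vertices per part. Take $X_P$ to be a union of $t$ disjoint $t$-sets $X_P^1,\dots,X_P^t$, and include each $C_P^i$. Then $u\notin S$ has a non-neighbour in every block $X_B^i$, and $v\notin S$ has one in every block $X_A^j$. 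This gives $t$ candidate $b'$'s and $t$ candidate $a'$'s. If all of these candidate pairs were edges, they would span a $K_{t,t}$, which is impossible. Hence some pair $a'b'$ is a non-edge, and this yields the path. The candidates must be distinct, which the disjoint blocks ensure.

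The main obstacle is making this counting exact. We need the candidate sets to really have size $t$ on both sides, and we need to handle parts of size between $t$ and $t^2$, which go entirely into $S$ with cost still at most $t^2$. All of this has to fit the bound $|\P|\cdot t^2$, which may require trimming the $C_P^i$'s or choosing blocks more cleverly. The case $A=B$ also needs care: there $a'$ and $b'$ come from the same part and must be distinct from $u$ and $v$. This holds automatically because $a',b'\in S$ while $u,v\notin S$.
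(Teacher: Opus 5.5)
Your local reduction, the blocks with their common neighbourhoods $C_P^i$, and the $K_{t,t}$ step are sound. There is one genuine gap: the proposal does not reach the bound $|S|\le|\P|\cdot t^2$, which is part of the statement, and you leave exactly this point open. For every part $P$ with $|P|\ge t^2$ you delete the $t^2$ vertices of $X_P$ together with up to $t(t-1)$ vertices of $\bigcup_i C_P^i$. That is up to $2t^2-t$ vertices per part, so you only obtain $|S|\le(2t^2-t)|\P|$. Trimming the $C_P^i$ is not the way out: a surviving vertex of $C_B^i$ has no non-neighbour in $X_B^i$, and one vertex may lie in every $C_B^i$.

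The missing observation is that $X_P$ never needs to be deleted. You only need a \emph{walk} of length at most $3$ in $G'$. So let $b'_i$ be any vertex of $X_B^i$ not adjacent to $u$ in $G$, allowing $b'_i=u$ when $A=B$ and $u\in X_A^i$, and treat $a'_j$ symmetrically. Every consecutive pair on $u\,b'_i\,a'_j\,v$ is then either equal, or a non-edge of $G$ across the flipped pair of parts, hence an edge of $G'$. The case $A=B$ also needs one more line than ``disjoint blocks ensure distinctness''. Disjoint blocks make the $a'_j$ pairwise distinct and the $b'_i$ pairwise distinct, but some $a'_j$ may coincide with some $b'_i$. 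In that case $u\,a'_j\,v$ is a walk of length $2$ in $G'$ (note $a'_j\ne u$ and $b'_i\ne v$, since $uv\in E(G)$). Otherwise the two candidate sets are disjoint and your $K_{t,t}$ argument applies. With $X_P\not\subseteq S$, a large part costs at most $t(t-1)$ vertices and a small part fewer than $t^2$, giving $|S|<|\P|\cdot t^2$.

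Once repaired, your route differs from the paper's. The paper deletes the vertices of parts of size $<t^2$, and every vertex with fewer than $t$ non-neighbours in some part of size $\ge t^2$. By \Cref{lem:almost_complete} there are fewer than $t$ such vertices per large part: any $t$ of them would have at least $|P|-t(t-1)\ge t$ common neighbours in $P$. For a surviving edge $uv$, the sets $B\setminus N_G(u)$ and $A\setminus N_G(v)$ then have size at least $t$ and serve directly as your candidate sets. So no blocks or arbitrary choices are needed, and a large part costs fewer than $t$ vertices instead of about $t^2$. This canonical set $\Sparsify(G,\P,t)$, together with its specific reason for each deleted vertex ($t$-completeness to a big part), is what the paper reuses later. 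It appears in \Cref{lem:sparsify_refinement} and in the proof of \Cref{lem:bounded_smw}, for instance through the ``latest reason'' argument. Your block-based set would not directly supply those properties.
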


To construct the separating set $S$ that approximates the flip, we need the following definitions.

\begin{definition}
    Fix a graph $G$, $t\in\N$, $P \subseteq V(G)$, and $v \in V(G)$.
    \begin{itemize}
        \item The set $P$ is \emph{$t$-small} if $\left|P\right| < t^2$.
        \item The set $P$ is \emph{$t$-big} if it is not $t$-small.
        \item The vertex $v$ is \emph{$t$-complete} to $P$ in $G$ if $\left|P\setminus N(v)\right| < t$.
    \end{itemize}
\end{definition}



\begin{definition}
    For a graph $G$, a partition $\P$ of $V(G)$, and $t \in \N$, we define $\Sparsify(G,\P,t)$ to be the set of all vertices $v\in V(G)$ that are either 
    \begin{enumerate}
        \item contained in a $t$-small part of $\P$, or
        \item $t$-complete to at least one $t$-big part of $\P$.      
    \end{enumerate}
\end{definition}

\begin{lemma}\label{lem:sparsify}
    For every $K_{t,t}$-free graph $G$, partition $\P$ of $V(G)$, and $\P$-flip $H$ of $G$, we have
    \[
        \dist_{H}(u,v) \leq 3\qquad \text{for all }uv\in E\big(G\setminus \Sparsify(G,\P,t)\big).
    \]
\end{lemma}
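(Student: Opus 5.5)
Fix an edge $uv$ of $G$ with $u,v\notin S:=\Sparsify(G,\P,t)$. Let $A,B\in\P$ be the parts containing $u$ and $v$. If the pair $(A,B)$ is not flipped in $H$, then $uv\in E(H)$ and $\dist_H(u,v)=1$. So we may assume that $(A,B)$ is flipped, and hence $uv\notin E(H)$. The plan is to exhibit a path of length at most $3$ in $H$ running $u\to x\to y\to v$ with $x\in B$ and $y\in A$.

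Because $u,v\notin S$, both $A$ and $B$ are $t$-big, so $|A|,|B|\ge t^2$. Also, $u$ is not $t$-complete to $B$, so $|B\setminus N_G(u)|\ge t$, and likewise $|A\setminus N_G(v)|\ge t$. Let $X:=B\setminus N_G(u)$ and $Y:=A\setminus N_G(v)$. We should check that $u\notin X$ and $v\notin Y$, or simply discard these vertices (the case $A=B$ needs a little care). Since $(A,B)$ is flipped, every $x\in X\setminus\{u\}$ is adjacent to $u$ in $H$, and every $y\in Y\setminus\{v\}$ is adjacent to $v$ in $H$. It remains to find $x\in X$ and $y\in Y$, with $x\neq y$, such that $xy\in E(H)$, i.e.\ $xy\notin E(G)$, since $x\in B$, $y\in A$ and the pair is flipped.

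This is where $K_{t,t}$-freeness is used, and it is the main step. Suppose every pair $x\in X$, $y\in Y$ with $x\neq y$ is an edge of $G$. Choose $t$-element subsets $X'\subseteq X$ and $Y'\subseteq Y$. If they are disjoint, they span a $K_{t,t}$ in $G$, which is a contradiction. In general they may overlap, and only when $A=B$; I would handle this by taking $|X|,|Y|\ge t$ together with the size bound $t^2$ to pick disjoint subsets. The cleaner route I would try first is to strengthen the counts: use $|B\setminus N(u)|\ge t$ and $|A|\ge t^2$ to obtain disjoint sets of size $t$, or, if $A=B$, note that every vertex of $X\cap Y$ would have to be adjacent to all other vertices, which yields a clique of size about $t$ and a $K_{t,t}$ once $t^2$ vertices are available. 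The threshold $t^2$ in the definition of ``big'' suggests an argument in this direction. I expect this disjointness bookkeeping to be the only delicate point.

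Once $x,y$ are found, $u\,x\,y\,v$ is a walk of length $3$ in $H$, so $\dist_H(u,v)\le 3$. The ``in particular'' statement then follows by replacing each edge of a shortest path in $G\setminus S$ by a walk of length at most $3$ in $H$.
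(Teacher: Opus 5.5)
Your setup matches the paper's: the sets $X=B\setminus N_G(u)$ and $Y=A\setminus N_G(v)$, their sizes being at least $t$ because $u,v\notin\Sparsify(G,\P,t)$, their containment in the $H$-neighborhoods of $u$ and $v$, and the $K_{t,t}$ argument. There is a genuine gap, though, confined to the case $A=B$ with $X\cap Y\neq\emptyset$. You flag this case as delicate but leave it open, and none of the fixes you sketch work:
\begin{itemize}
\item $|X|,|Y|\ge t$ and $|A|\ge t^2$ do not let you choose disjoint $t$-subsets, for example when $|X|=|Y|=t$ and $|X\cap Y|=t-1$.
\item The bigness of $A$ gives no further non-neighbors of $u$ or $v$.
\item A clique on roughly $t$ vertices does not contain $K_{t,t}$, which needs $2t$ vertices.
\item Discarding $u$ from $X$ (note $u\in X$ whenever $A=B$) can drop $|X|$ to $t-1$ and break the count. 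You should keep $u$ and simply note that $x=u$ shortens the walk.
\end{itemize}

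The missing observation is that overlap is the easy case, not the hard one. Since $uv\in E(G)$, we have $u\notin Y$ and $v\notin X$. So any $z\in X\cap Y$ is distinct from $u$ and $v$ and is a $G$-non-neighbor of both. Because the pair $(A,A)$ is flipped, $z$ is an $H$-neighbor of both, so $\dist_H(u,v)\le 2$. This is exactly how the paper handles it.

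Equivalently, in your contrapositive framing: $u\in X$ and $z\in Y$ would force $uz\in E(G)$, contradicting $z\in X$. Your insistence on $x\neq y$ joined by an $H$-edge is what hid this. Once overlap is dispatched, $X$ and $Y$ are disjoint sets of size at least $t$, and your $K_{t,t}$ argument completes the proof verbatim.
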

\begin{proof}
	Fix an edge $uv\in E\big(G\setminus \Sparsify(G,\P,t)\big)$ and let $P, Q\in \P$ be such that $u\in P$ and $v\in Q$ (with possibly $P=Q$).
	If $P$ and $Q$ are not flipped in $H$, then $\dist_{H}(u, v) = 1$, and we are done.
	Suppose now that $P$ and $Q$ are flipped, and define
    \[
        Q' := Q \setminus N_G(u)
        \quad
        \text{and}
        \quad
        P' := P \setminus N_G(v).
    \]
    See \cref{fig:sparsify_proof} for an illustration of the these sets.

    \begin{claim}\label{clm:big-q-prime}
        $|Q'| \geq t$ and $|P'| \geq t$.
    \end{claim}
    \begin{claimproof}
        The part $Q$ was not deleted, hence it must be $t$-big.
        Moreover, $u$ was not deleted, so $u$ is not $t$-complete to $Q$. Then by definition of $t$-completness, $|Q'| \geq t$. Analogously, also $|P'| \geq t$. 
    \end{claimproof}
    \begin{claim}\label{clm:dist-to-set}
        For all $x \in Q'$ and $y \in P'$, we have
        \[
        \dist_H(u,x) \leq 1
        \quad
        \text{and}
        \quad
        \dist_H(v,y) \leq 1.
        \]
    \end{claim}
    \begin{claimproof}
        If $u = x$, then $\dist_H(u,v) = 0$.
        Otherwise, $u$ and $x$ are distinct. Then since $x \in Q \setminus N_G(u)$ and the adjacency between $u \in P$ and $Q$ was flipped in $H$, $u$ and $x$ are adjacent in $H$.
        Analogously, also $\dist_H(v,y) \leq 1$.
    \end{claimproof}
    If $Q'$ and $P'$ overlap, then $\dist_H(u,v) \leq 2$ by \Cref{clm:dist-to-set}, and we are done (see \cref{fig:sparsify_proof}).
    Therefore, assume $Q'$ and $P'$ are disjoint.

    Assume towards a contradiction that $Q'P' \cap E(H) = \emptyset$.
    Because of the flip between $Q \supseteq Q'$ and $P\supseteq P'$, and because $Q'$ and $P'$ are disjoint, we must have $Q'P' \subseteq E(G)$.
    By \Cref{clm:big-q-prime}, $Q'$ and $P'$ form a $K_{t,t}$ in $G$; a contradiction.

    Otherwise, there is an edge between $Q'$ and $P'$ in $H$, and we conclude that $\dist_H(u,v) \leq 3$ by \Cref{clm:dist-to-set}.
\end{proof}

    \begin{figure}[]
	\centering
	\includegraphics[scale = 1.4]{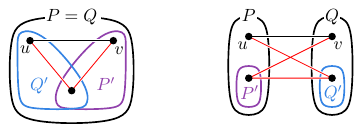}
	\caption{Illustration of the cases $P=Q$ and $P\neq Q$ in  \cref{lem:sparsify}. Edges in black are from the original graph $G$, edges in \textcolor{red}{red} are from the $\P$-flip $H$.}
	\label{fig:sparsify_proof}
\end{figure}

It remains to bound the size of $\Sparsify(G,\P,t)$, which we do with next two lemmas.

\begin{lemma}\label{lem:almost_complete}
    Let $G$ be a $K_{t,t}$-free graph and let $P$ be a $t$-big subset of $V(G)$. Less than $t$ vertices of $G$ are $t$-complete to $P$.
\end{lemma}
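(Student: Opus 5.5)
We argue by contradiction: suppose there is a set $T$ of $t$ distinct vertices, each of which is $t$-complete to the $t$-big set $P$. The aim is to extract a copy of $K_{t,t}$ in $G$ with one side inside $T$ and the other inside $P$.

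First we count non-neighbours. Each $v\in T$ satisfies $|P\setminus N(v)|\le t-1$. Hence the set
\[
    M := \bigcup_{v\in T} \bigl(P\setminus N(v)\bigr)
\]
has $|M|\le t(t-1)$. We also discard $T$ itself, because a vertex is not its own neighbour and the two sides of a biclique must be disjoint. Consider
\[
    R := P\setminus (M\cup T).
\]
Since $|P|\ge t^2$, this gives only $|R|\ge t^2-t(t-1)-t = 0$, which is not enough.

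To sharpen the count, note that each vertex $v\in T\cap P$ already satisfies $v\in P\setminus N(v)$. So $v$ is counted among its own at most $t-1$ non-neighbours in $P$, and the set $T\cap P$ is therefore contained in $M$. This gives
\[
    |P\setminus M|\ge t^2-t(t-1)=t,
\]
and every vertex of $P\setminus M$ lies outside $T$.

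Now pick any $t$ vertices of $P\setminus M$. Every one of them is adjacent to every vertex of $T$, and this set is disjoint from $T$. Together with $T$ it therefore forms a $K_{t,t}$ subgraph of $G$, a contradiction.

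The only delicate point is this bookkeeping for the vertices of $T$ that lie in $P$. We handle it by observing that such a vertex is automatically one of its own non-neighbours, so $T\cap P\subseteq M$ and removing $M$ already removes it. That is precisely why the $t$-big threshold $t^2$ suffices, rather than something like $t^2+t$.
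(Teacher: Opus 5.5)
Your proof is correct and is essentially the paper's argument: removing the at most $t(t-1)$ non-neighbours of the $t$ vertices from $P$ leaves at least $t$ common neighbours, and these are disjoint from $T$ because a vertex is never its own neighbour. This is why $T\cap P$ is already among the removed non-neighbours. The paper handles that disjointness point in a single parenthetical remark, just as your bookkeeping does.
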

\begin{proof}
Assume towards a contradiction that there is a set $A \subseteq V(G)$ of $t$ many vertices that are all $t$-complete to $P$.
Each vertex in $A$ has less than $t$ non-neighbors in $P$.
This means there must be a set $B \subseteq P$ of size $|B| \geq |P| - t(t-1) \geq t$ of vertices that are adjacent to every vertex in $A$. This implies that $A$ and $B$ are disjoint (each vertex is non-adjacent to itself), and form a $K_{t,t}$ in $G$; a contraction.   
\end{proof}

\begin{lemma}\label{lem:sparsify-size}
    For every $K_{t,t}$-free graph $G$ and partition $\P$ of $V(G)$, we have
    \[
        \left|\Sparsify(G,\P,t)\right| < \left|\P\right|\cdot t^2.
    \]
\end{lemma}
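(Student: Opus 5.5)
We bound the two kinds of vertices in $\Sparsify(G,\P,t)$ separately and charge each to a part of $\P$. Split the parts of $\P$ into the $t$-small parts $\P_{\mathrm{s}}$ and the $t$-big parts $\P_{\mathrm{b}}$. Every vertex of $\Sparsify(G,\P,t)$ either lies in some $P\in\P_{\mathrm{s}}$, or is $t$-complete to some $P\in\P_{\mathrm{b}}$. Hence
\[
\left|\Sparsify(G,\P,t)\right| \le \sum_{P\in\P_{\mathrm{s}}} |P| \;+\; \sum_{P\in\P_{\mathrm{b}}} \left|\set{v\in V(G) : v \text{ is $t$-complete to } P}\right|.
\]

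For the first sum, each $t$-small part has $|P|<t^2$ by definition, so it contributes at most $t^2-1$. For the second sum, \Cref{lem:almost_complete} applies directly, since $G$ is $K_{t,t}$-free and $P$ is $t$-big. It shows that fewer than $t$ vertices are $t$-complete to $P$, so each such part contributes at most $t-1\le t^2-1$.

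Summing over all parts gives $|\Sparsify(G,\P,t)|\le |\P|\cdot(t^2-1) < |\P|\cdot t^2$ whenever $\P$ is nonempty. The degenerate case of an empty partition (empty graph) gives an empty set and is trivial, or can be excluded since $\P$ partitions a nonempty vertex set. There is no real obstacle here. The only point needing care is that a vertex counted twice, for instance one lying in a small part that is also $t$-complete to a big part, only helps the union bound. Note also that $t\ge 1$ is implicit, since for $t=0$ no graph is $K_{0,0}$-free, so the hypothesis is vacuous. Combined with \Cref{lem:sparsify}, this yields \Cref{thm:engine} with $S=\Sparsify(G,\P,t)$.
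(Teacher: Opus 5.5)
Your proof is correct and is essentially the paper's argument: each $t$-small part contributes fewer than $t^2$ vertices, and each $t$-big part contributes fewer than $t$ vertices by \Cref{lem:almost_complete}. One small quibble: for an empty partition the strict inequality reads $0<0$, which is false, so that case has to be excluded by the convention that graphs are nonempty, rather than called trivial.
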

\begin{proof}
	The set $\Sparsify(G,\P,t)$ contains every $t$-small part of $\P$, each of size less than $t^2$. Moreover for each $t$-big part, by \cref{lem:almost_complete}, $\Sparsify(G,\P,t)$ contains less than $t$ vertices.
\end{proof}

\Cref{thm:engine} follows immediately from \Cref{lem:sparsify-size} and \Cref{lem:sparsify}.

\begin{lemma}\label{lem:sparsify_refinement}
   For every $K_{t,t}$-free graph $G$, partition $\P$ of $V(G)$, and refinement $\P'$ of $\P$ with $\left|\P'\right| = \left|\P\right| +1$, we have
   \[
       \left| \Sparsify(G,\P') \setminus \Sparsify(G,\P) \right| < 2t^2.
   \]
\end{lemma}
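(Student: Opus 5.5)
Since $\P'$ refines $\P$ and has exactly one more part, $\P'$ arises from $\P$ by splitting a single part $R\in\P$ into two parts $R_1,R_2\in\P'$; every other part of $\P$ is also a part of $\P'$. (Here $\Sparsify(G,\cdot)$ abbreviates $\Sparsify(G,\cdot,t)$.) Take $v\in\Sparsify(G,\P')\setminus\Sparsify(G,\P)$. The plan is to show that the reason $v$ enters $\Sparsify(G,\P')$ must involve $R_1$ or $R_2$, and then to count the vertices that can enter this way.

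Let $X\in\P'$ be the part containing $v$. Since $v\notin\Sparsify(G,\P)$, the part of $\P$ containing $v$ is $t$-big, and $v$ is not $t$-complete to any $t$-big part of $\P$. There are two ways for $v$ to lie in $\Sparsify(G,\P')$.
\begin{enumerate}
\item $X$ is $t$-small. If $X$ were also a part of $\P$, then $v$ would lie in a $t$-small part of $\P$, contradicting $v\notin\Sparsify(G,\P)$. Hence $X\in\{R_1,R_2\}$, and $v$ lies in a $t$-small $R_i$.
\item $v$ is $t$-complete to some $t$-big part $Y\in\P'$. If $Y$ were also a part of $\P$, it would be $t$-big there as well, and again we would have $v\in\Sparsify(G,\P)$. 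Hence $Y=R_i$ for some $i$, and $R_i$ is $t$-big.
\end{enumerate}

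The count then runs over $i\in\{1,2\}$ separately. If $R_i$ is $t$-small, it contributes at most $|R_i|\le t^2-1$ new vertices, all through case 1, and none through case 2. If $R_i$ is $t$-big, it contributes no vertices through case 1, and by \cref{lem:almost_complete} fewer than $t$ vertices are $t$-complete to it, so it contributes at most $t-1\le t^2-1$ vertices through case 2. Summing over $i=1,2$ gives at most $2(t^2-1)<2t^2$ new vertices.

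No step here is a real obstacle. The only point needing care is the observation that parts of $\P'$ which are unchanged from $\P$ cannot cause new membership in $\Sparsify$; this holds because being $t$-small or $t$-big is a property of the part alone, and $t$-completeness to a part depends only on $v$ and that part. Both properties are therefore identical in $\P$ and $\P'$ for the unchanged parts.
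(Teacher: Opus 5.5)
Your proposal is correct and follows the paper's proof: you split the single refined part into $R_1,R_2$ and bound each one's contribution by $|R_i|<t^2$ when it is $t$-small, or by fewer than $t$ via \cref{lem:almost_complete} when it is $t$-big. You also spell out the step the paper leaves implicit, namely that parts common to $\P$ and $\P'$ cannot put any new vertex into $\Sparsify$.
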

\begin{proof}
	Let $P \in \P$ such that there are two distinct non-empty parts $P_1,P_2 \in \P'$ with $P = P_1 \cup P_2$ and let $D := \Sparsify(G,\P') \setminus \Sparsify(G,\P)$. 
    For each part $Q \in \{ P_1,P_2 \}$, it is either
    \begin{itemize}
        \item $t$-small, and it can add to $D$ only the ${<}t^2$ vertices it contains, or
        \item $t$-big, and it can add to $D$ only the ${<}t$ vertices  $t$-complete to it (by \Cref{lem:almost_complete}).\qedhere
    \end{itemize}
\end{proof}

\section{Immediate applications of \Cref{thm:engine}}\label{sec:flatness}

In this section we showcase a series of immediate applications of our flip sparsification lemma, which will amount to a proof of \Cref{thm:sparse-classes}.

\subsection{Shrub-depth, clique-width, monadic stability, and monadic dependence}

The left column of \Cref{table} lists various characterizations of the class properties bounded tree-depth, bounded tree-width, and nowhere denseness.
Each of these characterizations involves deleting few vertices with the purpose of separating sets of vertices in the graph.

For the more restricted notions of bounded tree-depth and -width, we require a strong notion of separation: after the deletion, the separated vertices should be in different connected components (i.e., at distance $\infty$).
For the more general notion of nowhere denseness, a weaker notion of separation is used:
in the flipped graph, the separated vertices should be at distance ${\geq}r$ for some fixed $r\in\N$.
Observing the similarities between the characterizations, it is reasonable to view nowhere denseness as a ``local generalization'' of both bounded tree-depth and -width.

A recent line of research has lifted the deletion-based characterization for sparse classes to flip-based characterizations for dense classes in the right column of \Cref{table}.
The general pattern here is that one obtains a characterization of a dense notion, by exchanging vertex deletions with flips.
In the case of bounded tree-depth and -width, one obtains characterizations for bounded shrub-depth and clique-width, respectively.
For nowhere denseness, depending on which characterizations are generalized, one obtains characterizations for either monadic stability or monadic dependence.

Our \Cref{thm:engine} can be used to automatically reverse this lifting process for sparse classes. It provides an easy way to show that every $K_{t,t}$-free class satisfying a property from the dense column, also satisfies the corresponding property from the sparse column in the same row.

\newcommand{\heading}[1]{\underline{\textbf{#1:}}\smallskip\\}

\newcommand{\treedepth}{
    \heading{tree-depth}
    $\CC$ has bounded tree-depth\\
    $\Leftrightarrow$ $\CC$ is $\infty$-deletion-flat  \\
    $\Leftrightarrow$ $\CC$ has bd.\ $\infty$-splitter-rank 
    \smallskip\\
    \cite{flip-breakability} 
    \cite[by definition]{treedepth}
}

\newcommand{\shrubdepth}{
    \heading{shrub-depth}
    $\CC$ has bounded shrub-depth\\
    $\Leftrightarrow$ $\CC$ is $\infty$-flip-flat  \\
    $\Leftrightarrow$ $\CC$ has bd.\ $\infty$-flipper-rank 
    \smallskip\\
    \cite{flip-breakability} 
    \cite[by SC-depth]{shrubdepth}
}

\newcommand{\treewidth}{
    \heading{tree-width}
    $\CC$ has bounded tree-width\\
    $\Leftrightarrow$ $\CC$ is $\infty$-deletion-breakable  \\
    $\Leftrightarrow$ $\CC$ is $\infty$-deletion-separable
    \smallskip\\
    \cite{flip-breakability}
    \cite{graphminors2}
}

\newcommand{\cliquewidth}{
    \heading{clique-width}
    $\CC$ has bounded clique-width\\
    $\Leftrightarrow$ $\CC$ is $\infty$-flip-breakable  \\
    $\Leftrightarrow$ $\CC$ is $\infty$-flip-separable
    \smallskip\\
    \cite{flip-breakability}
    [folklore, see \Cref{lem:infty-separability}]
}

\newcommand{\nowheredenseStable}{
    \heading{nowhere denseness (ctd.)}
    $\CC$ is nowhere dense\\
    $\Leftrightarrow$ $\CC$ is $r$-deletion-flat $\forall r \in \NN$ \\
    $\Leftrightarrow$ $\CC$ has bd.\ $r$-splitter-rank $\forall r \in \NN$ \smallskip\\
    \cite{dawar2010homomorphism,nevsetvril2011nowhere} 
    \cite{GKS17}
}

\newcommand{\monadicallystable}{
    \heading{monadic stability}
    $\CC$ is monadically stable\\
    $\Leftrightarrow$ $\CC$ is $r$-flip-flat $\forall r \in \NN$ \\
    $\Leftrightarrow$ $\CC$ has bd.\ $r$-flipper-rank $\forall r \in \NN$
    \smallskip\\
    \cite{flip-flatness}
    \cite{flipper-game}
}

\newcommand{\nowheredenseDependent}{
    \heading{nowhere denseness}
    $\CC$ is nowhere dense\\
    $\Leftrightarrow$ $\CC$ is $r$-deletion-breakable $\forall r \in \NN$ \\
    $\Leftrightarrow$ $\CC$ is $r$-deletion-separable $\forall r \in \NN$
    \smallskip\\
    \cite{flip-breakability} \cite{structural-sparsity}
}

\newcommand{\monadicallydependent}{
    \heading{monadic dependence}
    $\CC$ is monadically dependent\\
    $\Leftrightarrow$ $\CC$ is $r$-flip-breakable $\forall r \in \NN$  \\
    $\Leftrightarrow$ $\CC$ is $r$-flip-separable $\forall r \in \NN$  
    \smallskip\\
    \cite{flip-breakability} 
    \cite{flip-separability}
}


 \NiceMatrixOptions{cell-space-limits = 5pt}
\newcolumntype{T}[1]{>{\raggedright\arraybackslash\setlength{\parskip}{0pt}\vspace{0pt}}p{#1}}

\begin{table*}[htbp]
   \centering
   \begin{NiceTabular}{|c|c|c|}[hvlines]
      & \textbf{sparse notions} & \textbf{dense notions} \\ 
      \Block{2-1}{\rotate \textbf{local width measures}} & \Block[l]{}{\nowheredenseDependent} & \Block[l]{}{\monadicallydependent}  \\
      & \Block[l]{}{\nowheredenseStable} & \Block[l]{}{\monadicallystable}  \\
      \Block{2-1}{\rotate \textbf{global width measures}}& \Block[l]{}{\treewidth} & \Block[l]{}{\cliquewidth}  \\
      & \Block[l]{}{\treedepth} & \Block[l]{}{\shrubdepth}  \\
   \end{NiceTabular}
   \medskip
   \caption{Characterizations of sparse graph classes based on vertex deletions and of dense graph classes based on flips.\label{table}}
 \end{table*}

Let us define the notions appearing in the \Cref{table} and explain how to apply \Cref{thm:engine} to them. To give uniform definitions, we define the following sparse analog of a $k$-flip.

\begin{definition}
    We call a graph $H$ a \emph{$k$-deletion} of a graph $G$, if $H$ is obtained form $G$ by deleting at most $k$ vertices.    
\end{definition}

The first property we define is the \emph{flip-flatness} property from \Cref{ex:flip-flatness} in the introduction. 

\newcommand{\footnoteUQW}{The property of being $r$-deletion-flat for every $r\in\N$ was originally named \emph{uniform quasi-wideness}~\cite{dawar2010homomorphism,nevsetvril2011nowhere}. We renamed it here to provide a unified presentation.}

\begin{definition}
    For $r\in \NN \cup \{ \infty \}$, a graph class $\CC$ is \emph{$r$-flip-flat}, if there exists a $\emph{margin}$ function $M:\NN \to \NN$ and a \emph{budget} $k \in \NN$ such that for every $G \in \CC$ and set $W \subseteq V(G)$ of size at least $M(m)$, there is a $k$-flip $H$ of $G$ and a size $m$ set $A \subseteq W$ such that for all distinct $u,v \in A$
    \[
        \dist_H(u,v) > r.
    \]
The definition of \emph{$r$-deletion-flatness}\footnote{\footnoteUQW} is derived by replacing $k$-flip with $k$-deletion.
\end{definition}

The next \emph{flip-breakability} property generalizes flip-flatness by relaxing the result condition. Instead of separating the vertices of a large set pairwise, we only demand two large sets that are separated from each other.

 \begin{definition}
    For $r\in \NN \cup \{ \infty \}$, a graph class $\CC$ is \emph{$r$-flip-breakable}, if there exists a $\emph{margin}$ function $M:\NN \to \NN$ and a \emph{budget} $k \in \NN$ such that for every $G \in \CC$ and set $W \subseteq V(G)$ of size at least $M(m)$, there is a $k$-flip $H$ of $G$ and two disjoint size $m$ sets $A,B \subseteq W$ such that for all $u \in A$ and $v \in B$ we have
    \[
        \dist_H(u,v) > r.
    \]
    The definition of \emph{$r$-deletion-breakability} is derived by replacing $k$-flip with $k$-deletion.
 \end{definition}

The next \emph{flip-separability} property claims the existence of small balanced (flip-)separators.

 \newcommand{\footnoteSeparability}{
    The definitions of deletion- and flip-separability used in \cite{structural-sparsity,flip-separability} allow to shrink the size of the neighborhoods to
    $\left|N_r^H[v] \cap W\right| \leq \left \lceil \eps \cdot \left|W\right| \right \rceil$
    for any fixed $\eps > 0$, at the cost of $k$ being also dependent on $\eps$.
    Additionally, \cite{flip-separability} uses weighted graphs.
    These more elaborate separability variants imply the version we present here. Moreover, in the two papers, the proofs showing that deletion-separability (flip-separability) implies nowhere denseness (monadic dependence) only requires separability for $\eps = \tfrac12$ and unit weights, which makes the simplified definition we present here equivalent to the ones presented in \cite{structural-sparsity,flip-separability}.
 }

 \begin{definition}
    For $r\in \NN \cup \{ \infty \}$, a graph class $\CC$ is \emph{$r$-flip-separable}\footnote{\footnoteSeparability}, if there exists a \emph{budget} $k \in \NN$ such that for every $G \in \CC$ and set $W \subseteq V(G)$, there is a $k$-flip $H$ of $G$ such that for every $v \in V(G)$ we have
    \[
        \left|B^r_H(v) \cap W\right| \leq \left \lceil \tfrac12 \cdot \left|W\right| \right \rceil.
    \]
    The definition of \emph{$r$-deletion-separability} is derived by replacing $k$-flip with $k$-deletion.
 \end{definition}


We next define the \emph{flipper-} and \emph{splitter-rank}.

 \begin{definition}
    Fix $r\in \NN \cup \{\infty\}$ and $k\in\NN$. The single vertex graph $K_1$ 
    we define 
    \[
        \frk_{r,k}(K_1) := 1.
    \]
    For every other graph $G$ we define
    \[
        \frk_{r,k}(G) := 1+ \min_{\substack{\text{$H$ is a} \\ \text{$k$-flip of $G$}}} \max_{v \in V(H)} \frk_{r,k}(H[B^r_H(v)]), \text{and}
    \]
    The definition of $\srk_{r,k}(G)$ is derived by replacing $k$-flip with $k$-deletion.
    A graph class $\CC$ has \emph{bounded $r$-flipper-rank} if there are $k,\ell \in \NN$ such that $\frk_{r,k}(G) \leq \ell$ for all $G \in \CC$.
    The definition of \emph{bounded $r$-splitter-rank} is derived by replacing $\frk$ with $\srk$.
\end{definition}

The flipper- and splitter-rank play a crucial role in the first-order model checking algorithms for monadically stable and nowhere dense graph classes~\cite{ssmc,dreier2024stablemc,GKS17}.
Intuitively, we can understand the flipper-rank (and also the splitter-rank) as a game played on a graph $G$ between two players \emph{flipper} and \emph{localizer}.
In every round of the game, flipper applies a $k$-flip to the current graph (with the goal of decomposing it as fast as possible), and localizer shrinks the arena to an $r$-neighborhood (with the goal of surviving as long as possible).
The flipper-rank then denotes the number of rounds needed for flipper to finish the game.


\bigskip

We will now use \Cref{thm:engine} to prove the following theorem.

\begin{theorem}\label{thm:sparse-flatness}
    Every graph class $\CC$ that excludes $K_{t,t}$ as a subgraph and every $r\in \NN \cup \{ \infty \}$ the following holds:
    \begin{enumerate}[label=(\Alph*)]
        \item If $\CC$ is $3r$-flip-breakable, then $\CC$ is also $r$-deletion-breakable.
        \item If $\CC$ is $3r$-flip-separable, then $\CC$ is also $r$-deletion-separable.
        \item If $\CC$ is $3r$-flip-flat, then $\CC$ is also $r$-deletion-flat.
        \item If $\CC$ bounded $3r$-flipper-rank, then $\CC$ also has bounded $r$-splitter-rank.
    \end{enumerate}
\end{theorem}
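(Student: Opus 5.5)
The common engine is \Cref{thm:engine}. Given a $k$-flip $H$ of $G$ with partition $\P$, $|\P|\le k$, it produces $S=\Sparsify(G,\P,t)$ with $|S|<kt^2$ such that $\dist_{G\setminus S}(u,v)\ge \tfrac13\dist_H(u,v)$ for all $u,v\notin S$. In particular, $\dist_H(u,v)>3r$ implies $\dist_{G\setminus S}(u,v)>r$, and $B^r_{G\setminus S}(v)\subseteq B^{3r}_H(v)$; for $r=\infty$ both statements read: connected in $G\setminus S$ implies connected in $H$. In each part the deletion budget will be $k'=kt^2$.

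\textbf{(A) and (C).} Given the margin $M$ for $3r$-flip-breakability, I would use margin $M'(m)=M(m+kt^2)$. For $W$ of size $M'(m)$, apply the hypothesis with target $m+kt^2$ to get disjoint $A,B$ and a $k$-flip $H$. Let $S$ be the set from \Cref{thm:engine} and set $A'=A\setminus S$, $B'=B\setminus S$. Each still has size at least $m$; trim them to size $m$. Pairs in $A'\times B'$ are at distance $>3r$ in $H$, hence at distance $>r$ in $G\setminus S$. Flatness is handled identically, applied to pairs within $A$.

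\textbf{(B).} Take the $k$-flip $H$ for $W$ and let $S$ be as above. For $v\notin S$, $|B^r_{G\setminus S}(v)\cap W|\le |B^{3r}_H(v)\cap W|\le\lceil |W|/2\rceil$. Vertices of $S$ are deleted, so the condition is vacuous for them, or one reads the $k$-deletion condition as ranging over $V(G\setminus S)$.

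\textbf{(D).} This is the step I expect to need the most care: the recursion passes to induced subgraphs $H[B^{3r}_H(v)]$. I would prove by induction on $\ell$ that for every $K_{t,t}$-free $G$ with $\frk_{3r,k}(G)\le \ell$ we have $\srk_{r,kt^2}(G)\le\ell$. Take the optimal $k$-flip $H$ of $G$ and $S$ from \Cref{thm:engine}, and fix $v\notin S$. Set $X=B^r_{G\setminus S}(v)\subseteq Y:=B^{3r}_H(v)$. The graph $G[X]$ is $K_{t,t}$-free, and $H[X]$ is a $k$-flip of it by hereditariness in \Cref{lem:flip-basics}. The difficulty is monotonicity: I need $\frk_{3r,k}(H[X])\le\frk_{3r,k}(H[Y])$ for induced subgraphs. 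To get this, I would check that flipper rank is monotone under induced subgraphs, by induction: restrict the flips to $X$, and note that balls in the induced subgraph are contained in the restrictions of the corresponding balls.

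I also need to translate between the rank of $H[X]$ and that of $G[X]$. Defining rank via ``$G$ itself'' versus ``some $k$-flip of $G$'' is handled by the symmetry and transitivity parts of \Cref{lem:flip-basics}: a $k$-flip composed with the optimal move is a $k^2$-flip. To avoid the budget blowing up with depth, I would instead state the induction for pairs consisting of $G$ and a $k$-flip $H$ of $G$ with $\frk(H)\le\ell$. Because the first move can absorb the initial flip by transitivity, this makes the constants depend only on $k$, $t$, and $\ell$. Then $G[X]$ satisfies the inductive hypothesis, so $\srk_{r,k'}(G\setminus S)[X]\le \ell-1$, giving $\srk\le\ell$ overall.
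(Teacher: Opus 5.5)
Parts (A)--(C) follow the paper's argument: apply \Cref{thm:engine} with deletion budget $kt^2$. Your enlarged margin $M(m+kt^2)$, which discards the vertices of $A$ and $B$ that land in $S$, is slightly more careful than the paper. The paper keeps margin $M$ and tacitly treats deleted vertices as infinitely far away.

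For (D) you take a genuinely different route from \Cref{lem:frk}, and it is correct once the invariant is stated properly. The paper inducts on a single graph. Knowing $\frk_{3r,k}(H[X])\le\ell$ and that $G[X]$ is a $k$-flip of $H[X]$, it invokes \Cref{lem:flipper-transitive} to get $\frk_{3r,k^2}(G[X])\le\ell$. So the flipper's own budget squares at every level, and the deletion budget comes out as $k^{2^\ell}t^2$. Your pair bookkeeping keeps the flipper budget fixed at $k$ and composes flips only on the side of the original graph, which gives a single-exponential bound.

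For this to work, the invariant must carry the flip distance as a free parameter. As written (``a $k$-flip $H$ of $G$'') it is not preserved. If $H$ is a $k$-flip of $G$ and $H^*$ is the optimal $k$-flip of $H$, the next pair is $(G[X],H^*[X])$, and $H^*[X]$ is in general only a $k^2$-flip of $G[X]$.

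The statement that does go through is: for every $m$, if $G$ is $K_{t,t}$-free, $H$ is an $m$-flip of $G$, and $\frk_{3r,k}(H)\le\ell$, then $\srk_{r,mk^{\ell-1}t^2}(G)\le\ell$. The inductive step has three parts:
\begin{itemize}
\item apply \Cref{thm:engine} to the $mk$-flip $H^*$ of $G$, deleting at most $mkt^2$ vertices;
\item use \Cref{lem:flipper-hereditary} to pass from $H^*[B^{3r}_{H^*}(v)]$ to $H^*[X]$;
\item recurse with $m\mapsto mk$.
\end{itemize}
Taking $m=1$ and $H=G$ gives budget $k^{\ell-1}t^2$. So your opening claim $\srk_{r,kt^2}(G)\le\ell$ should be replaced by this; the argument yields $kt^2$ only for $\ell\le 2$.

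The paper's version buys a self-contained hypothesis about one graph at the price of a doubly exponential constant. Yours buys the better constant at the price of the auxiliary pair.
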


Together with the characterizations in \Cref{table}, 
\begin{itemize}
    \item each of (A) and (B) individually implies both \eqref{it:cw} and \eqref{it:mon-dep} of \Cref{thm:sparse-classes}, and
    \item each of (C) and (D) individually implies both \eqref{it:sd} and \eqref{it:mon-stab} of \Cref{thm:sparse-classes}.
\end{itemize}
We start with the first three implications.

 \begin{proof}[{Proof of \Cref{thm:sparse-flatness}, items A, B, C}]
    We prove (A), while (B) and (C) are proved analogously.
    
        Let $M : \NN \rightarrow \NN$ and $k\in \NN$ be the margin and budget for which $\CC$ is $3r$-flip-breakable.
        We claim that $\CC$ is $r$-deletion-breakable for margin $M$ and budget $k' := kt^2$.
        Given a size $M(m)$ vertex subset $W$ in a graph $G\in\CC$, we can apply $3r$-flip-breakability which yields two disjoint size $m$ subsets $A$ and $B$ of $W$ and a $k$-flip $H$ in which all elements from $A$ are at distance greater than $3r$ from all elements in $B$.
        Using \Cref{thm:engine}, we obtain from $H$ a $kt^2$-deletion $G'$ of $G$ in which all elements from $A$ are at distance greater than $r$ from all elements in $B$.
 \end{proof}

The last item of \Cref{thm:sparse-flatness} is implied by the following lemma.

\begin{lemma}\label{lem:frk}
    For every $K_{t,t}$-free graph $G$ we have
    \[
        \frk_{3r,k}(G) \leq \ell
        \Rightarrow
        \srk_{r,k'}(G) \leq \ell,
    \]
    where $k' := k^{(2^\ell)}t^2$.
\end{lemma}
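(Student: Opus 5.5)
We prove a stronger statement by induction on $\ell$, since the flipper-rank recursion passes to induced subgraphs of \emph{flips} of $G$, not of $G$ itself. The statement is: \emph{if $G$ is $K_{t,t}$-free, $F$ is a $\P$-flip of $G$ for a partition $\P$ with $|\P|\le k^{i}$, and $\frk_{3r,k}(F)\le \ell$, then $\srk_{r,k^{i+\ell}t^2}(G)\le \ell$.} Taking $i=0$ and $F=G$ gives budget $k^{\ell}t^2\le k^{(2^\ell)}t^2$. This suffices if $\srk_{r,\cdot}$ is monotone in the budget; if that is unclear, we simply work with the larger budget $k'$ throughout, since all our deletion sets are small enough. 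The case $k=0$ is trivial, so assume $k\ge 1$.

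The base case $\ell=1$ means $F=K_1$, hence $G=K_1$ and $\srk=1$. For the inductive step, $G\neq K_1$. Choose the optimal $k$-flip $H$ of $F$ from the definition of $\frk_{3r,k}(F)$. By transitivity (\Cref{lem:flip-basics}), $H$ is a $\Q$-flip of $G$ for some partition $\Q$ with $|\Q|\le k^{i+1}$. Every ball $H[B^{3r}_H(v)]$ satisfies $\frk_{3r,k}\le \ell-1$. Apply \Cref{thm:engine} to $G$ and $\Q$ to get $S$ with $|S|\le k^{i+1}t^2\le k^{i+\ell}t^2$. For every $v\notin S$ we then have $B^r_{G\setminus S}(v)\subseteq B^{3r}_H(v)$. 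So $G\setminus S$ is a legal $k'$-deletion of $G$ in the splitter recursion, and it remains to bound $\srk_{r,k'}$ of each $(G\setminus S)[B^r_{G\setminus S}(v)]$ by $\ell-1$.

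Let $X=B^r_{G\setminus S}(v)\subseteq B^{3r}_H(v)$. By hereditariness (\Cref{lem:flip-basics}), $H[X]$ is a $k^{i+1}$-flip of $G[X]$, and $G[X]$ is $K_{t,t}$-free. The needed auxiliary fact is that $\frk_{3r,k}$ is monotone under induced subgraphs. It is proved by a straightforward induction: restricting a $k$-flip of a graph to $X$ gives a $k$-flip of the induced subgraph, and balls in the induced subgraph are contained in the corresponding balls of the whole graph. Hence $\frk_{3r,k}(H[X])\le \frk_{3r,k}(H[B^{3r}_H(v)])\le \ell-1$. The induction hypothesis with $i+1$ and $\ell-1$ then gives $\srk_{r,k^{i+\ell}t^2}(G[X])\le \ell-1$, where $G[X]=(G\setminus S)[X]$. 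This closes the induction.

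The main obstacle is bookkeeping, not mathematics. The key point is to formulate the right invariant: the current graph in the splitter game is an induced subgraph of $G$, while the current graph in the flipper game is a bounded flip of it, with the flip complexity growing by a factor of $k$ per round. The degenerate cases also need checking: vertices of the deleted set, and the convention at $K_1$ when a ball in $G\setminus S$ is a single vertex.
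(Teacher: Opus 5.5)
Your proof is correct. It follows the same skeleton as the paper's proof: induction on $\ell$, applying \Cref{thm:engine} to the flipper's move at each level, plus hereditariness of flipper-rank. The difference is in how the flips are bookkept.

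\textbf{The paper's route.} The paper uses the lemma itself as the induction hypothesis. After sparsifying, it observes that $G[B^r_{G'}(v)]$ is a $k$-flip of $H[B^r_{G'}(v)]$ (symmetry). It then invokes \Cref{lem:flipper-transitive} to obtain $\frk_{3r,k^2}(G[B^r_{G'}(v)])\le \ell$, and applies the hypothesis with $k^2$ in place of $k$. The flipper budget is therefore squared at every level, which is where the exponent $2^\ell$ comes from.

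\textbf{Your route.} You strengthen the hypothesis to carry the flipped graph $F$ alongside the sparse graph, with the invariant that $F$ is a $k^i$-flip of $G$. The flipper budget stays $k$. Only the partition size fed to \Cref{thm:engine} grows, by a factor $k$ per level. The deletion budget $k^{i+\ell}t^2$ stays uniform across the recursion because $i+\ell$ is invariant.

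\textbf{Comparison.} Your approach avoids \Cref{lem:flipper-transitive} and the symmetry of flips entirely. It yields the better bound $k^{\ell}t^2$, which implies the stated $k^{(2^\ell)}t^2$ since splitter-rank is monotone in the budget. The price is a slightly more elaborate induction statement. The paper's version is shorter because its statement reproduces itself with $k$ replaced by $k^2$.
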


In order to prove it, we gather some facts about the flipper-rank.

\begin{lemma}\label{lem:flipper-hereditary}
    For every graph $G$ and induced subgraph $H$ of $G$ we have
    \[
        \frk_{r,k}(H) \leq \frk_{r,k}(G).
    \]
\end{lemma}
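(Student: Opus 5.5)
The statement to prove is that $\frk_{r,k}(H) \le \frk_{r,k}(G)$ for every induced subgraph $H$ of $G$. I would argue by induction on $\frk_{r,k}(G)$, or equivalently on $|V(G)|$, since the recursion only ever passes to induced subgraphs of balls. The key tool is Hereditariness from \Cref{lem:flip-basics}: if $G'$ is a $k$-flip of $G$, then $G'[S]$ is a $k$-flip of $G[S]$ for any $S\subseteq V(G)$. I assume $H$ is nonempty, so that $H=G[S]$ with $S\neq\emptyset$.

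\textbf{Base case.} If $H=K_1$, then $\frk_{r,k}(H)=1$. Every graph has rank at least $1$ by definition, so $\frk_{r,k}(G)\ge 1$ and we are done. This also covers the case $G=K_1$, since then $H=G$.

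\textbf{Inductive step.} Let $H=G[S]$ have at least two vertices, so $G\neq K_1$.
\begin{enumerate}
\item Choose a $k$-flip $G'$ of $G$ attaining the minimum in the definition, so that
\[
\frk_{r,k}(G)=1+\max_{v\in V(G)}\frk_{r,k}\big(G'[B^r_{G'}(v)]\big).
\]
\item Put $H':=G'[S]$. By Hereditariness, $H'$ is a $k$-flip of $H$, so it is an admissible choice in the minimum defining $\frk_{r,k}(H)$. This gives
\[
\frk_{r,k}(H)\le 1+\max_{v\in S}\frk_{r,k}\big(H'[B^r_{H'}(v)]\big).
\]
\item Since $H'$ is an induced subgraph of $G'$, distances can only grow, so $B^r_{H'}(v)\subseteq B^r_{G'}(v)\cap S$. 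Hence $H'[B^r_{H'}(v)]$ is an induced subgraph of $G'[B^r_{G'}(v)]$.
\item Apply the induction hypothesis to the pair $H'[B^r_{H'}(v)]\subseteq G'[B^r_{G'}(v)]$. It is legitimate because the larger graph has strictly smaller rank than $G$, namely at most $\frk_{r,k}(G)-1$.
\item This yields $\frk_{r,k}(H'[B^r_{H'}(v)])\le \frk_{r,k}(G'[B^r_{G'}(v)])$ for every $v\in S$. Plugging this into the bound from step~2 gives $\frk_{r,k}(H)\le\frk_{r,k}(G)$.
\end{enumerate}

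\textbf{Well-foundedness.} The only delicate point is that the recursion terminates, so that the induction measure really decreases. If $\frk_{r,k}(G)$ is finite, inducting on its value works as in step~4. If it is infinite, the inequality holds trivially. So a well-founded induction on the rank value, with a trivial infinite case, settles the lemma.
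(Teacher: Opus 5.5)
Your proof is correct and follows the same route as the paper, which only says ``by induction on the flipper-rank using hereditariness''. You supply exactly those details: restrict an optimal $k$-flip $G'$ to $S$, then observe that $H'[B^r_{H'}(v)]$ is an induced subgraph of $G'[B^r_{G'}(v)]$. One small slip: your opening remark that inducting on $|V(G)|$ would be equivalent is wrong, since a ball $B^r_{G'}(v)$ may be all of $V(G)$; this is harmless because the induction you actually run is on the rank value.
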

\begin{proof}
    By induction on the flipper-rank using hereditariness (\Cref{lem:flip-basics}).
\end{proof}

\begin{lemma}\label{lem:flipper-transitive}
    For every graph $G$ and $k'$-flip $G'$ of $G$ we have
    \[
        \frk_{r,k\cdot k'}(G') \leq \frk_{r,k}(G).
    \]
\end{lemma}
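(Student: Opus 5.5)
We argue by induction on $\ell := \frk_{r,k}(G)$, proving the statement ``for every graph $G$ and every $k'$-flip $G'$ of $G$, $\frk_{r,kk'}(G') \le \frk_{r,k}(G)$'' simultaneously for all $k'$. The base case is $G = K_1$: then $G' = K_1$ as well, and both ranks equal $1$.

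For the inductive step, let $G$ have at least two vertices and let $G'$ be a $k'$-flip of $G$. By definition of the flipper-rank there is a $k$-flip $H$ of $G$ with $\frk_{r,k}(H[B^r_H(v)]) \le \ell-1$ for every vertex $v$. The natural move is to use the \emph{same} graph $H$ as flipper's first move in $G'$. By symmetry (\Cref{lem:flip-basics}), $G$ is a $k'$-flip of $G'$. By transitivity, $H$ is then a $(k\cdot k')$-flip of $G'$, which is an allowed move for the budget $kk'$.

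After this move, the resulting subgame graphs $H[B^r_H(v)]$ are identical to those in the game on $G$. What remains is to bound $\frk_{r,kk'}(H[B^r_H(v)])$ by $\frk_{r,k}(H[B^r_H(v)]) \le \ell - 1$. This needs a monotonicity of $\frk$ in the budget: a larger budget can only help, since every $k$-flip is also a $kk'$-flip whenever $k' \ge 1$. That monotonicity follows by a straightforward induction, and it gives $\frk_{r,kk'}(G') \le 1 + (\ell-1) = \ell$.

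Since the subgames coincide exactly, the induction hypothesis is not even needed beyond this monotonicity fact. If one prefers, the whole argument can be phrased as a single induction proving monotonicity in $k$. The only subtle points are two: checking that a $k$-flip is also a $kk'$-flip (a coarser partition can be padded with empty parts, or the definition of a $k$-flip as $|\P| \le k$ handles it directly), and handling the degenerate case $k' = 0$, which only arises for the empty graph and can be excluded. The main obstacle, such as it is, is phrasing the symmetry and transitivity step correctly, namely that $H$ is a $kk'$-flip of $G'$ and not merely of $G$.
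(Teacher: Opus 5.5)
Your proof is correct and follows the paper's argument. By symmetry $G$ is a $k'$-flip of $G'$, so by transitivity the witness $H$ for $\frk_{r,k}(G)$ is a $(k\cdot k')$-flip of $G'$ and serves as flipper's first move there. Your explicit use of monotonicity of the flipper-rank in the budget, to get $\frk_{r,kk'}(H[B^r_H(v)]) \le \frk_{r,k}(H[B^r_H(v)])$, fills in a step that the paper's one-line proof leaves implicit in the phrase ``witnesses the same bound''.
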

\begin{proof}
    By symmetry (\Cref{lem:flip-basics}), $G$ is a $k'$-flip of $G'$.
    Let $H$ be the $k$-flip of $G$ witnessing a bound on $\frk_{r,k}(G)$.
    By transitivity (\Cref{lem:flip-basics}), $H$ is also a $(k \cdot k')$-flip of $G'$ that witnesses the same bound on $\frk_{r,k\cdot k'}(G')$.
\end{proof}

\begin{proof}[Proof of \Cref{lem:frk}]
    We show the lemma by induction over $\ell$. It holds for $\ell =1$, since in this case the graph $G $ is $K_1$.
    Assume we are given $G$ with $\frk_{3r,k}(G) \leq \ell + 1$.
    Let $H$ be the $k$-flip of $G$ witnessing this, i.e.,
    \begin{equation}\label{eq:srk-by-frk}
        \max_{v \in V(H)} \frk_{3r,k}(H[B^{3r}_H(v)]) \leq \ell.
    \end{equation}
    By \Cref{thm:engine} there exists a $kt^2$-deletion $G'$ of $G$ such that for every $v \in V(G')$
    \[
        B^r_{G'}(v) \subseteq B^{3r}_H(v).
    \]
    By \Cref{lem:flipper-hereditary}, \eqref{eq:srk-by-frk} yields
    \begin{equation*}
    	\max_{v \in V(G')} \frk_{3r,k}(H[B^{r}_{G'}(v)]) \leq \ell.
    \end{equation*}
    Furthermore, as $G'[B^{r}_{G'}(v)] = G[B^{r}_{G'}(v)]$ is a $k$-flip of $H[B^{r}_{G'}(v)]$ by symmetry (\Cref{lem:flip-basics}), it follows by \Cref{lem:flipper-transitive} that
        \begin{equation*}
    	\max_{v \in V(G')} \frk_{3r,k^2}(G'[B^{r}_{G'}(v)]) \leq \ell.
    \end{equation*}    
    Moreover, $G'$ is still $K_{t,t}$-free, so we can apply induction and conclude that
    \[
        \max_{v \in V(G')} \srk_{r,k'}(G'[B^{r}_{G'}(v)]) \leq \ell,
    \]
    where
    \[
        k' :=  (k^2)^{(2^\ell)} t^2 = k^{(2^{\ell + 1})}t^2.
    \]
    Since $k' \geq kt^2$, $G'$ is a witness for $\srk_{r,k'}(G) \leq \ell + 1$, as desired.    
\end{proof}

\subsection{Strong flip-flatness}
 A graph class $\CC$ is \emph{strongly flip-flat} \cite{eleftheriadis2025extension} if for every $r$ it is $r$-flip-flat with a budget $k$ independent of~$r$.
 Similarly, it is \emph{uniformly almost-wide} if for every $r$ it is $r$-deletion-flat with a budget $k$ independent of $r$.
 If $\CC$ is a strongly flip-flat class of $K_{t,t}$-free graphs,
 with budget $k$ for all $r$,
 then by the same proof as for implication (C) of \Cref{thm:sparse-flatness}, 
 it is also uniformly almost wide with budget $k'=kt^2$ for all $r$.
 Hence, weakly sparse, strongly flip-flat classes are uniformly almost wide, reproving the main result of~\cite{ghasemi2025weaklysparsestronglyflipflatclasses}, stated in 
\Cref{thm:sparse-classes} \eqref{it:stronglyflip}.


\subsection{Flip-width}
We now prove  that weakly sparse  graph classes of bounded flip-width have bounded expansion, thus proving \Cref{thm:sparse-classes} \eqref{it:fw}.
Classes of bounded flip-width were introduced by Toruńczyk 
as a dense analogue of bounded expansion  \cite{Tor23}.

\begin{restatable}[{\cite[Thm. 6.3]{Tor23}}]{theorem}{thmFW}\label{thm:fw-sparsify}
    Every weakly sparse class of graphs of bounded flip-width has bounded expansion.
\end{restatable}

This reproves a result of \cite{Tor23}, with a more direct proof and with better bounds.
First, we recall the definition from \cite{Tor23}.

The \emph{flipper game} with radius $r\in\N\cup\set{\infty}$ 
and width $k\in\N$, $k\ge 1$, is played on a graph $G$ by two players, \emph{flipper} and \emph{runner}.
In each round $i$ of the game,  
 a $k$-flip $G_i$ of $G$ is  declared by the flipper, and the new position $v_i\in V(G)$ is selected by the runner, as follows. Initially, $G_0=G$ and $v_0$ is a vertex of $G$ chosen by the runner. In round $i>0$,
the flipper announces a new $k$-flip $G_i$ of $G$, that will be put into effect momentarily.
The runner, knowing $G_i$, moves to a new vertex $v_i$ by following a path of length at most $r$ from $v_{i-1}$ to $v_i$ in the \emph{previous} graph $G_{i-1}$. The game terminates when the runner is trapped, that is,  when $v_i$ is isolated in $G_{i}$.

\begin{definition}Fix $r\in\N\cup\set\infty$.
  The \emph{radius-$r$ flip-width} of a graph $G$,
  denoted $\fw_r(G)$,
  is the smallest number $k\in\N$ such that the flipper has a winning strategy in the flipper game of radius $r$ and width $k$ on $G$.
\end{definition}

\begin{definition}
  A class $\CC$ of graphs has \emph{bounded flip-width}
  if $\fw_r(\CC)<\infty$ for every $r\in\N$.
  More explicitly: for every radius $r\in\N$ there is some~$c_r\in\N$ such that $\fw_r(G)<c_r$ for all $G\in \CC$.     
\end{definition}


Fix a strategy of flipper in the flipper game of radius $3r$ and width $k$.
In each round of the game, by applying \Cref{thm:engine} to the flip $G'$ announced by flipper, we obtain a set $S$ of $kt^2$ vertices.
We may modify the strategy so that instead of playing $G'$, flipper 
plays the graph $G''$ obtained from $G$ by isolating the vertices in $S$.
Since distances in $G''$ are at most $3$ times larger than in $G'$,
this way, if the runner moves in $G''$ at speed $r$, this corresponds to a move in $G'$ at speed $3r$.
This observation yields the following lemma (a formal proof could use the notion of transferring strategies, as described in \cite[Sec. 8.2]{Tor23}).

\begin{lemma}\label{lem:fw-cw}
    Fix $r,t\in\N$.
    Let $G$ be a $K_{t,t}$-free graph
    and let $k=\fw_{3r}(G)$.
    Then flipper has a winning strategy in the flipper game of radius $r$ on $G$, such that in each round, flipper isolates $kt^2$ vertices of~$G$.
\end{lemma}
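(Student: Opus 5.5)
We transfer flipper's winning strategy in the radius-$3r$ game (width $k$) to a strategy in the radius-$r$ game, with runner positions carried over unchanged. Fix a winning strategy $\sigma$ for flipper in the flipper game of radius $3r$ and width $k$ on $G$. We define a strategy $\sigma'$ for the radius-$r$ game and play a shadow copy of the radius-$3r$ game in parallel. Whenever $\sigma$ announces a $k$-flip $G'_i$ in the shadow game, we apply \Cref{thm:engine} to the partition $\P_i$ underlying $G'_i$. Since \Cref{lem:sparsify-size} gives a strict inequality, this yields a set $S_i=\Sparsify(G,\P_i,t)$ with $|S_i|< kt^2$. Flipper then announces in the real game the graph $G''_i$ obtained from $G$ by isolating $S_i$. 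We use $G''_0=G'_0=G$.

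The key invariant is the following: every runner move that is legal in the real game is also legal in the shadow game. Suppose the runner moves from $v_{i-1}$ to $v_i$ along a path of length at most $r$ in $G''_{i-1}$. If $v_{i-1}\in S_{i-1}$, then $v_{i-1}$ is isolated in $G''_{i-1}$, so $v_i=v_{i-1}$ and the move is trivially legal in the shadow game. Otherwise every vertex on the path lies outside $S_{i-1}$, because the isolated vertices have no incident edges. So the path is a path in $G\setminus S_{i-1}$. By \Cref{thm:engine}, each of its edges has endpoints at distance at most $3$ in $G'_{i-1}$, so $\dist_{G'_{i-1}}(v_{i-1},v_i)\le 3r$. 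Hence the runner's move is legal in the shadow game of radius $3r$, and $\sigma$ can respond to it.

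Because $\sigma$ is winning, some round $i$ has $v_i$ isolated in $G'_i$. We must check that the runner is then also trapped in the real game, i.e.\ isolated in $G''_i$. If $v_i\in S_i$ this holds by construction. Otherwise, suppose $v_i$ had a neighbour $u$ in $G''_i$. Then $uv_i\in E(G\setminus S_i)$, so $\dist_{G'_i}(u,v_i)\le 3$ is finite. This contradicts $v_i$ being isolated in $G'_i$, since an isolated vertex has infinite distance to every other vertex and $u\neq v_i$. So the real game ends no later than the shadow game.

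The one point I expect to need care is formality rather than any bound. Flipper's choice in round $i$ must depend only on the history, and the runner's position is updated with respect to the \emph{previous} graph. Both are preserved because $S_i$ is computed from $G'_i$ alone. Writing this up properly might use the transfer-of-strategies formalism of \cite[Sec. 8.2]{Tor23}, but the argument above is the whole content. Finally, isolating fewer than $kt^2$ vertices is realizable as a flip of width depending on $k,t$. This is not needed for the statement as phrased, which only asserts that flipper isolates at most $kt^2$ vertices per round.
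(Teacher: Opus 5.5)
Your proposal is correct and takes the same approach as the paper's sketch. Like the paper, you replace each $k$-flip $G'$ announced by flipper's radius-$3r$ strategy with the graph obtained from $G$ by isolating the set $S$ from \Cref{thm:engine}, so that a runner move of length at most $r$ avoiding $S$ becomes a move of length at most $3r$ in $G'$. You also spell out details the paper defers to the strategy-transfer formalism of \cite{Tor23}: the shadow game stays consistent with flipper's original strategy, and isolation of the runner in $G'_i$ forces isolation in the real graph $G''_i$.
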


In the terminology of \cite[Sec. 4 and Sec. A.2]{Tor23}, the conclusion of \Cref{lem:fw-cw} implies:
\begin{equation}\label{eq:fw-cw}
\copw_{r}(G)\le 2\cdot \fw_{3r}(G)\cdot t^2,    
\end{equation}
where $\copw_r(G)$ is the \emph{radius-$r$ cop-width} of $G$ (see \Cref{sec:copw}).
It is shown in \cite[Cor 4.6]{Tor23}
that graph classes with $\copw_r(\CC)<\infty$ for all $r$ have bounded expansion. Thus, \eqref{eq:fw-cw} implies that every weakly sparse class of bounded flip-width has bounded expansion, proving \Cref{thm:fw-sparsify}.


\bigskip

Following the same principle as above, we may also derive results which have not yet been conceived in the literature. 
For example, Toruńczyk \cite{Tor23} defines classes of bounded \emph{blind flip-width} (a variant of flip-width where the flipper doesn't see the position of the runner), and an analogous notion involving vertex isolations can be readily defined, call it \emph{blind cop-width}. \Cref{thm:engine} then implies:

\begin{theorem}
    A weakly sparse class has bounded blind flip-width if and only if it has bounded blind cop-width.
\end{theorem}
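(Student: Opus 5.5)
The proof has two directions, and each is obtained by transferring strategies. Blind flip-width and blind cop-width are defined in the same way as the flipper game and its isolation variant, except that the flipper (or the cops) commits to a fixed sequence of moves in advance, without seeing the runner. I will use the radius-$r$ blind cop game: in each round a set of at most $k$ vertices is isolated, the runner moves at speed $r$ in the previous graph, and the flipper must trap every runner, i.e.\ win against every runner strategy at once.

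The easy direction is that bounded blind cop-width implies bounded blind flip-width, and it needs no sparsity. Isolating a set $X$ of at most $k$ vertices can be realised as a $\P$-flip for a partition $\P$ with $|\P|\le 2^{k}+k$. Take $\P$ to consist of the singletons of $X$ together with the classes of $V(G)\setminus X$ grouped by their neighbourhood in $X$. Flipping the pairs $\{x\}$ versus $P$ for every class $P$ that is complete to $x$ removes all edges at $x$. So every isolation move is a bounded flip, and the two games have identical rules otherwise. Hence a blind cop strategy with budget $k$ is a blind flip strategy with budget $2^k+k$. The same radius works, and trapping means the same thing.

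The main direction is the converse for $K_{t,t}$-free graphs. Suppose the flipper has a blind winning strategy for radius $3r$ and width $k$, that is, a fixed sequence of $k$-flips $G_1,G_2,\dots$ of $G$. For each round $i$, apply \Cref{thm:engine} to the partition underlying $G_i$. This gives a set $S_i$ with $|S_i|\le kt^2$, and we let $G''_i$ be $G$ with $S_i$ isolated. The key property is $\dist_{G''_i}(u,v)\ge\tfrac13\dist_{G_i}(u,v)$ for $u,v\notin S_i$. The construction of $S_i$ depends only on $G_i$ and not on the runner's position, so the modified strategy is still blind.

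Now take any runner play $v_0,v_1,\dots$ against the sequence $G''_i$. I claim the same positions form a legal play against $G_i$ at speed $3r$, as long as the runner avoids the isolated sets. The move from $v_{i-1}$ to $v_i$ is a path of length at most $r$ in $G''_{i-1}$. If this path has positive length, its vertices lie outside $S_{i-1}$, so it is a path in $G\setminus S_{i-1}$. By \Cref{thm:engine} every edge of this path has its endpoints at distance at most $3$ in $G_{i-1}$, so $\dist_{G_{i-1}}(v_{i-1},v_i)\le 3r$. The original strategy therefore eventually traps the runner in some $G_j$, i.e.\ $v_j$ is isolated in $G_j$.

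The main obstacle is turning "isolated in $G_j$" into "isolated in $G''_j$", since the inequality of \Cref{thm:engine} only bounds distances from below and does not transfer isolation directly. I would handle it as follows. If $v_j\in S_j$, then $v_j$ is isolated in $G''_j$ by construction. Otherwise, any $G$-neighbour $u\notin S_j$ of $v_j$ would be an edge of $G\setminus S_j$, and so would be at finite distance (at most $3$) from $v_j$ in $G_j$. This contradicts $v_j$ being isolated in $G_j$. Hence $v_j$ has no neighbours in $G''_j$, and the runner is trapped there too. Even if this step needed the runner to be trapped already at an earlier round, the game ends at the first trap, and the argument above applies at every round.

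This yields the bound: blind cop-width at radius $r$ is at most $\fw^{\mathrm{blind}}_{3r}(G)\cdot t^2$. The result then follows for classes by quantifying over all $r$.
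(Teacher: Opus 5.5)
Your proposal is correct and is essentially the argument the paper has in mind; the paper leaves it unproved, pointing to the same pattern as \Cref{lem:fw-cw}. The idea is to replace each announced flip by isolating the set $S$ from \Cref{thm:engine}, which depends only on the partition and so preserves blindness, and to read the runner's speed-$r$ moves as speed-$3r$ moves against the original strategy. You also fill in two details the paper leaves implicit: transferring the trapping condition via the edgewise form of \Cref{thm:engine}, and the easy converse, obtained by realising an isolation of $k$ vertices as a $(2^k+k)$-flip.
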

We refrain from defining those notions precisely, and from proving the theorem, since all that would occupy much more space than the proof, which is an immediate application of \Cref{thm:engine}, following the exact same pattern as illustrated in the proof of \Cref{thm:sparse-classes}.

\section{Separation-width}\label{sec:sw}
In this section, we introduce the \emph{separation-width} parameters,
and argue that they occupy a sweet spot between previously studied graph parameters.
We start with recalling the \emph{strong} and \emph{weak coloring numbers}, introduced by Kierstead and Yang \cite{generalizedcoloring}. We refer to \cite{SIEBERTZ2026100855} for a survey of those parameters, and their applications in algorithmic graph theory.

\paragraph{Strong and weak coloring numbers} Fix a graph $G$,  $r\in\N\cup\set{\infty}$, and total order $\le$ on $V(G)$. 
A vertex \(u\) is \emph{strongly \(r\)-reachable} from a vertex \(v\) if $u\leq v$ and there is a path of length at most \(r\) from \(v\) 
to \(u\) such that all inner vertices on that path are larger than~\(v\).
 \emph{Weak \(r\)-reachability} is defined in the same way, except that all inner vertices are only required to be larger than~\(u\).
Let \(\sreach_r^\le(v)\) be the set of vertices that are strongly \(r\)-reachable from \(v\) with respect to the ordering $\le$, and \(\wreach_r^\le(v)\) be the set of vertices that are weakly \(r\)-reachable from \(v\).
The \emph{strong \(r\)-coloring number} of \(G\), denoted $\scol_r(G)$, is the least number $k$ such that there is some 
total order $\le$ on $V(G)$ 
with $\left|\sreach_r^\le(v)\right|\le k$ for all $v\in V(G)$.
The \emph{weak \(r\)-coloring number} of \(G\), denoted $\wcol_r(G)$, is defined analogously, by replacing strong reachability with weak reachability.
It is known that for all $r\in\N$ and every graph $G$,
\begin{align}\label{eq:scol_wcol}
\scol_r(G) &\leq \wcol_r(G) \leq \scol_r(G)^r,\\
\scol_1(G) &= \degeneracy(G) + 1 = \wcol_1(G),\label{eq:scol-deg}\\
 \scol_\infty(G)&=\tw(G)+1.\label{eq:scol-tw}
\end{align}


\begin{lemma}[\cite{Zhu09}]\label{fact:be}
	A graph class $\C$ has bounded expansion if and only if $\scol_r(\C)<\infty$, for all~${r\in\N}$, equivalently, if and only if $\wcol_r(\C)<\infty$, for all~${r\in\N}$.
\end{lemma}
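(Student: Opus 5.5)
The equivalence between the two coloring-number conditions is immediate from \eqref{eq:scol_wcol}. Indeed, $\scol_r\le\wcol_r$ gives one direction. The bound $\wcol_r\le\scol_r^r$ gives the other, since boundedness is required for every fixed $r$. So it suffices to prove: $\C$ has bounded expansion if and only if $\wcol_r(\C)<\infty$ for all $r$ (equivalently $\scol_r(\C)<\infty$). I work with the shallow-minor definition of bounded expansion: for every $r$, the edge density $|E(H)|/|V(H)|$ of depth-$r$ minors $H$ of graphs in $\C$ is bounded by some $\nabla_r$.

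\emph{Weak coloring numbers bound shallow minor densities.} Fix an order $\le$ witnessing $\wcol_{4r+1}(G)\le c$. Let $H$ be a depth-$r$ minor of $G$ with branch sets $(X_a)_{a\in V(H)}$, each of radius at most $r$ around a center $c_a$. Let $m_a=\min X_a$. For an edge $ab$ of $H$, assume $m_a<m_b$, and orient it from $b$ to $a$. Consider the path from $m_b$ to $c_b$ inside $X_b$, then across the edge into $X_a$, then to $c_a$, then to $m_a$. Its length is at most $4r+1$, and all its vertices lie in $X_a\cup X_b$. Every vertex of $X_b$ is at least $m_b>m_a$, and every vertex of $X_a$ is at least $m_a$. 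Let $w$ be the minimum vertex on this path. Then $w$ is weakly $(4r+1)$-reachable from $m_b$. Also $w$ lies in $X_a$, so $w=m_a$ unless a smaller vertex precedes it; in any case $w\in X_a$. Since the $X_a$ are disjoint, the map sending each out-neighbour $a$ of $b$ to such a $w\in\wreach_{4r+1}(m_b)\cap X_a$ is injective. Hence every vertex of $H$ has out-degree at most $c$, and the density of $H$ is at most $c$. This direction is routine.

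\emph{Bounded expansion bounds strong coloring numbers.} I would go through the $r$-admissibility $\operatorname{adm}_r$, which is the main obstacle. For an order and a vertex $v$, define its back-connectivity as the maximum number of paths of length at most $r$ from $v$ to smaller vertices that are disjoint apart from $v$ and have inner vertices larger than $v$. I would first show $\scol_r\le 1+\operatorname{adm}_r(\operatorname{adm}_r-1)^{r-1}$, or a similar bound, by a branching argument over such path systems.

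The core step is then to bound $\operatorname{adm}_r(G)$ by a function of $\nabla_{r'}(G)$ for some $r'$ depending on $r$. I would build the order greedily from the back. At each step, place last a vertex whose back-connectivity into the remaining set is at most $p$. I would argue that such a vertex exists, for suitable $p$, by contradiction. If every remaining vertex has $p$ disjoint short paths into the rest, I would greedily select a maximal family of vertex-disjoint path systems. Each non-selected vertex then has many paths hitting selected ones. This should yield a depth-$O(r)$ topological minor (hence a shallow minor) of density on the order of $p$, contradicting the bound on $\nabla$ once $p$ is large. Carrying out this packing argument cleanly is the technical heart, and I would follow the approach of Zhu or of Dvo\v{r}\'ak for it.
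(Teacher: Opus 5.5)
The paper gives no proof of this lemma; it is quoted from \cite{Zhu09}. Judged on its own, your proposal is correct except in the hard direction.

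\textbf{What works.} The equivalence of the $\scol$ and $\wcol$ conditions via \eqref{eq:scol_wcol} is fine. The shallow-minor argument is the standard one, and it is correct. In fact the minimum of your walk is exactly $m_a$, since all its vertices are ${\ge}\,m_a$; this gives out-degree at most $c-1$.

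The reduction of $\scol_r$ to admissibility with the same order also works. Take a pruned tree of paths from $v$ to $\sreach_r(v)$ whose inner vertices are ${>}\,v$. For a vertex $x$ of this tree, truncate each tree path leaving $x$ at its first vertex smaller than $x$. This gives one back-path per child of $x$, plus one towards the root if $x\neq v$, and hence your bound.

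\textbf{The gap.} The missing piece is the step you defer, and it is exactly the nontrivial content of the cited theorem. You need that some vertex of the remaining set $W$ has back-connectivity into $W$ bounded in terms of $\nabla_{O(r)}(G)$. The packing you sketch does not deliver this, for three reasons.
\begin{enumerate}
\item Maximality of a family of vertex-disjoint fans only says that each fan of a non-selected vertex $w$ meets the union of the selected fans somewhere.
\item Even if you tune thresholds so that many of $w$'s paths hit, a selected fan has about $rp$ vertices. So all of the roughly $p$ hitting paths of $w$ may end in a single selected fan.
\item The hitting paths of different non-selected vertices may share vertices outside $W$.
\end{enumerate}
So you get neither many distinct branch sets per vertex nor disjoint connections to them, and no dense shallow (topological) minor follows.

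A working argument has to pack objects with bounded interior, for instance single paths of length at most $r$ between vertices of $W$, which form a sparse shallow topological minor on $W$. Then many disjoint hits force many distinct objects. It also needs a further argument to handle overlaps between the fans of different vertices. This is what \cite{Zhu09}, or Dvo\v{r}\'ak's bound on $r$-admissibility in terms of shallow topological minor densities, actually supplies.

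As written, your proposal rests on that citation for the direction ``bounded expansion $\Rightarrow$ bounded $\scol_r$'', just as the paper does. You should state this explicitly rather than present the sketch as a proof.
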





\paragraph{Separation-width}
We introduce a new family of graph parameters, dubbed \emph{separation-width},
and defined as follows.

\begin{definition}Fix $r\in\N\cup\set{\infty}$.
    Let $G$ be a graph, $S\subseteq V(G)$ a set of its vertices, and $v\in V(G)\setminus S$.
    The \emph{$r$-separator of $v$ in $S$}, denoted $\sep_r(v/S)$, is the set of those vertices in $S$ that can be reached from $v$ by a path of length at most~$r$, whose inner vertices are disjoint from $S$
    (this is called the $r$-projection of $v$ onto $S$ in \cite{kernelization}).
    
    The  \emph{radius-$r$ separation-width} of a graph $G$, denoted by $\smw_r(G)$,
is the least number $k$ such that there is a total order ${\le}$ on $V(G)$ such that $\left|\sep_r(v/S)\right|\le k$ for every $\le$-downward closed set $S\subseteq V(G)$ and for all $v\in V(G)\setminus S$.
\end{definition}

\subsection{Separation-width and generalized coloring numbers}
The following inequalities relate the  parameters defined above.
\lemScolSmw*
We remark that the 
second inequality in \eqref{eq:sandwich2} is a strengthening of an inequality implicit in \cite[Proof of Thm. 3]{dmtcs:14976}. 
Here, we provide a shorter, self-contained proof for completeness.
\begin{proof}
    Fix a total order $\le$ on $V(G)$.
    Let $v\in V(G)$, and let $S_{{<}v}$ be the set of vertices $w\in V(G)$ smaller than $v$ in the order.
    Observe that $$\sreach_r^{\le}(v)\ \subseteq\  \sep_r(v/S_{{<}v})\cup\{v\},$$
    which yields the first inequality in \eqref{eq:sandwich1}.
    On the other hand, for every $\le$-downward closed set $S\subseteq V(G)$
    and every $v\in V(G)\setminus S$,
    we have $$\sep_r(v/S)\cup\{v\}\ \subseteq\ \wreach_r^{\le}(v),$$
    which yields the second inequality in \eqref{eq:sandwich1}.
It remains to prove the second inequality in 
\eqref{eq:sandwich2}.
    For each $u\in \sep_r(v/S)$, let $\pi_u$ be a path of length at most $r$ from $v$ to $u$, whose inner vertices are disjoint from $S$.
    Let $m_u$ be the $\le$-smallest vertex of $V(\pi_u)-S = V(\pi_u) - \{ u \}$
    (such a vertex exists since $v\notin S$).
    Let $w$ be the $\le$-smallest among $\{m_u: u \in \sep_r(v/S)\}$. 
    In particular, there is a path $\pi$  from $w$ to $v$
    with inner vertices ${\ge} w$, and of length at most $r-1$.
    Then for 
    every $u\in \sep_r(v/S)$,
    the concatenation of $\pi$ and $\pi_u$ is a path from $w$ to $u$, whose inner vertices are ${\ge} w$, and of length at most $2r-1$.
    Then, by possibly shortcutting paths that visit $w$ multiple times, we can assume that all inner vertices are ${>} w$.
    It follows that  $$\sep_r(v/S)\cup\set{w}\subseteq \sreach_{2r-1}^{\le}(w),$$
    proving the second inequality in 
\eqref{eq:sandwich2}.
\end{proof}

\Cref{lem:scol_smw}, together with \eqref{eq:scol-deg} and \eqref{eq:scol-tw}, yields the following.
\corSMWDEGTW*
Furthermore, \Cref{lem:scol_smw}, together with \Cref{fact:be}, yield the following.
\corSMWBE*
Together with known results, \Cref{lem:scol_smw} shows that $\smw_r$ can be significantly smaller than $\wcol_r$.
For instance, for every $r,t\in \N$ there is a graph $G$ of tree-width $t$ with  $\wcol_r(G)= \binom{r+t}{t}=\Theta_r(t^r)$ \cite[Sec. 4]{DBLP:conf/wg/GroheKRSS15},
while by \Cref{cor:smw-tw}, $\smw_r(G)\le \smw_\infty(G)= t$.

\subsection{Separation-width and copwidth}\label{sec:copw}
We observe that separation-width can be used to provide monotone winning strategies in a variant of the Cops and Robber game in which the robber has bounded speed, considered by Toruńczyk  \cite{Tor23}. 
The \emph{radius-$r$ copwidth} of a graph $G$, denoted $\copw_r(G)$, is the least number of cops needed to catch a robber in the variation 
of the classic Cops and Robber where the robber has speed $r$.

More precisely, for $r,k\in\N$,
the \emph{Cops and Robber game of radius $r$ and width $r$} 
 proceeds in rounds, as follows.
Initially, the robber picks a vertex $w_0\in V(G)$ and the set of positions of the cops is $C_0\coloneqq \emptyset$.
In each round $i=1,2,\ldots$,
first the cops announce their set $C_i$ of new positions, with $|C_i|\le k$. Next, the robber moves from $w_{i-1}$ along any path of length at most $r$ avoiding the vertices $C_i\cap C_{i-1}$ occupied by the cops  which are not moving, ending in his next position $w_i$,
and the cops are now placed on the vertices of $C_i$. If $w_i\in C_i$, the cops win, otherwise, the game proceeds to the next round.
The \emph{radius-$r$ copwidth} of $G$ 
is the least number $k$ 
such that the cops have a winning strategy in the copwidth game of radius $r$ and width $k$.

The following result is implicit in the work
of Hickingbotham \cite{dmtcs:14976}.
Again, we provide a shorter, self-contained proof for completeness.

\begin{restatable}{theorem}{thmCopw}\label{lem:copw}For every graph $G$ and $r\in\N$,
$$\copw_r(G)\le \smw_{2r}(G)+1.$$    
Moreover, there is an enumeration $v_1,v_2,\ldots,v_n$ of the vertices of $G$ 
such that $\smw_{2r}(G)+1$ cops have a winning strategy where in round $i$,
the cops are constrained to vertices 
$v_1,\ldots,v_i$, and the robber is constrained to vertices $v_{i+1},\ldots,v_n$. In particular, this strategy is a monotone.
\end{restatable}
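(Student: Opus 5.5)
The plan is to turn an order witnessing $\smw_{2r}(G)$ directly into a cop strategy. Let $k\coloneqq\smw_{2r}(G)$ and fix an enumeration $v_1,\ldots,v_n$ of $V(G)$ witnessing it, so $|\sep_{2r}(w/S)|\le k$ for every prefix $S$ and every $w\notin S$. Write $S_i\coloneqq\{v_1,\ldots,v_i\}$, with $S_0=\emptyset$. In round $i+1$ the cops, knowing the current robber position $w_i$, announce
\[
C_{i+1}\coloneqq \sep_{2r}(w_i/S_i)\cup\{v_{i+1}\}.
\]
This set has size at most $k+1$ and lies inside $S_{i+1}$. So the cops are always constrained to $v_1,\ldots,v_{i+1}$, as the statement requires.

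The proof is an induction maintaining two invariants after round $i$:
\begin{enumerate}
\item[(a)] the robber's position satisfies $w_i\notin S_i$;
\item[(b)] $\sep_r(w_i/S_i)\subseteq C_i$.
\end{enumerate}
Both hold trivially for $i=0$. Once they are established, in round $n$ we have $S_n=V(G)$, so (a) cannot hold and the robber has been caught by round $n$. Monotonicity also follows, since the robber always sits in $V(G)\setminus S_i$ and the cops always sit in $S_i$.

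The first step is to show the robber cannot enter $S_i$ during round $i+1$. Any path from $w_i$ that enters $S_i$ has a first vertex in $S_i$. If the path has length at most $r$, that first vertex is in $\sep_r(w_i/S_i)$. This set is contained in $C_i$ by (b), and also in $C_{i+1}$ because $\sep_r(w_i/S_i)\subseteq\sep_{2r}(w_i/S_i)$. Hence it is occupied by a non-moving cop, and the robber's path is blocked. Consequently the robber's path $\rho$ from $w_i$ to $w_{i+1}$ avoids $S_i$. If $w_{i+1}=v_{i+1}$, the robber is caught because $v_{i+1}\in C_{i+1}$. Otherwise $w_{i+1}\notin S_{i+1}$, which gives (a) for round $i+1$.

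The key step, and the reason the radius is $2r$, is re-establishing (b), because the cops must commit to $C_{i+1}$ before seeing $w_{i+1}$. Take $s\in\sep_r(w_{i+1}/S_{i+1})$, witnessed by a path $\pi$ of length at most $r$ whose inner vertices avoid $S_{i+1}$.
\begin{enumerate}
\item[(1)] If $s=v_{i+1}$, then $s\in C_{i+1}$ directly.
\item[(2)] Otherwise $s\in S_i$. Consider the walk obtained by concatenating $\rho$ and $\pi$. It has length at most $2r$, and all of its vertices other than $s$ avoid $S_i$: $\rho$ avoids $S_i$ by the previous step, and $\pi$ avoids $S_{i+1}\supseteq S_i$ internally. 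Shortcutting this walk to a path yields $s\in\sep_{2r}(w_i/S_i)\subseteq C_{i+1}$.
\end{enumerate}
Using $S_i$ rather than $S_{i+1}$ in the definition of $C_{i+1}$ is what absorbs the case where $\rho$ passes through $v_{i+1}$. The only point needing care in the write-up is this bookkeeping of which prefix to project onto; everything else is routine.
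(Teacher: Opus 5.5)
Your proof is correct and follows the paper's argument: the same order, the same cop placement $C_{i+1}=\sep_{2r}(w_i/S_i)\cup\{v_{i+1}\}$, and the same concatenation of two consecutive robber moves into a path of length at most $2r$. The only difference is bookkeeping. You carry the explicit invariant $\sep_r(w_i/S_i)\subseteq C_i$ and derive from it that the robber's path avoids $S_i$. The paper instead proves path-avoidance directly by contradiction: the first vertex $s$ of $\pi_{i+1}$ in $S_i$ lies outside $C_i$, hence in $S_{i-1}$, hence in $\sep_{2r}(w_{i-1}/S_{i-1})\subseteq C_i$.
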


Here \emph{monotone} is in the sense that the robber never visits a vertex which was previously occupied by a cop.

\begin{proof}
Order the vertices $v_1<v_2<\ldots<v_n$ according to the order witnessing $\smw_{2r}(G)$. Let $S_i=\set{v_1,\ldots,v_{i}}$
for $i\in[n]$ and $S_0=\emptyset$.

Let $w_0\in V(G)$ be the initial position of the robber, and set $C_0=\emptyset$.
In round $i$, for $i=1,2,\ldots$, the cops are directed to each vertex in $\sep_{2r}(w_{i-1}/S_{i-1})$, and also to $v_{i}$;
 that is, $$C_i\coloneqq \sep_{2r}(w_{i-1}/S_{i-1})\cup\set{v_{i}}.$$
Next, the robber moves to his new position $w_i$
along a path $\pi_{i}$ of length at most $r$ from the previous position $w_{i-1}$, avoiding  $C_{i-1}\cap C_{i}$, and loses the game if $w_i\in C_i$.

We prove that as long as the game proceeds,  the following invariant is maintained:
\begin{equation}\label{eq:inv}
\textit{The path $\pi_i$  is disjoint from $S_{i-1}$.}
\end{equation}
This clearly holds for $i=1$ as $S_0=\emptyset$.
Suppose \eqref{eq:inv} holds in round $i\ge 1$.
In particular $w_i\notin S_{i-1}$.
If $w_i\in S_i = S_{i-1} \cup \{ v_i \}$ then $w_i=v_i\in C_i$ and  the robber is caught in round $i$, so assume that $w_i\notin S_i$.
We prove that $\pi_{i+1}$ is disjoint from $S_i$.

Otherwise,
let $s$ be the first vertex in $S_i$ visited by $\pi_{i+1}$; then $s\in \sep_{r}(w_i/S_i)\subseteq C_{i+1}$. As $\pi_{i+1}$ avoids $C_i\cap C_{i+1}$, we have $s\notin C_i$.
In particular, $s\neq v_i$, so $s\in S_{i}\setminus \set{v_i}=S_{i-1}$. 
 The concatenation of the paths $\pi_{i}$ and $\pi_{i+1}$ is a path of length at most $2r$, and $s$ is the first vertex in $S_{i-1}$ on this path by \eqref{eq:inv}.
 Hence, $s\in \sep_{2r}(w_{i-1}/S_{i-1})\subseteq C_{i}$, a contradiction. Hence, $\pi_{i+1}$ is disjoint from $S_i$, maintaining the invariant.

By \eqref{eq:inv}, for $i\in 1,2,\ldots$, either the robber loses within the first $i$ rounds, or 
$w_i\in V(G)\setminus S_i=\set{v_{i+1},\ldots,v_n}$. In particular, the robber must lose within $n$ rounds.
\end{proof}


\section{Merge-width in sparse graphs}\label{sec:mw-smw}

In this section, we reprove the following result of Dreier and Toruń\-czyk.

\thmMW*

As opposed to \cite{DT25}, our proof is direct and provides explicit bounds, as stated in the upcoming \Cref{lem:bounded_smw}. We first recall the definition of merge-width.

\subsection{Merge-width}
We refer the reader to \cite[Sec. 1]{DT25} for the original definition of the merge-width parameters via construction sequences,
and for examples illustrating this notion.
In this paper, it  will be more convenient to use the upcoming equivalent definition of merge-width through restrained flip sequences, following \cite[Def. 7.9]{DT25}.

\begin{definition}Let $G$ be a graph with vertices $V$.
	A \emph{restrained flip sequence} for $G$ is a sequence
	\begin{align}\label{eq:mono-fw}
		(\P_1,R_1,G_1),\ldots,(\P_m,R_m,G_m)    
	\end{align}
	such that:
	\begin{itemize}
		\item $\P_1,\ldots,\P_m$ is a refining sequence of partitions, starting with the  partition $\P_1$ of $V$ with one part, and ending with the partition $\P_m$ into singletons,
		\item the graph $G_i$ is a $\P_{i}$-flip of $G$, for $i\in [m]$,
		\item ${V\choose 2}= R_1\supseteq \cdots \supseteq R_m=\emptyset$, and
		\item  $E(G_i)\subseteq R_i$ for $i\in[m]$.
	\end{itemize}
\end{definition}

Fix $r\in\N$.
For a set $R\subseteq {V\choose 2}$ and partition $\P$ of $V$,
we say a part \(A \in \P\) is \emph{\(r\)-reachable} from a vertex \(v\) if there is a path of length at most $r$ in the graph $G=(V,R)$ from $v$ to some vertex in~$A$.
The \emph{radius-$r$ width} of $(\P,R)$  is 
\[
\max_{v \in V}\ \Big|\bigl\{ A \in \P \mid  A \text{ is \(r\)-reachable from } v \bigr\}\Big|.
\]

The \emph{radius-$r$ width} of the restrained flip sequence is 
the maximum, over $i\in[m-1]$,
of the radius-$r$ width of $(\P_{i+1},R_{i})$.

\begin{definition}
	The \emph{radius-$r$ merge-width} of a graph $G$, denoted $\mw_r(G)$, is the least radius-$r$ width of a restrained flip sequence of  $G$.
	A graph class $\C$ has \emph{bounded merge-width} if for all $r\in \N$ we have\footnote{For a graph parameter $f$ and graph class $\C$, by $f(\C)$ we denote $\sup_{G\in\C}f(G)$.}  
	${\mw_r(\C)<\infty}$.
\end{definition}

\begin{observation} \label{obs:proper_sequence}
	Given a graph $G$ and a restrained flip sequence of $G$ of radius-$r$ width $c_r$ for $r \in \mathbb{N}\cup \{\infty\}$, then there is a restrained flip sequence $(\P_1,R_1,G_1),\ldots,(\P_n,R_n,G_n) $ of same radius-$r$ width for every $r \in \mathbb{N}\cup \{\infty\}$, such that $n = \left|V(G)\right|$ and $\left|\P'_{i+1}\right| = \left|\P'_i\right| + 1$.
\end{observation}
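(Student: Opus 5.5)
The plan is to start from the given restrained flip sequence $(\P_1,R_1,G_1),\ldots,(\P_m,R_m,G_m)$ and simply remove repetitions, then refine the coarsening steps one part at a time. Both operations have to keep the four defining conditions and must not increase the radius-$r$ width for any $r$.

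First I remove consecutive repetitions. If $\P_{i+1}=\P_i$, I delete the triple $(\P_{i+1},R_{i+1},G_{i+1})$. It is cleaner, however, to keep the \emph{last} triple among each maximal block of triples sharing the same partition, because its $R$ is the smallest. Since $R$ is decreasing and $E(G_j)\subseteq R_j$, all conditions survive: the kept $G$ is a flip of the right partition, and the $R$'s are still nested. The only exception is $R_1=\binom V2$, which I keep as is. For this I instead keep the first triple's $R$ but the block's last $G$: that $G$ satisfies $E(G)\subseteq R_{\text{last}}\subseteq R_1$, so the pairing is still valid. Widths: the new pairs $(\P_{i+1},R_i)$ pair a partition with an $R$ that is a superset of one of the $R$'s it was originally paired with, or equal to it. I have to check this direction carefully. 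Choosing, for each block, the $R$ of the last triple of the \emph{previous} block guarantees that each new pair coincides with an original pair $(\P_{j+1},R_j)$. So the width for every $r$ is unchanged or smaller.

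Next I split big refinement steps. Suppose $\P_{i+1}$ refines $\P_i$ with $|\P_{i+1}|>|\P_i|+1$. I insert intermediate partitions $\Q_1,\ldots,\Q_s$ strictly between them, each adding one part, so that $\P_i\succ\Q_1\succ\cdots\succ\Q_s\succ\P_{i+1}$. Each intermediate triple is $(\Q_j,R_i,G_i)$. This is valid because $G_i$ is a $\P_i$-flip and hence a $\Q_j$-flip, since flipping a pair of unions of parts of $\Q_j$ is a combination of flips of pairs in $\Q_j$. Also $E(G_i)\subseteq R_i$, and the $R$'s stay nested. The new width pairs are $(\Q_1,R_i),\ldots,(\Q_s,R_i),(\P_{i+1},R_i)$. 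Since each $\Q_j$ is coarser than $\P_{i+1}$, the number of parts of $\Q_j$ reachable from $v$ in $(V,R_i)$ is at most the number of parts of $\P_{i+1}$ reachable. So the width is bounded by the original pair $(\P_{i+1},R_i)$, for all $r$ simultaneously.

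After both steps the partition sizes go $1,2,\ldots,|V|$, so the sequence has length $n=|V(G)|$. The only mildly delicate point is the bookkeeping in the deduplication step, making sure every new width pair is dominated by an old one. The refinement monotonicity argument above is the key observation.
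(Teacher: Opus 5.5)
Your plan is the paper's. You collapse each run of equal partitions to its last triple, so each new pair $(\P_{b+1},R_b)$ is an original pair. You then insert intermediate partitions $\Q$ as triples $(\Q,R_i,G_i)$, using that a coarsening of $\P_{i+1}$ has no more $r$-reachable parts (a point the paper leaves implicit).

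\textbf{The gap is your treatment of the first run.} Suppose $\P_1=\cdots=\P_b\neq\P_{b+1}$ with $b>1$. You keep $(\P_1,R_1,G_b)$ with $R_1=\binom V2$, so the next pair is $(\P_{b+1},\binom V2)$. This is not an original pair. Since every vertex is at distance $1$ from every other vertex in $(V,\binom V2)$, its radius-$r$ width is $|\P_{b+1}|$ for all $r\ge 1$. Your insertion step then also pairs every intermediate partition between $\P_1$ and $\P_{b+1}$ with $\binom V2$.

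For instance, take a path $G$ and the sequence $(\{V\},\binom V2,G),(\{V\},E(G),G),(\P_3,\emptyset,H)$, with $\P_3$ the singleton partition and $H$ edgeless. Its width is at most $2r+1$, but your output has width $n$. So your claim that every new pair coincides with an original pair fails exactly here.

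\textbf{The fix.} Keep $\binom V2$ only in the very first triple. Give every triple inserted between $\P_1$ and $\P_{b+1}$ the set $R_b$ and the graph $G_b$; this graph is a $\{V\}$-flip, hence a flip for any partition. Then the only new pair is $(\Q_2,\binom V2)$ with $|\Q_2|=2$, which has width $2$. All other pairs are dominated by $(\P_{b+1},R_b)$, so the width becomes $\max(c_r,2)$.

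\textbf{Some loss is unavoidable.} Take $G$ edgeless with the sequence $(\{V\},\binom V2,G),(\{V\},\emptyset,G),(\P_3,\emptyset,G)$. Its width is $1$, while any sequence with $|\P_2|=2$ has width at least $2$ for $r\ge 1$.

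The paper's proof has the same blind spot: for $i=1$ it drops $(\P_1,R_1,G_1)$, which violates $R_1=\binom V2$. However, its construction becomes correct up to $\max(c_r,2)$ just by resetting the first $R$ to $\binom V2$, whereas yours does not.
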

\begin{proof}
	Let $(\P_1,R_1,G_1),\ldots,(\P_m,R_m,G_m)$  be a restrained flip sequence of $G$. First note that if $\P_i=\P_{i+1}$ for some $1\le i<m$, then we can 
	drop the triplet $(\P_i,R_i, G_i)$ in the restrained flip sequence, obtaining a sequence of same radius-$r$ width  (this is because the pair $(\P_{i},R_{i-1})$ 
	is equal to the pair $(\P_{i+1},R_{i-1})$ and hence they have the same radius-$r$ merge-width).
	It follows that we may assume, without loss of generality, that the partitions $\P_1,\ldots,\P_m$ are pairwise distinct. 	
	Next, observe that if $\left|\P'_{i+1}\right| > \left|\P'_i\right| + 1$, then there is a partition $\P$ which is strictly finer than $\P_i$ and strictly coarser than $\P_{i+1}$. In this case we may insert the triplet $(\P,R_i, G_i)$ into the restrained flip sequence, without increasing its radius-$r$ merge-width.
\end{proof}

\subsection{Forward direction}

The forward implication of \Cref{thm:mw-sparsify} follows by \cite[Lem. 7.5]{DT25},
which we reformulate here in terms of separation-width.

\begin{lemma}[{\cite[Lem. 7.5]{DT25}}]\label{lem:smw-mw}
    Fix $r\in\N$. Let $G$ be a graph and $k= \smw_{r+1}(G)$. 
    Then $$\mw_r(G)\le 2^{k + 1} + 1.$$
\end{lemma}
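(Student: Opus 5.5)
The plan is to read a restrained flip sequence directly off an order witnessing $\smw_{r+1}(G)=k$. Let $v_1<\dots<v_n$ be such an order and put $S_j=\{v_1,\dots,v_j\}$ and $U_j=V\setminus S_j$. The idea is that $S_j$ is the \emph{processed} part: its vertices are singletons and are isolated in the flipped graph. The unprocessed part $U_j$ is grouped coarsely, according to the neighbourhoods of its vertices in $S_j$.

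\textbf{Construction.} For $j=0,1,\dots,n$ let $\P^{(j)}$ consist of the singletons $\{s\}$ for $s\in S_j$, together with the nonempty classes
\[U_j^T:=\{u\in U_j : N_G(u)\cap S_j=T\},\qquad T\subseteq S_j.\]
Set $R^{(j)}:=E(G[U_j])$, and let $G^{(j)}$ be $G$ with all edges incident to $S_j$ removed, so $E(G^{(j)})=R^{(j)}$. The triple for index $j$ is $(\P^{(j)},R^{(j)},G^{(j)})$, taken with $j=0,\dots,n$ (in the paper's indexing, $\P_i=\P^{(i-1)}$).

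We check the required properties in turn.
\begin{enumerate}
\item $\P^{(0)}=\{V\}$, $\P^{(n)}$ is the partition into singletons, and $R^{(n)}=\emptyset$.
\item The pairs $R^{(j)}$ decrease as $j$ grows.
\item The partitions refine: moving $v_{j+1}$ into $S$ splits each class according to adjacency to $v_{j+1}$, and $v_{j+1}$ becomes a singleton. Hence $|\P^{(j+1)}|\ge|\P^{(j)}|$, and equal consecutive partitions can be dropped as in \Cref{obs:proper_sequence}.
\item $G^{(j)}$ is a $\P^{(j)}$-flip of $G$. Edges inside $S_j$ join singleton parts, so each can be flipped individually. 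For $s\in S_j$ and a class $U_j^T$, this class is complete to $s$ if $s\in T$ and anticomplete otherwise, so flipping the pair $(\{s\},U_j^T)$ exactly when $s\in T$ deletes all edges between $S_j$ and $U_j$. Edges inside $U_j$ are untouched.
\end{enumerate}
The initial condition $R_1=\binom V2$ in the paper's convention is a harmless bookkeeping issue: an extra first triple with the trivial partition (or $R=\binom V2$ at the very start) does not contribute to the width, since only pairs $(\P_{i+1},R_i)$ are measured.

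\textbf{Width bound.} This is the main step. We bound the radius-$r$ width of $(\P^{(j+1)},R^{(j)})$. Let $s=v_{j+1}$ and $S'=S_j\cup\{s\}$.
\begin{enumerate}
\item If $v\in S_j$, then $v$ is isolated in $(V,R^{(j)})$ and reaches only its own part.
\item If $v\in U_j$, the vertices reachable within distance $r$ in $(V,R^{(j)})=(V,E(G[U_j]))\cup\text{isolated}$ are exactly the vertices $x$ with $\dist_{G[U_j]}(v,x)\le r$.
\item For each such $x$, every neighbour of $x$ in $S_j$ lies in $\sep_{r+1}(v/S_j)$. This is witnessed by the path through $x$, of length at most $r+1$, whose inner vertices lie in $U_j$. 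The latter set has size at most $k$, because $S_j$ is downward closed.
\item The part of $\P^{(j+1)}$ containing $x$ is either $\{s\}$ (if $x=s$) or $U_{j+1}^{T}$ with $T=N(x)\cap S'\subseteq \sep_{r+1}(v/S_j)\cup\{s\}$.
\end{enumerate}
So at most $2^{k+1}$ classes plus the singleton $\{s\}$ are reachable, giving width at most $2^{k+1}+1$.

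The only delicate point I anticipate is the interplay between $R^{(j)}$, which is defined relative to the old set $S_j$, and the partition $\P^{(j+1)}$, which is defined relative to the new set $S'$. This mismatch is exactly what costs the extra factor $2$ (the new vertex $s$ may or may not be in $T$) and the additive $+1$ (the singleton $\{s\}$). It is handled by measuring $\sep_{r+1}$ with respect to $S_j$, not $S'$, as above.
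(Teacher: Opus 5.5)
Your proof is correct and follows the paper's own argument. You use the same restrained flip sequence: the prefix $v_1,\dots,v_{j}$ is isolated and kept as singletons, and the remaining vertices are grouped by their neighbourhood in the prefix. The count is also the same, namely parts controlled by $\mathrm{sep}_{r+1}(v/S_j)$, one extra bit for $v_{j+1}$, and the singleton $\{v_{j+1}\}$. Your bookkeeping is slightly more careful than the paper's sketch, which glosses over $R_1=\binom{V}{2}$, the case $v\in S_j$, and the check that each graph is a $\mathcal{P}$-flip of $G$.
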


Indeed, if $\CC$ has bounded expansion then $\CC$ is weakly sparse,
and $\smw_{r}(\CC)<\infty$ for all $r\in\N$ by \Cref{cor:smw-be}. Consequently, by the bound stated in \Cref{lem:smw-mw}, also $\mw_r(\CC)<\infty$ for all $r\in\N$, so $\CC$ has bounded merge-width.

We sketch the proof of \Cref{lem:smw-mw} below as it illustrates the tight connection between separation-width and merge-width.

\begin{proof}[{Proof sketch of \Cref{lem:smw-mw}}]    
    Let $\le$ be a total order witnessing that $\smw_{r+1}(G)= k$.
    Order the vertices $v_1<v_2<\ldots<v_n$ according to this order.
    For $i=1,\ldots,n$, denote $v_{{<}i}\coloneqq\set{v_1,\ldots,v_{i-1}}$, and 
    let $G_i$ be the graph obtained from $G$ by isolating the vertices in $v_{{<}i}$.
    Let $\mathcal P_i$ be the partition of $V(G)$ which partitions $v_{{<}i}$ into singletons, while the remaining vertices in $\set{v_{i},\ldots,v_n}$ are partitioned according to their neighborhood in $v_{{<}i}$. Then $G_i$ is a $\mathcal P_i$-flip of $G$.
    Let $R_i=E(G_i)$.
    Then  $$(\mathcal P_1,R_1,G_1),\ldots, (\mathcal P_n,R_n,G_n)$$ forms a restrained flip sequence.
    It is not difficult to argue
     that 
    its radius-$r$ width is bounded by 
    $2^{k + 1} + 1$; a more precise bound is stated in \Cref{lem:mw-smw} below.
    Namely, for a fixed vertex $v\in V(G)$, if a part of $A\in \P_{i}$ is reachable from $v$ by a path of length at most $r$ in $G_{i}$, then 
     $$N_G(A)\cap v_{{<}i}\ \subseteq\ \sep_{r+1}(v/v_{{<}i}).$$ As $A\in \P_i$ is uniquely determined by $N_G(A)\cap v_{{<}i}$
     and $\left|\sep_{r+1}(v/v_{{<}i})\right|\le k$, it follows that there are at most $2^k$ many such parts $A$. Observe that for such part $A$, if $v_i \not \in A$, there are at most $2$ parts of $\P_{i+1}$ contained in $A$: $A \cap N(v_i)$ and $A \setminus (N(v_i) \cup \{v_i\})$,
    while if $v_i \in A$, there are at most $3$ such parts (including $\{v_i\} \in \P_{i+1}$). Therefore, there are at most $2^{k+1} + 1$ parts of $\P_{i + 1}$ that are $r$-reachable from $v$ in $G_i$.
\end{proof}

We remark that the proof of \cite[Lemma 7.19]{DT25} actually shows the following bound, improving the bound in \Cref{lem:smw-mw}, while only stating the weaker bound in which $\smw$ is replaced by $\wcol$.
\begin{lemma}\label{lem:mw-smw}
    For every $r\in\N$ and graph $G$,
    we have $$\mw_r(G)\le 2 \cdot \pi_G(\smw_{r+1}(G)) + 1.$$ 
\end{lemma}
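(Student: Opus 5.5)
We reuse the restrained flip sequence from the proof sketch of \Cref{lem:smw-mw} and sharpen only the counting step. Here $\pi_G(k)$ denotes the neighborhood complexity of $G$, that is, the maximum over sets $X\subseteq V(G)$ with $|X|\le k$ of the number of distinct traces $N_G(a)\cap X$ for $a\in V(G)$. This function is nondecreasing in $k$.

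\emph{Construction.} Let $\le$ be an order witnessing $k=\smw_{r+1}(G)$, and let $v_1<\dots<v_n$ be the vertices in this order. Let $G_i$ be $G$ with the vertices of $v_{<i}$ isolated, and set $R_i=E(G_i)$. Let $\P_i$ split $v_{<i}$ into singletons and partition the remaining vertices by their traces $N_G(\cdot)\cap v_{<i}$. As in the sketch, $G_i$ is a $\P_i$-flip of $G$, the $R_i$ are decreasing, and the partitions refine each other. So this is a restrained flip sequence, and we must bound the number of parts of $\P_{i+1}$ that are $r$-reachable from a fixed $v$ in the graph $(V,R_i)=G_i$.

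\emph{Counting.} If $v\in v_{<i}$, then $v$ is isolated in $G_i$, so only the part of $\P_{i+1}$ containing $v$ is reachable, giving at most $1$. Otherwise put $X:=\sep_{r+1}(v/v_{<i})$, so $|X|\le k$. Let $a$ be reachable from $v$ by a path of length at most $r$ in $G_i$. Since the vertices of $v_{<i}$ are isolated in $G_i$ and $v\notin v_{<i}$, this path avoids $v_{<i}$ entirely, and it is also a path in $G$. Hence every $u\in N_G(a)\cap v_{<i}$ is reached from $v$ by a path of length at most $r+1$ whose inner vertices lie outside $v_{<i}$, so $u\in X$. Thus $N_G(a)\cap v_{<i}=N_G(a)\cap X$.

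It follows that reachable vertices lie in parts of $\P_i$ outside $v_{<i}$ (the singletons of $v_{<i}$ are unreachable), and each such part is determined by a trace on $X$. Consequently at most $\pi_G(k)$ parts of $\P_i$ are reachable. Passing to $\P_{i+1}$, each such part $A$ splits into at most $A\cap N(v_i)$ and $A\setminus(N(v_i)\cup\{v_i\})$, plus the singleton $\{v_i\}$ for the one part containing $v_i$. This gives at most $2\pi_G(k)+1$ reachable parts of $\P_{i+1}$.

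\emph{Main obstacle.} The delicate point is the claim that the trace of every reachable vertex on $v_{<i}$ already lives inside the small set $X$, and that nothing is lost at the endpoints. The case $a=v$ (path of length $0$) is fine. Paths through $v_{<i}$ cannot occur because those vertices are isolated in $G_i$. Finally, the radius shifts from $r$ to $r+1$ because of the final edge into $v_{<i}$. Once this is in place, the bound follows from the monotonicity of $\pi_G$.
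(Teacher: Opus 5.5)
Your proposal is correct and is essentially the paper's argument: it uses the same sequence as the sketch of \Cref{lem:smw-mw}, with the crude $2^k$ count replaced by the observation that every vertex $r$-reachable from $v$ in $G_i$ has its trace on $v_{<i}$ inside $\sep_{r+1}(v/v_{<i})$. Two harmless details: your $\pi_G$ (distinct traces over all $a\in V(G)$) is bounded above by the paper's shatter function, so your inequality implies the stated one; and, exactly as in the paper's sketch, you should take $R_1=\binom{V}{2}$ rather than $E(G)$, which only adds the step $(\P_2,R_1)$ of width at most $|\P_2|\le 3\le 2\pi_G(k)+1$.
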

Here, $\pi_G$ is the \emph{shatter function} of $G$, defined in \cite{DT25} by $$\pi_G(m)\coloneqq \max_{A\subseteq V(G),|A|\le m}\Big(\left|A\right|+\left|\{N(v)\cap A: v\in V(G)\setminus A\}\right|\Big).$$
Clearly, for every graph $G$ we have that $\pi_G(m)\le m+2^m$, 
so \Cref{lem:mw-smw} implies in particular that $\mw_r(G)$ is bounded in terms of $\smw_{r+1}(G)$ in all graphs.
Furthermore, if $G$ is $K_{t,t}$-free then $\pi_G(m)\le \O(m^t)$, see e.g. \cite[Lemma B.3]{Tor23}.
Moreover, for classes $\C$ of bounded expansion, $\pi_G(m) \le \O_\C(m)$ for all $G\in\C$ \cite{REIDL2019152}.
We obtain the following (the constants hidden in the notation $\O_\CC$ depend on  $\CC$).

 \begin{restatable}{corollary}{corSMWMWBE}\label{cor:smw}
    For every graph class $\CC$ of bounded expansion, $r\ge 1$, and $G\in\CC$
    \[\mw_{r}(G) \le \O_\CC(\smw_{r+1}(G)).\]
\end{restatable}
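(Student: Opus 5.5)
The corollary follows by combining \Cref{lem:mw-smw} with a linear bound on the shatter function for bounded expansion classes.

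\textbf{Step 1.} Fix $G\in\CC$ and $r\ge 1$, and set $k\coloneqq\smw_{r+1}(G)$. By \Cref{cor:smw-be}, $\smw_{r+1}(\CC)<\infty$, so $k$ is finite. \Cref{lem:mw-smw} then gives
\[
\mw_r(G)\le 2\cdot \pi_G(k)+1 .
\]

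\textbf{Step 2.} I invoke the result of \cite{REIDL2019152}: for every class $\CC$ of bounded expansion there is a constant $c_\CC$ with $\pi_G(m)\le c_\CC\cdot m$ for all $G\in\CC$ and all $m\ge 1$. This bound is on the number of distinct neighbourhood traces on a set $A$ of size at most $m$; the extra summand $|A|\le m$ in the definition of $\pi_G$ only increases $c_\CC$ by one. Substituting into Step 1 gives $\mw_r(G)\le 2c_\CC k+1$.

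\textbf{Step 3.} If $k\ge 1$, then $2c_\CC k+1\le (2c_\CC+1)k$, which is $\O_\CC(k)$. If $k=0$, then $G$ is edgeless, because $\smw_{r+1}(G)\ge\smw_1(G)=\degeneracy(G)$. In that case $\mw_r(G)$ is bounded by an absolute constant, which is still compatible with the $\O_\CC$ notation read as allowing an additive constant.

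\textbf{Main obstacle.} The only nontrivial ingredient is the linear neighbourhood-complexity bound of \cite{REIDL2019152}. It is used as a black box, and the one thing to check is that it holds uniformly over the class with the same constant for every $m$. This is exactly how it is stated there: the constant depends only on $\CC$ (more precisely, on $\nabla_1$ or $\wcol$-type parameters of $\CC$). Everything else is direct substitution.
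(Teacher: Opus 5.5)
Your proposal is correct and matches the paper's argument: it plugs the linear shatter-function bound $\pi_G(m)\le \O_\CC(m)$ for classes of bounded expansion \cite{REIDL2019152} into \Cref{lem:mw-smw}. Your handling of the extra summand $|A|$ and of the case $k=0$ is fine. The appeal to \Cref{cor:smw-be} is unnecessary, since $\smw_{r+1}(G)$ is finite for every finite graph. The detour through degeneracy is also unnecessary, since $\pi_G(0)\le 1$ already gives $\mw_r(G)\le 3$ directly.
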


\subsection{Backward direction}
The backward implication of \Cref{thm:mw-sparsify} by combining \Cref{cor:smw-be} with the following lemma, which is our main technical contribution.

\begin{restatable}{lemma}{lemSmw} \label{lem:bounded_smw}
    Fix $d,t,r\in\N$ with $r\ge 1$.
    Let $G$ be a $K_{t,t}$-free graph with $\mw_{3r+1}(G) \leq d$. Then 
    \[\smw_r(G) \leq \O(d^2t^3).\]
\end{restatable}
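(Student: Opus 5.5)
We follow the strategy outlined in the introduction. By \Cref{obs:proper_sequence}, fix a restrained flip sequence $(\P_1,R_1,G_1),\ldots,(\P_n,R_n,G_n)$ of $G$ with $|\P_i|=i$ and radius-$(3r+1)$ width at most $d$. For each $i$ set $S_i\coloneqq\Sparsify(G,\P_i,t)$. We first make the sets nested: since $\P_{i+1}$ refines $\P_i$ by splitting one part, \Cref{lem:sparsify_refinement} gives $|S_{i+1}\setminus S_i|<2t^2$. We therefore replace $S_i$ by $T_i\coloneqq S_1\cup\cdots\cup S_i$; then $|T_i\setminus T_{i-1}|<2t^2$ and $T_n=V(G)$, since singleton parts are $t$-small. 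We order $V(G)$ by listing $T_1$, then $T_2\setminus T_1$, and so on, in arbitrary order within each block. Every downward closed set $S$ satisfies $T_{i-1}\subseteq S\subseteq T_i$ for some $i$, and the difference $S\setminus T_{i-1}$ contributes fewer than $2t^2$ vertices to any separator. It therefore suffices to bound $|\sep_r(v/T_{i-1})|$ for $v\notin T_{i-1}$, plus an additive $O(t^2)$.

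The key geometric input is \Cref{lem:sparsify} applied to $G_{i-1}$, a $\P_{i-1}$-flip. Every edge of $G$ avoiding $S_{i-1}$ is realised by a path of length at most $3$ in $G_{i-1}$, so its edges lie in $E(G_{i-1})\subseteq R_{i-1}$. Let $u\in\sep_r(v/T_{i-1})$, witnessed by a path $v=x_0,\ldots,x_\ell=u$ with $\ell\le r$ and $x_0,\ldots,x_{\ell-1}\notin T_{i-1}\supseteq S_{i-1}$. Then $x_0,\ldots,x_{\ell-1}$ are joined by an $R_{i-1}$-path of length at most $3(r-1)$. The part of $\P_i$ containing $u$ is therefore reachable from $v$ in $(V,R_{i-1})$ within $3r+1$ steps, provided the last edge $x_{\ell-1}u$ can be handled. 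This is the point where the radius $3r+1$ is used. The last edge is the main obstacle, because $u$ lies in $T_{i-1}$ and may belong to $S_j$ for some $j\le i-1$. In that case \Cref{lem:sparsify} does not directly control the edge $x_{\ell-1}u$ in $G_{i-1}$.

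To handle the last edge, we split according to why $u\in T_{i-1}$. Take $j\le i-1$ with $u\in S_j$, and let $x\coloneqq x_{\ell-1}$. Note that $x\notin S_j$.

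\textbf{Case (a): $u$ lies in a $t$-small part of $\P_j$.} We charge $u$ to the part $A\in\P_j$ containing it.

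\textbf{Case (b): $u$ is $t$-complete to a $t$-big part $B\in\P_j$.} If the pair of parts of $x$ and $u$ is not flipped in $G_{j}$, the edge $xu$ lies in $R_j\supseteq R_{i-1}$. If it is flipped, we exploit the fact that $x$ is not $t$-complete, together with $K_{t,t}$-freeness, as in the proof of \Cref{lem:sparsify}, to find an $R_{i-1}$-neighbour of $x$ inside the part of $u$. In either case the part of $u$ is reached within one extra step.

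In both cases, the parts reachable from $v$ at radius $3r+1$ number at most $d$. This holds in $(\P_i,R_{i-1})$ and, by monotonicity of $R$ and refinement, also in the coarser $\P_j$. Each such part contributes at most $O(t^2)$ vertices of $T_{i-1}$: fewer than $t^2$ if the part is small, and fewer than $t$ complete vertices per big part by \Cref{lem:almost_complete}. We still have to control which index $j$ is used. For this we intend to charge each $u$ to the step $j$ at which $u$ entered $T$, and to count the reachable pairs ``part of $\P_{j+1}$ split at step $j$''. This should give $O(d)$ relevant splitting steps, each contributing $O(dt^2)$ through the $d$ reachable parts, times $O(t)$ from the complete-vertex count. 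The result is the target bound $O(d^2t^3)$.

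I expect the bookkeeping in this last step to be the delicate part: correctly relating reachability in $R_{i-1}$ to the earlier partitions $\P_j$, and bounding the number of distinct entry times $j$ by $O(d)$, which produces one factor of $d$.
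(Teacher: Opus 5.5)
Your reduction is the paper's: the block order $T_1,T_2\setminus T_1,\dots$, the additive $2t^2$ term, and \Cref{lem:sparsify} applied to the witness path to place $x=x_{\ell-1}$ in $B^{3r-3}_{G_{i-1}}(v)$. The counting after it, however, has a genuine gap.

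\textbf{Reaching the part of $u$ is not enough.} Showing that the part containing $u$ is reachable does not bound $|\sep_r(v/T_{i-1})|$. Nothing you say limits how many vertices of $T_{i-1}$ a single reachable $t$-big part contains. Each of them may be $t$-complete to a \emph{different} earlier big part, and \Cref{lem:almost_complete} only bounds the vertices complete to one \emph{fixed} part $B$. To use that lemma you must show that each reason $B$ contains a reachable part, and that distinct reasons give distinct reachable parts.

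The paper does this (in your indexing) for $u\in B^{3r}_{G_{i-1}}(v)$:
\begin{itemize}
\item It picks a \emph{latest} reason $Q_u$ (maximal index) and observes that latest reasons are pairwise equal or disjoint.
\item Parts $P$ containing fewer than $t^2$ such $u$ are trivial, since at most $d$ parts meet $B^{3r}_{G_{i-1}}(v)$.
\item In a part $P$ with at least $t^2$ such $u$, it picks another such $w\in P$ with at least $t$ non-neighbours in $Q_u$, and some $y\in Q_u$ with $uy\in E(G)$ and $wy\notin E(G)$.
\item Since $u,w$ share a part, one of $uy,wy$ is an edge of $G_{i-1}$, so a part inside $Q_u$ is $(3r+1)$-reachable.
\end{itemize}
This, not the last path edge, is where radius $3r+1$ is needed.

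Vertices $u$ in big parts but outside $B^{3r}_{G_{i-1}}(v)$ need yet another argument. The $K_{t,t}$ reasoning of \Cref{lem:sparsify} forces such $u$ to be $t$-complete to the witness's part, giving fewer than $dt$ of them. Your closing plan, charging to entry times with $O(d)$ relevant splitting steps, is not justified and does not replace any of this.

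\textbf{Case (a) fails as stated.} Suppose the part of $u$ is $t$-small, flipped with the part of $x$, and $x$ is adjacent in $G$ to all of it. Then that part need not be reachable from $v$ at all, so charging $u$ to it bounds nothing. The paper uses a second $K_{t,t}$ argument here. Fix $t$ vertices $w_1,\dots,w_t$ of the witness's big part $P_W$. Each $w_j$ has $G$-non-neighbours in at most $d$ of the small parts flipped with $P_W$, because such non-neighbours are $G_{i-1}$-neighbours. So among $dt+t$ such parts, $t$ would be complete in $G$ to all $w_j$, giving a $K_{t,t}$. Summing over the at most $d$ big parts meeting $B^{3r-3}_{G_{i-1}}(v)$ gives the $d(d+1)t^3$ term.

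\textbf{Case (b) uses the inclusion in the wrong direction.} From $xu\in E(G_j)\subseteq R_j$ and $R_j\supseteq R_{i-1}$ you cannot conclude $xu\in R_{i-1}$. You have to argue in $G_{i-1}$ directly. There, $x\notin S_{i-1}$ does make a big part of $u$ reachable, but, as explained above, that alone is not enough.
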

Note that \Cref{lem:bounded_smw}
implies \Cref{lem:bounded_scol}, by \cref{lem:scol_smw}.

\begin{proof}
    Let $(\P_1, R_1, G_1), \dots , (\P_n , R_n , G_n)$ be a restrained flip sequence of $G$ of radius-$(3r+1)$ width $d$,
    where $\P_1$ is a partition into one single part, $\P_n$ is a partition into singletons. We can assume without loss of generality by \cref{obs:proper_sequence} that $\left|\P_{i+1}\right| = \left|\P_i\right| + 1$.
    Define \[S_i := \Sparsify(G,\P_i, t), \quad S_{{\le}i} := \bigcup_{j\le i} S_j, \quad \Delta_i := S_i\setminus \bigcup_{j<i} S_{j}.
    \]
    Observe that $S_n = V(G)$ as $\P_n$  is the partition into singletons, which are $t$-small parts.
    Note that $\left|\Delta_i\right| < 2t^2$ for all $i\in[n]$, by \Cref{lem:sparsify_refinement}.

    Pick any order on $V(G)$
    which first places the vertices of $\Delta_1=S_{1}$, followed  by the vertices of $\Delta_2$, etc., followed by the vertices of $\Delta_n$, where within each of those $n$ sets, the vertices are ordered arbitrarily.
    We will show that this order
    witnesses that 
    $\smw_r(G) \leq \O(d^2t^3)$.

    Fix a $\le$-downward closed set $S$ of $V(G)$ with respect to this order,
    and a vertex $v\in V(G)\setminus S$.
    We need to bound $\left|\sep_r(v/S)\right|$.
    Note that $S=S_{{\le}i}\cup S'$ for some $i\in\{0,\ldots,n-1\}$ and $S'\subseteq \Delta_{i+1}$, where $S_{{\le}0}=\emptyset$. Moreover,
     if $u\in \sep_r(v/S)$,
     then either $u\in \Delta_{i+1}$ (and there are less than $2t^2$ vertices in $\Delta_{i+1}$),
     or $u\in S_{{\le}i}$ and,
     because $S_{{\le}i}\subseteq S$,
     then $u\in \sep_r(v/S_{{\le}i})$.
     Hence,
     \begin{equation}\label{eq:mr}
     \left|\sep_r(v/S)\right| < 2t^2+\left|\sep_r(v/S_{{\le}i})\right|.   
     \end{equation}     
      Denote $X\coloneqq \sep_r(v/S_{{\le}i})$; it  therefore remains to bound $\left|X\right|$. If  $i=0$ then $X= S_{{\le}i}=\emptyset$, so assume $i>0$.


    By definition of $X$, each    
    $u \in X$ is adjacent to some  $w\in B^{r - 1}_{G \setminus  S_{{\le}i}}(v)$.
    Call such a vertex $w$ a \emph{witness} of $u$. Note that since $w\notin S_i$, the part of ${\cal P}_i$ containing $w$ is $t$-big. 
    Moreover, as $w \in B^{r-1}_{G\setminus S_{{\le}i}}(v)\subseteq
    B^{r-1}_{G\setminus S_{i}}(v)$, by \Cref{lem:sparsify},
    \begin{equation}\label{eq:close_witnesses}
        w \in B^{3r-3}_{G_i }(v).
    \end{equation}
    Hence, $w$ is in a $t$-big part which intersects $B^{3r-3}_{G_i}(v)$.
    There are at most $d$ such parts,
    by the assumption on the radius-${3r}$ merge-width of $G$.

    We start with bounding $\left|X \setminus B^{3r}_{G_i}(v)\right|$.
    Pick $u\in X\setminus B^{3r}_{G_i}(v)$ and a witness $w$ of $u$.
    Let $P_u, P_w \in \P_i$ be such that $u \in P_u$ and  $w \in P_w$.
    As observed above, the part $P_w$ is $t$-big, and $w \in B^{3r-3}_{G_i }(v)$.    
    As $u \not \in B^{3r}_{G_i}(v)$
    and $u$ is a neighbor of $w$ in $G$, it follows that $P_u$ and $P_w$ must be flipped in $G_i$.

    \begin{claim}
        If the part $P_u$ is $t$-big, then $u$ is $t$-complete to $P_w$.
    \end{claim}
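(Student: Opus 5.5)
The claim will follow the pattern of the proof of \Cref{lem:sparsify}. We are in the setting where $P_u$ and $P_w$ are flipped in $G_i$, $uw\in E(G)$, $w\in B^{3r-3}_{G_i}(v)$ and $u\notin B^{3r}_{G_i}(v)$. We argue by contradiction: assume $P_u$ is $t$-big but $u$ is not $t$-complete to $P_w$. Then $Q':=P_w\setminus N_G(u)$ has at least $t$ elements. Since $P_u$ and $P_w$ are flipped, every $x\in Q'$ with $x\neq u$ is adjacent to $u$ in $G_i$. We will use this to find a short path in $G_i$ from $v$ to $u$ through $w$ and $Q'$, contradicting $\dist_{G_i}(v,u)>3r$.

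On the $w$ side, we consider $P':=P_u\setminus N_G(w)$. Recall that $w\notin S_i=\Sparsify(G,\P_i,t)$, since $w\in B^{r-1}_{G\setminus S_{\le i}}(v)$. The part $P_u$ is $t$-big by assumption, so $w$ is not $t$-complete to $P_u$, and therefore $|P'|\ge t$. Every $y\in P'$ with $y\neq w$ is adjacent to $w$ in $G_i$, by the flip between $P_w$ and $P_u$.

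We now split into the cases from \Cref{lem:sparsify}.
\begin{itemize}
\item If $Q'\cap P'\neq\emptyset$, then $\dist_{G_i}(w,u)\le 2$.
\item If $Q'$ and $P'$ are disjoint, then $G_i$ must contain an edge between them. Otherwise, because of the flip, $Q'P'\subseteq E(G)$, and with $|Q'|,|P'|\ge t$ this gives a $K_{t,t}$ in $G$, a contradiction. Such an edge gives $\dist_{G_i}(w,u)\le 3$.
\end{itemize}
The degenerate cases $u\in Q'$ or $w\in P'$ only make the distance smaller. In either case $\dist_{G_i}(v,u)\le (3r-3)+3=3r$, so $u\in B^{3r}_{G_i}(v)$, contradicting the choice of $u$.

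The main obstacle is a single point: we cannot use \Cref{lem:sparsify} as a black box on the edge $uw$, because $u\in S_{\le i}$ may lie in $\Sparsify$. We must therefore recheck its proof with only one endpoint outside the sparsified set. The hypothesis that $P_u$ is $t$-big supplies the missing bigness on $u$'s side, while $w\notin S_i$ supplies both that $P_w$ is big and that $w$ is not $t$-complete to $P_u$. The negated conclusion supplies the remaining non-completeness of $u$ to $P_w$. With these in hand the argument goes through.
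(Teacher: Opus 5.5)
Your proof is correct and follows essentially the same route as the paper. You define the same two sets $P_w\setminus N_G(u)$ and $P_u\setminus N_G(w)$, use $w\notin S_i$ to make the second one have size at least $t$, and use the overlap/edge/$K_{t,t}$ trichotomy to get $\dist_{G_i}(u,w)\le 3$, contradicting $u\notin B^{3r}_{G_i}(v)$. The only cosmetic difference is that the paper derives disjointness of the two sets before invoking the negated conclusion, whereas you assume the negation up front and split into cases.
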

    \begin{claimproof}
    Let 
    \[
        P_w' := P_w \setminus N_{G}(u)
        \quad
        \text{and}
        \quad
        P_u' := P_u \setminus N_{G}(w).
    \]
    By the same reasoning as in \Cref{clm:dist-to-set}, for all $x \in P'_w$ and $y \in P'_u$
    \begin{equation}\label{eq:short-distance-again}
        \dist_{G_i}(u,x) \leq 1
        \quad\text{and}\quad
        \dist_{G_i}(w,y) \leq 1.
    \end{equation}
    Then $P_u'$ and $P_w'$ are disjoint.
    Indeed, if there was some $x\in P_u'\cap P_w'$, then $u-x-w$ would form a path of length (at most) $2$ in $G_{i}$, by \eqref{eq:short-distance-again}.
    This, together with $w\in B^{3r-3}_{G_i}(v)$ would imply $u \in B^{3r}_{G_i}(v)$, contradicting the assumption.

    As $w \not \in S_i$, we also know that $w$ is not $t$-complete to the $t$-big part $P_u$, thus $\left|P_u'\right| \geq t$.
    Now suppose that $u$ is not $t$-complete to $P_w$, i.e., $\left|P_w'\right| \geq t$.
    If there is no edge between $P_u'$ and $P_w'$ in $G_i$, then $P_u'$ and $P_w'$ yield a $K_{t,t}$ in $G$, a contradiction.
    Similarly, if there is an edge $xy\in E(G_i)$ with $x\in P_u'$ and $y\in P_w'$, then $u-x-y-w$ form a path of length (at most) $3$ in $G_{i}$, by \eqref{eq:short-distance-again}.
    This again contradicts $u \notin B^{3r}_{G_i}(v)$.
    Hence, $u$ is $t$-complete to $P_w$.
\end{claimproof}

    Since there are at most $d$ many $t$-big parts in $\P_i$ that are $(3r-3)$-reachable from $v$ in $G_i$, and, by \Cref{lem:almost_complete}, for each such part $P$ there are less than $t$ vertices that are $t$-complete to $P$.
    We conclude:
    \begin{bound}\label{bound:1}
        There are less than $dt$ vertices of $X \setminus B^{3r}_{G_i}(v)$ that are in $t$-big parts of $\P_i$.
    \end{bound}

    \medskip

    Now consider the vertices of $X \setminus B^{3r}_{G_i}(v)$ that are in $t$-small parts.
    As in the previous case, each such vertex also must have a witness in $B^{3r-3}_{G_i}(v)$ which is in a $t$-big part.
    \begin{claim}
        Let $P_W \in \P_i$ be a $t$-big part intersecting $B^{3r-3}_{G_i}(v)$
    and $\P_U \subseteq \P_i$ be the set of $t$-small parts containing vertices of $X \setminus B^{3r}_{G_i}(v)$ that have a witness in $P_W$.
    Then $\left|\cal P_U\right|<dt+t$.
    \end{claim}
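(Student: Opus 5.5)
The plan is to split $\P_U$ into parts that the merge-width bound controls directly and parts that are forced into a near-biclique with $P_W$, and then bound the latter using $K_{t,t}$-freeness.

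\textbf{Setup.} First I would record the facts available. Let $W_0 \subseteq P_W$ be the set of witnesses, lying in $P_W$, of vertices $u$ with $P_u\in\P_U$. Every such witness lies in $B^{3r-3}_{G_i}(v)$ by \eqref{eq:close_witnesses}. For every $P_u\in\P_U$, the pair $(P_u,P_W)$ is flipped in $G_i$. Indeed, $uw\in E(G)$ for the witness $w$ of $u$, while $u\notin B^{3r}_{G_i}(v)$ and $w\in B^{3r-3}_{G_i}(v)$, so $uw\notin E(G_i)$. Consequently, for every $w\in W_0$ and every $y\in P_u\setminus N_G(w)$ with $y\neq w$, we have $wy\in E(G_i)$. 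Hence $y\in B^{3r-2}_{G_i}(v)$.

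\textbf{Case A: parts reaching the ball.} Call $P_u\in\P_U$ \emph{reachable} if it intersects $B^{3r+1}_{G_i}(v)$. Since $E(G_i)\subseteq R_i$ and $\P_{i+1}$ refines $\P_i$, each part of $\P_i$ reachable within distance $3r+1$ in $(V,R_i)$ contains a part of $\P_{i+1}$ that is also reachable. The radius-$(3r+1)$ width of $(\P_{i+1},R_i)$ is at most $d$, so at most $d$ parts of $\P_i$ are reachable. In particular, fewer than $d+1$ parts of $\P_U$ are reachable.

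\textbf{Case B: unreachable parts.} Let $P_u\in\P_U$ be unreachable. By the setup, every vertex of $P_u$ must be a $G$-neighbour of every $w\in W_0$; otherwise that vertex would be within distance $3r-2$ of $v$. Thus the union $U$ of all unreachable parts of $\P_U$ is complete to $W_0$ in $G$. Moreover $U\cap W_0=\emptyset$, since parts in $\P_U$ are $t$-small while $P_W$ is $t$-big. Hence one of two things happens. Either $|W_0|<t$, or $|U|<t$ by $K_{t,t}$-freeness; in the second case there are fewer than $t$ unreachable parts, and we are done.

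\textbf{Main obstacle: the case $|W_0|<t$.} Here I would fix one witness $w\in W_0$ and bound the unreachable parts whose representatives have witness $w$. For such a part, every vertex is adjacent to $w$ in $G$. I would exploit $u\notin S_i$, which says that $u$ is not $t$-complete to $P_W$. Then $Y_u:=P_W\setminus N_G(u)$ has $|Y_u|\ge t$, and $Y_u\subseteq N_{G_i}(u)$.

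Now take representatives $u_1,\dots,u_m$ from distinct unreachable parts sharing the witness $w$. Consider the family of sets $Y_{u_j}$ together with the common neighbour $w$. If $m\ge t$, either some $t$ of the $Y_{u_j}$ jointly miss at least $t$ vertices of $P_W$, or the $u_j$ together with those vertices would give a $K_{t,t}$. I would try to make this counting yield fewer than $d$ such parts per witness, in the spirit of \Cref{lem:almost_complete} and the preceding claim. Summing over the fewer than $t$ witnesses in $W_0$ would then give the $dt$ term. Together with the fewer than $t$ vertices that are $t$-complete to $P_W$ (\Cref{lem:almost_complete}), and with the $d$ reachable parts absorbed appropriately, this would give $|\P_U|<dt+t$.

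I expect this per-witness step to be the delicate part. If it fails, my fallback is to choose the witness for each $u$ more cleverly, for instance the witness closest to $v$ in $G_i$. The goal of that choice would be to force a vertex of $Y_u$ into $B^{3r-1}_{G_i}(v)$, making $P_u$ reachable and reducing everything to Case A.
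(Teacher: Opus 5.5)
\paragraph{Gap in the case $|W_0|<t$.} Your Case~A is correct, and so is the subcase $|W_0|\ge t$ of Case~B. The remaining case $|W_0|<t$ is not proved, and it genuinely occurs, for instance when all the relevant vertices $u$ share a single witness. The tools you propose for it do not work:
\begin{itemize}
\item The premise ``$u\notin S_i$'' is false. The vertex $u$ lies in a $t$-small part of $\P_i$, so $u\in S_i=\Sparsify(G,\P_i,t)$ by definition; it is the witness $w$, not $u$, that avoids $S_{\le i}$. So $|Y_u|\ge t$ is not justified.
\item Even if you set aside the fewer than $t$ vertices that are $t$-complete to $P_W$, the per-witness counting is only sketched. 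Double counting over $P_W$ does not close it, because $|P_W|$ is unbounded.
\item The fallback cannot succeed. Every $y\in Y_u$ is a $G_i$-neighbour of $u$, since the pair $(P_u,P_W)$ is flipped. So $y\in B^{3r-1}_{G_i}(v)$ would force $u\in B^{3r}_{G_i}(v)$, contradicting the choice of $u$.
\end{itemize}

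\paragraph{The missing idea.} Apply the merge-width bound centred at vertices of $P_W$, not at $v$. You already showed that every part of $\P_U$ is flipped with $P_W$. Hence any $x\in P_W$ is $G_i$-adjacent to each of its $G$-non-neighbours lying in parts of $\P_U$. So $x$ reaches each such part within radius $1\le 3r+1$ in $(V,R_i)$. By the width bound, together with the refinement argument you already gave, each $x\in P_W$ has $G$-non-neighbours in at most $d$ parts of $\P_U$.

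Now take \emph{any} $t$ vertices $x_1,\dots,x_t$ of the $t$-big part $P_W$; they need not be witnesses or lie near $v$. Suppose $|\P_U|\ge dt+t$. Then at least $t$ parts of $\P_U$ are complete in $G$ to $\{x_1,\dots,x_t\}$. Picking one vertex from each such part gives a $K_{t,t}$, since these parts are disjoint from $P_W$. This is the paper's argument, and it makes the reachable/unreachable split and the set $W_0$ unnecessary.
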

    \begin{claimproof}            
    Let $P_W$ and $\P_U$ be as in the statement of the claim.
    For every part $P_u \in \P_U$, there is a vertex $u \in P_u \cap (X \setminus B^{3r}_{G_i}(v))$, and a witness $w \in P_W$ of $u$. Hence, by $u \notin B^{3r}_{G_i}(v)$ and \eqref{eq:close_witnesses}, we must have that every $P_u \in \P_U$ is flipped with $P_W$.

    Since $P_W$ is $t$-big, it contains at least $t$ vertices $w_1, \ldots, w_t$ (which are not necessarily witnesses). For each $j\in [t]$, the vertex $w_j$ has non-neighbors in at most $d$ parts of $\P_U$ in $G$ (because in $G_i$, $w_j$ cannot reach more than $d$ parts by the bound on the merge-width).
    Then, if $\left|\P_U\right| \geq dt + t$, at least $t$ parts of $\P_U$ are complete to $w_1, \ldots, w_t$, yielding a $K_{t, t}$ in~$G$.
    Therefore, $\left|\P_U\right| < dt + t$. 
    \end{claimproof}

    Since there are at most $d$ many $t$-big parts $P_W \in \P_i$ which intersect $ B^{3r - 3}_{G_i}(v)$,
    and for each such $P_W$ there are less than $dt + t$ many $t$-small parts containing some vertex $u \in X \setminus B^{3r}_{G_i}(v)$ with a witness in $P_W$,
    we conclude:
    \begin{bound}\label{bound:2}
        There are less than $d (d+1)t^3$ vertices of $X \setminus B^{3r}_{G_i}(v)$ that are in $t$-small parts of $\P_i$.
    \end{bound}

    \medskip
    It remains to bound $\left|X \cap B^{3r}_{G_i}(v)\right|$.
    For a part $P \in \P_i$, denote
    \[
        U_P \coloneq X \cap B^{3r}_{G_i}(v) \cap P.
    \]
    By the bound on the merge-width, there can be at most $d$ parts $P \in \P_i$, for which $U_P$ is non-empty.
    Thus, we immediately get:
    \begin{bound}\label{bound:3}
        There are less than $dt^2$ vertices of $X \cap B^{3r}_{G_i}(v)$ that are in parts $P \in \P_i$ with $\left|U_P\right| < t^2$.    
    \end{bound}
    
    Now consider only the parts $P$ with $\left|U_P\right| \geq t^2$. These parts are $t$-big,
    so for each $u \in U_P$ the reason for $u$ belonging to $S_{{\le}i}$ must be that $u$ is $t$-complete to some $t$-big part $Q' \in \P_{j}$, where $j \leq i$.
    Let $i_u$ be the largest $j$ such that $u$ is $t$-complete to some $t$-big part $Q' \in \P_{j}$, and call such $Q'$ a \emph{latest reason} for $u$. For every $u\in U_P$ pick a latest reason $Q_u$ for $u$. 
    \begin{claim} \label{cla:reason_distinct_equal}
    	For all $u,w \in U_P$, either $Q_u=Q_w$ or $Q_u \cap Q_w = \emptyset$.
    \end{claim}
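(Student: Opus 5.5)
We use that the partitions are refining: $\P_1 \succ \P_2 \succ \cdots \succ \P_n$. Hence for $j \le j'$, every part of $\P_{j'}$ is contained in a unique part of $\P_j$. The natural approach is to exploit this laminar structure. Let $u,w \in U_P$ with latest reasons $Q_u \in \P_{i_u}$ and $Q_w \in \P_{i_w}$, and assume without loss of generality that $i_u \le i_w$. Then $Q_w$ is contained in a unique part $Q^* \in \P_{i_u}$. So either $Q^* \neq Q_u$, in which case $Q_w \cap Q_u \subseteq Q^* \cap Q_u = \emptyset$ and we are done, or $Q_w \subseteq Q_u$. It therefore remains to handle the case $Q_w \subsetneq Q_u$ (necessarily with $i_u < i_w$) and derive a contradiction, or show that $Q_w = Q_u$.

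For this remaining case, the plan is to show that $u$ is also $t$-complete to the $t$-big part $Q_w \in \P_{i_w}$. This would contradict the maximality of $i_u$, since $i_w > i_u$ would then be a later index at which $u$ is $t$-complete to a $t$-big part. Now $u$ is $t$-complete to $Q_u$, so $|Q_u \setminus N(u)| < t$, and since $Q_w \subseteq Q_u$ we get $|Q_w \setminus N(u)| \le |Q_u \setminus N(u)| < t$. Thus $u$ is $t$-complete to $Q_w$. Moreover $Q_w$ is $t$-big by the choice of $Q_w$ as a reason, and it lies in $\P_{i_w}$ with $i_w > i_u$. This contradicts the definition of $i_u$ as the largest such index. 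Hence whenever the parts are nested they lie at the same level, so $i_u = i_w$ and both are parts of the same partition. Parts of a single partition are either equal or disjoint, which gives the claim.

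The main point to check is that the definition of a latest reason refers to $t$-big parts of any $\P_j$ with $j \le i$. We must verify that $i_w \le i$, so that $Q_w$ is an admissible candidate for $u$. This holds because latest reasons are taken only among indices $j \le i$, so both $i_u, i_w \le i$ and the candidate $(i_w, Q_w)$ is admissible for $u$. With that, the argument is purely order-theoretic. Notably, it never uses that $u$ and $w$ are in the same part $P$, nor the $K_{t,t}$-freeness. Those assumptions are presumably used afterwards, when bounding how many distinct reasons occur and how many vertices share a reason (via \Cref{lem:almost_complete}, fewer than $t$ vertices share any one $Q$).
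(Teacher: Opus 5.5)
Your proof is correct and follows the paper's argument. When $Q_u$ and $Q_w$ meet (with $i_u \le i_w$), refinement gives $Q_w \subseteq Q_u$; $u$ remains $t$-complete to the subset $Q_w$, so maximality of $i_u$ forces $i_u = i_w$ and hence $Q_u = Q_w$. Your explicit check that $i_w \le i$, so that $(i_w, Q_w)$ is an admissible candidate for $u$, is a detail the paper leaves implicit.
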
    
    \begin{claimproof}
    	Suppose $Q_u \cap Q_w  \neq \emptyset$, and assume $Q_u \in \mathcal P_{i_u}$ and $Q_w \in \mathcal P_{i_w}$ with $i_u\leq i_w$. Since $\mathcal P_{i_w}$ is a refinement of $\mathcal P_{i_u}$, it follows that $Q_w \subseteq Q_u$. Note that $u$ is $t$-complete to every subset of $Q_u$, in particular to $Q_w$, hence by maximality of $i_u$, $Q_u = Q_w$.
    \end{claimproof}  
    See \cref{fig:mw_bound3} for an illustration of the proof of following claims.  
    \begin{claim}\label{cla:reason_reachable}
    	For every $u\in U_P$, there is a part $Q \in \P_i$ with $Q\subseteq Q_u$ which is $(3r+1)$-reachable from $v$ in $G_i$.
    \end{claim}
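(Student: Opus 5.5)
The plan is to exhibit a single vertex of $Q_u$ within distance $3r+1$ of $v$ in $G_i$. Then the part $Q\in\P_i$ containing that vertex is $(3r+1)$-reachable from $v$, and $Q\subseteq Q_u$ because $\P_i$ refines $\P_{i_u}$ (recall $i_u\le i$). All vertices of $U_P$ lie in $B^{3r}_{G_i}(v)$ by definition. So it suffices to find some $y\in U_P$ with $y\in Q_u$, or with a $G_i$-neighbour in $Q_u$. If $u\in Q_u$ we are done immediately, so assume $u\notin Q_u$, and more generally assume towards a contradiction that no $y\in U_P$ lies in $Q_u$ or has a $G_i$-neighbour in $Q_u$.

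First split the parts of $\P_i$ contained in $Q_u$ according to whether they are flipped with $P$ in $G_i$. All vertices $y\in U_P$ lie in the same part $P$, so this split is the same for every such $y$. Let $F$ be the union of the flipped parts and $N$ the union of the non-flipped parts, so that $Q_u=F\cup N$.

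For a non-flipped part, $G_i$-adjacency to $y$ coincides with $G$-adjacency. Hence, under our assumption, $u$ has no $G$-neighbour in $N$. Since $u$ is $t$-complete to $Q_u$, this gives $|N|<t$. For a flipped part, any $G$-non-neighbour of $y$ in it, other than $y$ itself, would be a $G_i$-neighbour of $y$. As $y\notin Q_u$, the assumption therefore forces every $y\in U_P$ to be $G$-complete to $F$.

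Now count. $Q_u$ is $t$-big, so $|F|\ge t^2-(t-1)\ge t$ (the case $t=1$ is trivial, since every graph containing an edge contains $K_{1,1}$). Also $|U_P|\ge t^2\ge t$, and $U_P\cap F=\emptyset$. Choosing $t$ vertices of $U_P$ and $t$ vertices of $F$ then yields a $K_{t,t}$ in $G$, a contradiction.

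The main obstacle is realising that one should not work with $u$ alone. For $u$ by itself, all its $G$-neighbours in $Q_u$ may sit in parts flipped with $P$, in which case $u$ has no $G_i$-neighbour in $Q_u$ at all. The argument must instead exploit that the whole set $U_P$ lies in one part $P$ within radius $3r$, and then use $K_{t,t}$-freeness against the large set $F$. The one point to double-check is that $U_P$ and $F$ are disjoint; this is exactly why the case where some $y\in U_P$ lies in $Q_u$ is treated separately and dispatched directly.
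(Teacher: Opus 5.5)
Your proof is correct, but it is organised differently from the paper's.

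The paper argues directly. Since $Q_u$ is $t$-big, \Cref{lem:almost_complete} yields some $w\in U_P$ with at least $t$ non-neighbours in $Q_u$. Comparing with the fewer than $t$ non-neighbours of $u$ gives $x\in Q_u$ with $ux\in E(G)$ and $wx\notin E(G)$. As $u$ and $w$ lie in the same part $P$, the vertex $x$ is a $G_i$-neighbour of one of them, whatever the flip status of $P$ with the part of $x$.

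You instead argue by contradiction and split $Q_u$ by flip status relative to $P$. You use the $t$-completeness of $u$ to make the non-flipped portion $N$ small. You then build a $K_{t,t}$ between $U_P$ and the flipped portion $F$ by hand. In effect this inlines the counting of \Cref{lem:almost_complete}, applied to $F$ instead of $Q_u$.

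The paper's version is shorter and never inspects the flip pattern inside $Q_u$. Two vertices of one part that disagree in $G$ on a third vertex $x$ cannot both be non-adjacent to $x$ in any $\P_i$-flip.

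Your version is more careful at one degenerate point. In the paper's argument one may be forced to take $x=w$, namely when $w\in Q_u$ and $uw\in E(G)$. If $P$ is flipped with itself, the stated dichotomy ``$wx\in E(G_i)$ or $ux\in E(G_i)$'' then fails literally. The conclusion still holds, since $w\in B^{3r}_{G_i}(v)\cap Q_u$. Your separate treatment of the case $U_P\cap Q_u\neq\emptyset$ sidesteps this.

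Finally, your parenthetical about $t=1$ is unnecessary, since $t^2-(t-1)\ge t$ holds for every $t$.
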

     \begin{claimproof}
     	By \cref{lem:almost_complete},  less than $t$ vertices are $t$-complete to $Q_u$, so as $|U_P| \geq t^2$, there exists a vertex $w \in U_P$ with at least $t$ non-neighbors in $Q_u$.
     	On the other hand, $u$ has less than $t$ non-neighbors in $Q_u$. So there exists a vertex $x \in Q_u$ such that $wx \not \in E(G)$ and $ux \in E(G)$.
        Since $G_i$ is a $\mathcal P_i$-flip of $G$ and $u,w$ are in the same part $P$ of $\mathcal P_i$,
        it follows that  either $wx \in E(G_i)$ or $ux \in E(G_i)$. Since  $u,w\in B^{3r}_{G_i}(v)$, it follows that $x$ is $(3r+1)$-reachable from $v$ in $G_i$.
     \end{claimproof}

     \begin{claim}
        $|U_P| < dt$.
     \end{claim}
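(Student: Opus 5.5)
The plan is to count the vertices of $U_P$ by their latest reasons. We are in the case $|U_P|\ge t^2$, so every $u\in U_P$ has a chosen latest reason $Q_u$. By definition $Q_u$ is a $t$-big part of some $\P_{i_u}$ with $i_u\le i$, and $u$ is $t$-complete to it. The estimate will be
\[
|U_P| \;\le\; \bigl|\{Q_u : u\in U_P\}\bigr| \cdot \max_{u\in U_P}\bigl|\{x : x \text{ is $t$-complete to } Q_u\}\bigr|.
\]

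First, I bound the number of distinct reasons $\mathcal Q\coloneqq\{Q_u : u\in U_P\}$ by $d$. By \Cref{cla:reason_distinct_equal}, any two members of $\mathcal Q$ are equal or disjoint. By \Cref{cla:reason_reachable}, each $Q\in\mathcal Q$ contains a part $\widehat Q\in\P_i$ with $\widehat Q\subseteq Q$ that is $(3r+1)$-reachable from $v$ in $G_i$. Distinct members of $\mathcal Q$ are disjoint, so the map $Q\mapsto\widehat Q$ is injective. Hence $|\mathcal Q|$ is at most the number of parts of $\P_i$ that are $(3r+1)$-reachable from $v$ in $G_i$.

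That number is at most $d$, and I would spell out why. Since $E(G_i)\subseteq R_i$, a path of length at most $3r+1$ in $G_i$ is also such a path in $(V,R_i)$. Since $\P_{i+1}$ refines $\P_i$, each reachable part of $\P_i$ contains a reachable part of $\P_{i+1}$, and distinct parts of $\P_i$ contain distinct parts of $\P_{i+1}$. So the count is at most the radius-$(3r+1)$ width of $(\P_{i+1},R_i)$, which is at most $d$. The edge case $i=n$ does not occur, since $i\le n-1$.

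Second, each $Q\in\mathcal Q$ is $t$-big. By \Cref{lem:almost_complete}, fewer than $t$ vertices of $G$ are $t$-complete to $Q$, so at most $t-1$ vertices $u\in U_P$ have $Q_u=Q$. Combining the two steps gives $|U_P|\le d(t-1)<dt$. The only step needing care is the first, namely the injectivity of $Q\mapsto\widehat Q$ and the transfer of the merge-width bound from $\P_{i+1}$ down to $\P_i$. The substantive work there was already done in \Cref{cla:reason_distinct_equal} and \Cref{cla:reason_reachable}.
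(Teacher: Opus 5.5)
Your proof is correct and follows the paper's argument: you bound the number of distinct latest reasons by $d$ using \Cref{cla:reason_distinct_equal} and \Cref{cla:reason_reachable}, and then charge fewer than $t$ vertices of $U_P$ to each reason via \Cref{lem:almost_complete}. You also spell out how the width bound on $(\P_{i+1},R_i)$ yields at most $d$ parts of $\P_i$ reachable from $v$ in $G_i$, which the paper uses without comment, and that step is handled correctly.
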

     \begin{claimproof}
        Consider the set $\cal Q:=\{Q_u : u \in U_P \}$ containing the latest reason of each vertex in $U_P$.
        By \Cref{lem:almost_complete}, each part in $\cal Q$ is the latest reason for fewer than $t$ vertices. Hence, it suffices to prove $|\cal Q|\leq d$.
        Assume towards a contradiction that $\cal Q$ contains distinct parts $Q_1, \ldots, Q_{d+1}$.
        By \Cref{cla:reason_reachable}, there are parts 
        \[
            Q'_1\subseteq Q_1, \ldots, Q'_{d+1} \subseteq Q_{d+1}
        \]
        from the partition $\P_i$ that are all reachable from $v$.
        Since $Q_1, \ldots, Q_{d+1}$ are distinct, they must be pairwise disjoint by \Cref{cla:reason_distinct_equal}.
        Thus, $Q'_1, \ldots, Q'_{d+1}$ are distinct parts of $\P_i$.
        As they are all reachable from~$v$, we get a contradiction to the merge-width bound.
     \end{claimproof}
    
    By the bound on the merge-width, there can be at most $d$ parts $P \in \P_i$, for which $U_P$ is non-empty. Thus, by applying the above claim, we conclude:

    \begin{bound}\label{bound:4}
        There are less than $d^2t$ vertices of $X \cap B^{3r}_{G_i}(v)$ that are in parts $P \in \P_i$ with $\left|U_P\right| \geq t^2$.    
    \end{bound}

    Adding up \Cref{bound:1,bound:2,bound:3,bound:4}, we can exhaustively bound
    $$\left|\sep_{r}(v/S_{{\le}i})\right|=\left|X\right| \le \O(d^2t^3).$$
    Together with \eqref{eq:mr},
    this proves $\left|\sep_{r}(v/S)\right|\le \O(d^2t^3)$ for any set $S$ with which is downward closed with respect to the considered order and $v\in V(G)\setminus S$,
    and thus $\smw_r(G)\le\O(d^2t^3)$.
\end{proof}

\begin{figure}[]
	\centering
	\includegraphics[scale = 1.6]{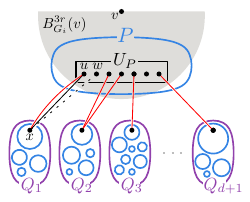}
	\caption{Illustration of the setting of \cref{bound:3} in  \cref{lem:bounded_smw}. Edges in black are from the original graph $G$, edges in \textcolor{red}{red} are from the $\P_i$-flip $G_i$. The partition \textcolor[rgb]{0.208, 0.518, 0.894}{$\P_i$} is in \textcolor[rgb]{0.208, 0.518, 0.894}{blue}, the set of parts \textcolor[rgb]{0.569, 0.255, 0.675}{$\Q$} is in \textcolor[rgb]{0.569, 0.255, 0.675}{purple}. Here $Q_u = Q_1$.}
	\label{fig:mw_bound3}
\end{figure}


\printbibliography

\appendix
\section{Flip-separability characterization of clique-width}

The following lemma is folklore. We provide a quick proof sketch for completeness.

\begin{lemma}\label{lem:infty-separability}
    A graph class $\CC$ has bounded clique-width if and only if it is $\infty$-flip-separable.
\end{lemma}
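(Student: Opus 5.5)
We prove the two directions separately, going through rank-width, which is functionally equivalent to clique-width. We use the standard fact that a graph has bounded rank-width if and only if it has a tree decomposition of bounded width with respect to the cut-rank function.

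For the forward direction, assume $\CC$ has clique-width at most $c$. Then every $G\in\CC$ has a rank decomposition of width at most $c' = c'(c)$. This is a subcubic tree $T$ whose leaves are the vertices of $G$, and every edge $e$ of $T$ induces a bipartition $(X_e, Y_e)$ of $V(G)$ with $\operatorname{rank}_{\mathbb F_2}$ of the biadjacency matrix at most $c'$. Given $W\subseteq V(G)$, we pick an edge $e$ giving a balanced cut of $W$ by the usual centroid argument on a subcubic tree: each side contains at most $\lceil\frac23|W|\rceil$ vertices of $W$.

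Next, since the cut $(X_e,Y_e)$ has bounded rank, the vertices on each side fall into at most $2^{c'}$ neighborhood classes with respect to the other side. We flip, for each pair of classes (one on each side) that is complete to each other, the pair $(A,B)$. This is a $2^{c'+1}$-flip in which $X_e$ and $Y_e$ have no edges between them, so every $\infty$-ball is contained in one side.

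To obtain the bound $\frac12$ instead of $\frac23$, we iterate. We refine inside the heavier side using a second edge, so that all resulting components of $T$ minus at most two edges contain at most $\lceil|W|/2\rceil$ vertices of $W$. Alternatively, we take a separation at a node of degree $3$ of $T$, where each of the three branches contains at most half of $W$. This is the standard weighted centroid: there is a node all of whose branches have at most $|W|/2$ vertices of $W$. We then separate all three cuts simultaneously with the common refinement of the class partitions, which is still a bounded flip.

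For the backward direction, assume $\CC$ is $\infty$-flip-separable with budget $k$. We show that the cut-rank (or clique-width) is bounded via a balanced-separator argument, in the style of the classical proof that bounded balanced separators imply bounded tree-width, here with the connectivity function $\rho(X)$ equal to the cut-rank. After a $k$-flip $H$ separating $W$, the components of $H$ split $W$ into groups each of size at most $\lceil|W|/2\rceil$. We group these components into two sides $X,Y$ with $|W\cap X|,|W\cap Y|\ge |W|/3$ roughly. Since there are no $H$-edges between $X$ and $Y$, the $G$-adjacency between $X$ and $Y$ is determined by the $k$ parts of the flip. Hence $\rho_G(X)\le k$.

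So every set $W$ admits a bipartition of $V(G)$ of cut-rank at most $k$ that is balanced on $W$. A standard argument (Oum–Seymour style, analogous to Robertson–Seymour's tangle/well-linkedness duality) then shows that rank-width is bounded by $O(k)$. We apply the separability to sets $W$ that would be well-linked of order $3k$, which yields a contradiction with the duality theorem for branch-width of symmetric submodular functions.

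The main obstacle is this last step: invoking the correct duality/well-linkedness theorem for cut-rank, and checking that balanced separations of all sets $W$ (rather than only for a specific well-linked set) suffice. Since the statement is folklore, we only sketch it. We also need to handle the rounding $\lceil\cdot\rceil$ and the regrouping of components into two roughly balanced sides, which loses only constant factors.
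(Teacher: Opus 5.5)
Your proposal is correct in outline, and it differs from the paper mainly in the backward direction.

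\textbf{Forward direction.} This is the route the paper mentions first: rank decompositions. You supply the details the paper omits. The node-centroid gives at most three branches, each carrying at most $|W|/2$ vertices of $W$. The common refinement of the three cut partitions then gives a bounded flip with no edges between different branches. The paper's alternative is to use $\infty$-flip-breakability with margin $4m$ as a black box and break the largest unbroken part three times.

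\textbf{Backward direction.} The paper groups the components into two sides each holding at least $|W|/4$ of $W$. This yields $\infty$-flip-breakability, and the paper then cites the theorem that $\infty$-flip-breakability characterizes bounded clique-width. You group components the same way, but read the result as a cut of cut-rank at most $k$. That reading is correct: there are no $H$-edges across, so each row of the cut matrix is determined by the flip part of its vertex. You then aim for bounded rank-width directly. This route avoids the flip-breakability theorem and gives an explicit bound. The price is a submodularity argument, which you leave as the ``main obstacle''.

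\textbf{The flagged step.} It does go through, but not via tangle duality, since a tangle does not directly give a cardinality-well-linked vertex set. Balancing the natural interface set $Z\subseteq Y$ does not work either, because $|Z\cap X|$ does not control the rank from $Y\cap X$ to $\overline{Y}$. What works is an Oum--Seymour recursion that balances representatives \emph{outside} the current piece.
\begin{itemize}
\item Maintain a set $Y$ with $\rho(Y)\le 3k+1$. If $\rho(Y)\le 3k$, split off one vertex; this is fine since $\rho(Y\setminus\{v\})\le\rho(Y)+1$.
\item If $\rho(Y)=3k+1$, pick $Z\subseteq Y$ and $Z'\subseteq\overline{Y}$ such that $M[Z,Z']$ is a nonsingular $(3k+1)\times(3k+1)$ submatrix of the adjacency matrix over $\mathbb{F}_2$.
\item Apply your property to $W=Z'$. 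Your grouping of components, with $|W|=3k+1$, gives $X$ with $\rho(X)\le k$ and $k+1\le|Z'\cap X|,|Z'\setminus X|\le 2k$.
\item Since $\rho(Y\cup X)\ge\operatorname{rank}M[Z,Z'\setminus X]=|Z'\setminus X|$, submodularity gives
\[\rho(Y\cap X)\le\rho(Y)+\rho(X)-\rho(Y\cup X)\le\rho(X)+|Z'\cap X|\le 3k,\]
and symmetrically $\rho(Y\setminus X)\le 3k$.
\item Both pieces are nonempty. For instance, $Y\cap X=\emptyset$ would force $\rho(X)\ge\operatorname{rank}M[Z'\cap X,Z]=|Z'\cap X|\ge k+1$, a contradiction.
\end{itemize}
This yields a rank decomposition of width at most $3k+1$. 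You only need balanced cuts for sets of size $3k+1$, so your worry about needing them ``for all $W$'' is moot: your hypothesis is stronger than required.
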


\begin{proof}[Proof sketch]
    One can show that every class of bounded clique-width is $\infty$-flip-separable by working with the rank-width characterization of clique-width, analogously to the proof of $\infty$-flip-breakability in the full version of \cite{flip-breakability}.
    There, the authors prove that every class of bounded clique-width is $\infty$-flip-breakable with margin function $M(m) = 4m$.
    One can also use this result as a black box and recursively break the largest \emph{unbroken} (i.e., contained in the same connected component) subset of $W$. After $3$ iterations, the largest unbroken subset will have size $\lceil (3/4)^3 \cdot |W| \rceil =  \lceil \frac{27}{64} \cdot |W| \rceil \leq \lceil 1/2 \cdot |W| \rceil$ and we are done.

    To show that $\infty$-flip-separability implies $\infty$-flip-breakability (and therefore bounded clique-width), notice that once we separated a set $W$, we can partition it into sets $A$ and $B$ with $|A| \geq |B| \geq |W|/4$ such that every vertex from $A$ is in a different connected component from every vertex in $B$:
    Order the connected components $C_1,\ldots, C_k$ of the flipped graph by how many vertices from $W$ they contain. There will be a prefix of the order that contains at least $1/4$ and at most $3/4$ of the vertices from $W$.
\end{proof}

\end{document}